\documentclass[11pt]{article}
\usepackage{subcaption}
\usepackage{booktabs}
\usepackage{definitions}

\begin{document}

\title{Regularized Online Allocation Problems: Fairness and Beyond}

% \author{Authors' names blinded for peer review}

\author{Santiago R. Balseiro\thanks{Columbia Business School and Google Research.}
%(\href{mailto:rfreund@mit.edu}{rfreund@mit.edu}).  This author's research is supported by AFOSR Grant No. FA9550-11-1-0141 and the MIT-Chile-Pontificia Universidad
\and
Haihao Lu\thanks{University of Chicago Booth School of Business. Part of the work was done at Google.}
%(\href{mailto:pgrigas@mit.edu}{pgrigas@mit.edu}).  This author's research has been partially supported through an NSF Graduate Research Fellowship and the
\and
Vahab Mirrokni \thanks{Google Research.}}
%(\href{mailto:pgrigas@mit.edu}{pgrigas@mit.edu}).  This author's research has been partially supported through an NSF Graduate Research Fellowship and the
%\date{} % Activate to display a given date or no date (if empty),
%          % otherwise the current date is printed

\date{October 27, 2021}
\maketitle

\begin{abstract}
% \HL{Emphasis more on the new contributions.}
Online allocation problems with resource constraints have a rich history in operations research. In this paper, we introduce the \emph{regularized online allocation problem}, a variant that includes a non-linear regularizer acting on the total resource consumption. In this problem, requests repeatedly arrive over time and, for each request, a decision maker needs to take an action that generates a reward and consumes resources. The objective is to simultaneously maximize additively separable rewards and the value of a non-separable regularizer subject to the resource constraints. Our primary motivation is allowing decision makers to trade off separable objectives such as the economic efficiency of an allocation with ancillary, non-separable objectives such as the fairness or equity of an allocation. We design an algorithm that is simple, fast, and attains good performance with both stochastic i.i.d.~and adversarial inputs. In particular, our algorithm is asymptotically optimal under stochastic i.i.d. input models and attains a fixed competitive ratio that depends on the regularizer when the input is adversarial. Furthermore, the algorithm and analysis do not require convexity or concavity of the reward function and the consumption function, which allows more model flexibility. Numerical experiments confirm the effectiveness of the proposed algorithm and of regularization in an internet advertising application.
% the online allocation of internet advertisements wherein firms seek to maximize additive objectives such as the revenue or efficiency of the allocation. By introducing a regularizer, firms can account for the fairness of the allocation or, alternatively, punish under-delivery of advertisements---two common desiderata in internet advertising markets. %arrivals are drawn independently from a distribution that is unknown to the decision maker. Our algorithm is simple, fast, and attains \black{the optimal order of} sub-linear regret compared to the optimal allocation with the benefit of hindsight. 

\end{abstract}

\vspace{0.2cm}
\setstretch{1.5}
\section{Introduction}

Online allocation problems with resource constraints have abundant real-world applications and, as such, have been extensively studied in computer science and operations research. Prominent applications can be found in internet advertising~\citep{Feldman2010}, cloud computing~\citep{badanidiyuru2018bwk}, {allocation of food donations to food banks~\citep{lien2014sequential}, rationing of a social goods during a pandemic~\citep{mansahdi2021fair}, ride-sharing~\citep{nanda2020balancing}, workforce hiring~\citep{dickerson2019balancing}, inventory pooling~\citep{jiang2019achieving} among others.} 

The literature on online allocation problems focuses mostly on optimizing additively separable objectives such as the total click-throughout rate, revenue, or efficiency of the allocation. In many settings, however, decision makers are also concerned about ancillary objectives such as fairness across agents, group-level fairness, avoiding under-delivery of resources, balancing the load across servers, or avoiding saturating resources. These metrics are, unfortunately, non-separable and cannot be readily accommodated by existing algorithms that are tailored for additively separable objectives. Thus motivated, in this paper, we introduce the \emph{regularized online allocation problem}, a variant that includes a non-linear regularized acting on the total resource consumption. The introduction of a regularizer allows the decision maker to simultaneously maximize an additively separable objective together with other metrics such as  fairness and load balancing that are non-linear in nature.

More formally, we consider a finite horizon model in which requests arrive repeatedly over time. The decision maker is endowed with a fixed amount of resources that cannot be replenished. Each arriving request is presented with a non-linear reward function and a consumption function. After observing the request, the decision makers need to take an action that generates a reward and consumes resources. The objective of the decision maker is to maximize the sum of the cumulative reward and a regularizer that acts on the total resource consumption. (Our model can easily accommodate a regularizer that acts on other metrics such as, say, the cumulative rewards by adding dummy resources.)

Motivated by practical applications, we consider an incomplete information model in which requests are generated from an input model that is \emph{unknown to the decision maker}. That is, when a request arrives, the decision maker observes the reward function and consumption function of the request before taking an action, but does not get to observe the reward functions and consumption matrices of future requests until their arrival. The objective of this paper is to design simple algorithms that attain good performance relative to the best allocation when all requests are known in advance. %In particular, our goal is to design algorithms that attain good performance on various inputs model while being \emph{oblivious to the input model}, i.e., without knowing in advance which input model their is facing.  

\subsection{Main Contributions}

The main contributions of the paper can be summarized as follows:
\begin{enumerate}
    \item We introduce the \emph{regularized online allocation problem} and discuss various practically relevant economic examples (Section~\ref{sec:formulation}). Our proposed model allows the decision maker to simultaneously maximize an additively separable objective together with an non-separable metric, which we refer to as the \emph{regularizer}. Furthermore, we do not assume the reward functions nor the consumption functions to be convex/concave---a departure from related work.
    
    \item We derive a dual problem for the regularized online allocation problem, and study its geometric structure as well as its economic implications (Section~\ref{sec:dual-problem} and Section~\ref{sec:algorithm-examples}). Due to the existence of a regularizer, the dual variables (i.e., the opportunity cost of resources) no longer necessarily lie in the positive orthant and can potentially be negative, unlike traditional online allocation problems.
    \item We propose a dual mirror descent algorithm for solving regularized online allocation problems that is simple and efficient, can better capture the geometry of the dual feasible region, and achieves good performances for various input models:
\begin{itemize}
    \item Our algorithm is efficient and simple to implement. In many cases, the update rule can be implemented in linear time and there is no need to solve auxiliary  optimization problems on historical data as in other methods in the literature.
    \item The dual feasible region can become complicated for regularized problems. By suitably picking the reference function in our mirror descent algorithm, we can adjust the update rule to better capture the geometry of the dual feasible set and obtain more tractable projection problems.
    \item When the input is stochastic and requests are drawn are independent and identically drawn from a distribution that is unknown to the decision maker, our algorithm attains a regret of the order $O(T^{1/2})$ relative to the optimal allocation in hindsight where $T$ is the number of requests (Section~\ref{sec:iid-model}). This rate is unimprovable under our minimal assumptions on the input. On the other extreme, when requests are adversarially chosen, no algorithm can attain vanishing regret, but our algorithm is shown to attain a fixed competitive ratio that depends on the structure of the regularizer, i.e., it guarantees a fixed fraction of the optimal allocation in hindsight (Section~\ref{sec:adversarial}). %Our competitive ratios are tight in the case of no regularization.
    
    \item We numerically evaluate our algorithm on an internet advertising application using a max-min fairness regularizer. Our experiments confirm that our proposed algorithm attains $O(T^{1/2})$ regret under stochastic inputs  as suggested by our theory. { By varying the weight of the regularizer, we can trace the Pareto efficient frontier between click-through rates and fairness. An important managerial takeaway is that fairness can be significantly improved while only reducing click-through rates by a small amount.}
\end{itemize}
\end{enumerate}

Our model is general and applies to online allocation problems across many sectors. We briefly discuss some salient applications. In display advertising, a publisher typically signs contracts with many advertisers agreeing to deliver a fixed number of impressions within a limited time horizon. Impressions arrive sequentially over time and the publisher needs to assign, in real time, each impression to one advertiser so as to maximize metrics such as the cumulative click-through rate or the number of conversions while satisfying contractual agreements on the number of impressions to be delivered~\citep{Feldman2010}. Focusing on maximizing clicks can lead to imbalanced outcomes in which advertisers with low click-through rates are not displayed or to showing ads to users who are most likely to click without any equity considerations~\citep{cainmiller2015}. { We propose mitigating these issues using a fairness regularizer. Additionally, by incorporating a regularizer, publishers can punish under-delivery of advertisements---a common desiderata in internet advertising markets. Another related advertising application is bidding in repeated auctions with budgets constraints, where the regularizer can penalize the gap between the true spending and the desired spending~\citep{grigas2021optimal} or model advertisers' preferences to reach a certain distribution of impressions over a target population of users~\citep{celli2021parity}.}

In cloud computing, jobs arriving online need to be scheduled to one of many servers. Each job consumes resources from the server, which need to be shared with other jobs. The scheduler needs to assign jobs to servers to maximize metrics such as the cumulative revenue or efficiency of the allocation~\citep{xu2013dynamic}. When jobs' processing times are long compared to their arrival rates, this scheduling problem can be cast as an online allocation problem~\citep{badanidiyuru2018bwk}. { Regularization allows to balance the load across resources to avoid saturating them, to incorporate returns to scale in resource utilization,} or to guarantee fairness across users~\citep{bateni2018fair}.
    
%In ride-sharing markets, riders who arrive online are matched to drivers by platforms to optimize objectives like the waiting time of riders or the revenue of the platform~\citep{banerjee2017pricing}. Because optimizing the latter objectives too aggressively can lead to high cancellation rates from drivers, platforms establish a cancellation budget for each driver to guarantee that the number of trips rejected is limited. At the same time, platforms needs to guarantee that assignments are fair across different rider demographics~\citep{nanda2020balancing}, which naturally leads to a group-level fairness regularizer.

{
Finally, in humanitarian logistics, limited supplies need to be distributed to agents repeatedly over time. Maximizing the economic efficiency of the allocation can lead to unfair outcomes in which some agents or groups of agents receive lower relative amount of supplies. For example, during the Covid-19 pandemic, supply-demand imbalance leading to the rationing of medical supplies and allocating resources fairly was a key consideration for decision makers~\citep{emanuel2020fair,mansahdi2021fair}. In these situations, incorporating a fairness regularizer that maximizes the minimum utility across agents (or groups of agents) can lead to more egalitarian outcomes while slightly sacrificing economic efficiency~\citep{bateni2018fair}.
}

\subsection{Related Work}

\begin{table*}[]
\footnotesize
    \centering
    \def\arraystretch{1.2}
    \begin{tabular}{p{6cm}x{2.25cm}x{1.8cm}x{1.75cm}c}
        \toprule
         &  & \bf Resource & \bf Input & \\
         \bf Papers & \bf Objective & \bf constraints & \bf model & \bf Results \\ \midrule
         \citet{Mehta2007JACM, buchbinder2007primaldual, Feldman2009, ball2009toward, ma2021algorithms}&Separable&  Hard & Adversarial & Fixed comp. ratio\\\hline
         \citet{DevanurHayes2009, Feldman2010, Agrawal2014OR, Devanur2019near, Jasin2015unknown, LiYe2019online, li2020simple}&Separable & Hard & Stochastic & Sublinear  regret \\\hline
         \multirow[t]{2}{*}{\citet{mirrokni2012simultaneous, balseiro2020best}}&Separable & Hard & Stochastic/ & Sublinear  regret/\\
         &   & & Adversarial & Fixed comp. ratio \\ \hline         %\citet{mirrokni2012simultaneous}&Linear & Hard & Stochastic/ & Sublinear  regret/\\
         %&   & & Adversarial & Fixed comp. ratio \\ \hline
         %\citet{balseiro2020best}&Nonlinear, & Hard & Stochastic/ & Sublinear  regret/\\
         %&  separable & & Adversarial & Fixed comp. ratio \\ \hline
         \citet{jenatton2016online,AgrawalDevanue2015fast,cheung2020online}&Non-separable & Soft & Stochastic & Sublinear  regret \\\hline
         \citet{devanur2012concave,eghbali2016designing}&Non-separable & Soft & Adversarial & Fixed comp. ratio \\\hline
         \citet{tan2020mechanism} & Non-separable & Hard & Adversarial & Fixed comp. ratio\\\hline
         This paper & Non-separable & Hard & Stochastic/ & Sublinear  regret/ \\
         & & & Adversarial & Fixed comp. ratio \\         \bottomrule
    \end{tabular}
    \caption{Comparison of our work with the existing literature on online allocation problems.}
    \label{tab:literature}
\end{table*}

Online allocation problems have been extensively studied in computer science and operations research literature. Table~\ref{tab:literature} summarizes the differences between our work and the existing literature on online allocation problems.

% obtaining a constant competitive ratio for (weighted) online matching and AdWords problem, both of which impose special structures on the consumption function and/or the revenue function (in particular the reward and consumption may be dependent)~\citep{Mehta2007JACM, Feldman2009}. 
% There is a stream of literature that studies online allocation problems under adversarial input models, i.e., when the incoming requests are adversarially chosen

% stochastic models when the incoming requests are drawn i.i.d.~from an unknown distribution. 

\textbf{Stochastic Input.} Most work on the traditional online allocation problem with stochastic input models focus on time-separable and linear reward functions, i.e., the case when the reward function is a summation over all time periods and is linear in the decision variable~\citep{DevanurHayes2009, Feldman2010, Agrawal2014OR, Jasin2015unknown, Devanur2019near, LiYe2019online}. The algorithms in these works usually require resolving large linear problems periodically using all samples collected so far to update the dual variables or to estimate the distribution of requests. More recently, \citet{balseiro2020best} studied a simple dual mirror descent algorithm for online allocation problems with separable, non-linear reward functions, which attains $O(T^{1/2})$ regret and has no need of solving any large program using all previous samples. Simultaneously, ~\citet{li2020simple} presented a similar subgradient algorithm that attains $O(T^{1/2})$ regret for linear reward. %Later on, \citet{balseiro2020best} extends the analysis of \citet{lu2020dual} for general reward and consumption functions with $O(T^{1/2})$ regret, which do not require to be convex or concave and have applications in online assortment problems and jointly optimizing allocation and pricing. 
Our algorithm is similar to theirs in that it does not require to solve large problems using all data collected so far. % and does not require the reward and consumption to be concave/concave. 

Compared to the above works, we introduce a regularizer on top of the traditional online allocation problem. The regularizer can be viewed as a reward that is not separable over time. The existence of such non-separable regularizer makes the theoretical analysis harder compared to traditional dual-based algorithms for problems without regularizers because dual variables, which correspond to the opportunity cost that a unit of resource is consumed, can be negative due to the existence of the regularizer. %This precludes the stopping time analysis introduced in~\citet{balseiro2020best} from applying to our setting. 
We present a in-depth characterization on the geometry of the dual problem (see Section \ref{sec:characterization_dual}), which allows us to get around this issue in the analysis.

Another related work on stochastic input models is~\citet{AgrawalDevanue2015fast}, where the focus is to solve general online stochastic convex program that allows general concave objectives and convex constraints. When the value of the benchmark is known, they present fast algorithms; otherwise, their algorithms require periodically solving large convex optimization problems with the data collected so far to obtain a good estimate of the benchmark. In principle, our regularized online allocation problem (see Section \ref{sec:formulation} for details) can be reformulated as an instance of the online stochastic convex program presented in~\citet{AgrawalDevanue2015fast}. Such reformulation makes the algorithms proposed in~\citet{AgrawalDevanue2015fast} more complex than ours as they require keeping track of additional dual variables and solving convex optimization programs on historical data (unless the optimal value of the objective is known). Moreover, their algorithms treat resource constraints as soft, i.e., they allow the constraints to be violated and then prove that constrains are violated in expectation, by an amount sublinear in the number of time periods $T$. Instead, in our setting, resource constraints are hard and cannot be violated, which is a fundamental requirement in many applications. Additionally, our proposed algorithm is simple, fast, and does not require estimates of the value of the benchmark nor solving large convex optimization problems. Along similar lines \citet{jenatton2016online} studies a problem with soft resource constraints in which violations of the constraints are penalized using a non-separable regularizer. Thus, different to our paper, they allow the resource constraints to be violated. They provide a similar algorithm to ours, but their regret bounds depend on the optimal solution in hindsight, which, in general, it is hard to control. Finally, \citet{cheung2020online} design similar dual-based online algorithms for non-separable objectives with soft resource constraints when offline samples from the distribution of requests are available.

{
\textbf{Adversarial Inputs.}
Most of the previous works on online allocation problems with adversarial inputs focus on separable objectives. There is a stream of work studying the \emph{AdWords problem}, the case when the reward and the consumption  are both linear in decisions and reward is proportional to consumption~\citep{Mehta2007JACM, buchbinder2007primaldual, Feldman2009,mirrokni2012simultaneous}, revenue management problems~\citep{ball2009toward, ma2021algorithms}, or more general separable objectives with non-linear reward and consumption functions~\citep{balseiro2020best}. %More recently, \cite{balseiro2020best} shows that for general online allocation problems where the reward and consumption can be independent and do not require to be linear or even concave/convex. Our setting is similar to \cite{balseiro2020best}, and we consider general online allocation problems with nonlinear and independent reward and consumption. 
Compared to the above work, we include a non-separable regularizer. Similar to the stochastic setting, the existence of such non-separable regularizer makes the competitive ratio analysis more challenging than the one without the regularizer. As opposed to the stochastic setting in which vanishing regret can be achieved for most concave regularizers, in the adversarial case the competitive ratio heavily depends on the structure of the regularizer. In particular, \cite{mirrokni2012simultaneous} shows that no online algorithm can obtain both vanishing regret under stochastic input and can attain a data-independent constant competitive ratio for adversarial input for the AdWords problem. While AdWords problem is a special case of our setting, our results do not contradict their findings because our competitive ratio depends on the data, namely, the consumption-budget ratio. %Furthermore, \cite{esfandiari2015online} presents the first online stochastic allocation algorithm that applies to mixed adversarial and stochastic settings and  whose approx factor degrade gracefully as the accuracy of forecast decreases. In particular, if the forecast is perfect their algorithms achieve vanishing regret and under very poor forecasts, the approximation factor remains a constant. While their model and result is based on having a forecast at hand, our result and techniques are very different as the don't rely on any forecasting.

% Indeed, including the regularizer makes the competitive ratio analysis much more complicated, compared with \cite{balseiro2020best}. The 

% We focus, instead, on online allocation problems with potentially independent revenue functions, and obtain competitive ratio that relies on the budget consumption ratio and the regularizer. 

There have been previous works studying online allocation problems with non-separable rewards in the adversarial setting, but most impose more restrictive assumptions on the objective function, which preclude some of the interesting examples we present in Section~\ref{sec:ex}. For example, \citet{devanur2012concave} study a variant of the AdWords problem in which the objective is concave and monotone over requests but separable over resources. \citet{eghbali2016designing} extend their work by considering more general non-separable concave objectives, but still require the objective to be monotone and impose restrictive assumptions on the reward function. Even though they do not consider hard resource constraints, they provide similar primal-dual algorithms that are shown to attain fixed competitive ratios.} \citet{tan2020mechanism} studies a similar problem as our regularized online allocation setting, but restrict attention to one resource.

\textbf{Fairness Objectives.} While most of the literature on online allocation focuses on maximizing an additively separable objective, other features of the allocation, such as fairness and load balancing, sometimes are crucial to the decision maker. Fairness, in particular, is a central concept in welfare economics. Different reasonable metrics of equity have been proposed and studied: max-min fairness, which maximizes the reward of the worst-off agents~\citep{nash1950bargaining,bansal2006santa}; proportional fairness, which makes sure that there is no alternative allocation that can lead to a positive aggregate proportional change for each agent~\citep{azar2010allocate,bateni2018fair}; or $\alpha$-fairness, which generalizes the previous notions~\citep{mo2000fair,bertsimas2011price,bertsimas2012efficiency}, and allows to recover max-min fairness and proportional fairness as special cases when $\alpha=\infty$ and $\alpha=1$, respectively. The above line of work focuses on optimizing the fairness of an allocation problem (without considering the revenue) in either static settings, adversarial settings, or stochastic settings that are different to ours. In contrast, our framework is concerned with maximizing an additively separable objective but with an additional regularizer corresponding to fairness (or other desired ancillary objective).

 A related line of work studies online allocation problems with fairness, equity, or diversity objectives in Bayesian settings in which the distribution of requests is known to decision makers. \cite{lien2014sequential} and \cite{mansahdi2021fair} study the sequential allocation of scarce resources to multiple agents (requests) in nonprofit applications such as distributing food donations across food banks or medical resources during a pandemic. They measure the equity of an allocation using the max-min fill rate across agents, where the fill rate is given by the ratio of allocated amount to observed demand, and design algorithms when the distribution of demand is non-stationary and potentially correlated across agents. In contrast to our work, fairness is measured across requests as opposed to across resources. Relatedly, \citet{ahmed2017diverse} and \citet{dickerson2019balancing} design approximation algorithms for stochastic online matching problems with submodular objective functions. Submodular objectives allow the authors to incorporate the fairness, relevance, and diversity of an allocation in applications related to advertising, recommendation systems, and workforce hiring. 
 \cite{ma2020group} consider stochastic matching problems with a group-level fairness objective, which involves maximizing the minimum fraction of agents served across various demographic groups. The authors design approximation algorithms with fixed competitive ratios when the distribution of requests is known in advance. Group-level fairness objectives can be incorporated in our model by adding a dummy resource for each group and then using a max-min fairness regularizer. \citet{nanda2020balancing} provide an algorithm that can optimally tradeoff the profit and group-level fairness of an allocation for online matching problems and present an application to ride-sharing. Our algorithm is asymptotically optimal for group-level fairness regularizers when the number of groups is sublinear in the number of requests.  \cite{agrawal2018proportional} presents a simple proportional allocation algorithm with high entropy to increase the diversity of the allocation for the corresponding offline problem where all requests are known to the decision maker beforehand.

\textbf{Preliminary Version.} In a preliminary conference proceedings version of this paper~\citep{balseiro2021regularized}, we present the basic setup of the regularized online allocation problem and the regret analysis for online subgradient descent under stochastic i.i.d.~inputs. This paper extends the preliminary proceeding version by (i) introducing a competitive analysis for adversarial inputs, (ii) providing an in-depth study of the geometric structure of the dual problem, (iii) modifying the algorithm and its ensuing analysis to include online mirror descent, which leads to iterate updates with closed-form or simpler solutions.

\vspace{0.2cm}
\section{Problem Formulation and Examples}\label{sec:formulation}
We consider a generic online allocation problem with a finite horizon of $T$ time periods, resource constraints, and a concave regularizer $r$ on the resource consumption. At time $t$, the decision maker receives a request $\gamma_t = (f_t,b_t, \cX_t) \in \mathcal S$ where $f_t\in \RR^d\rightarrow\RR_+$ 
% \HL{Perhaps we can have $\gamma_t = (f_t,b_t,\cX_t)$ here. YES}
is a non-negative (and potentially non-concave) reward function and $b_t:\cX_t\rightarrow\RR^m_+$ is an non-negative (and potentially non-linear) resource consumption function, and $\cX_t \subset \RR^d_+$ is a (potentially non-convex or integral) decision set. We denote by $\mathcal S$ the set of all possible requests that can be received. After observing the request, the decision maker takes an action $x_t\in \cX_t\subseteq \RR^d$ that leads to reward $f_t(x_t)$ and consumes $b_t(x_t)$ resources. The total amount of resources is $T \rho$, where $\rho\in \RR^m_{+}$ is a positive resource constraint vector. The assumption $b_t(x)\ge 0$ implies that we cannot replenish resources once they are consumed. We assume that $0\in \cX_t$ and $b_t(0) = 0$. The above assumption implies that we can always take a void action by choosing $x_t=0$ to make sure we do not exceed the resource constraints. This guarantees the existence of a feasible solution.

\begin{ass}[Assumptions on the requests]\label{ass:p} There exists $\ubf\in\RR_{+}$ and $\ubb \in \RR_+$ such that for all requests $(f,b,\cX) \in \cS$ in the support, it holds $0\le f(x)\le \ubf$ for all $x\in\cX$, $b(x)\ge 0$ and $\|b(x)\|_{\infty}\le \ubb$ for all $x\in\cX$.
\end{ass}

The upper bounds $\ubf$ and $\ubb$ impose regularity on the space of requests and will appear in the performance bounds we derive. They need not be known by the algorithm. We denote by $\|\cdot\|_{\infty}$ the $\ell_\infty$ norm and $\|\cdot\|_{1}$ the $\ell_1$ norm of a vector. %The semi-continuity of the reward function together with the compactness of the feasible set $\cX$ guarantee, by Weierstrass theorem, that a maximizing action $\tilde x_t$ exists in step \eqref{eq:primal_decision} of the algorithm.

% \begin{align}\label{eq:OP}
% \begin{split}
% (O): \ \ \max_{x : x_t \in \cX } &  \sum_{t=1}^T f_t(x_t) + T \roneT \\
% \text{s.t.} & \sum_{t=1}^T b_t(x_t)  \le  T\rho\ ,
% \end{split}
% \end{align}
% where $x_t\in \cX\subseteq \RR^d$ is the action at time $t$, $f_t\in \RR^d\rightarrow\RR_+$ is the non-negative concave reward function received at time $t$, $b_t\in \RR^{m\times d}_+$ is the entry-wise non-negative cost matrix received at time $t$, $\rho\in \RR^m_{++}$ is the positive resource constraint vector, $r$ is a concave regularizer on the consumption. The assumption $b_t\ge 0$ implies that we cannot replenish resources once they are consumed. We assume that the action set $\cX$ is a convex and compact set in $\RR^d_+$, and $0\in \cX$. The above assumption implies that we can only take non-negative actions. Moreover, we can always take a void action by choosing $x_t=0$ in order to make sure we do not exceed the resource constraints. This guarantees the existence of a feasible solution.

%We assume the request $(f_t,b_t)$ at time $t$ is generated i.i.d.~from an unknown distribution $\cP\in \Delta(\cS)$ with finite support $\cS = \{ (f^1,b^1), \ldots, (f^n,b^n) \}$ and where $\Delta(\cS)$ is the space of all probability distributions over $\cS$. 

An online algorithm $A$ makes, at time $t$, a real-time decision $x_t$ based on the current request $\gamma_t = (f_t, b_t, \cX_t)$ and the previous history $\cH_{t-1}:=\{\gamma_s, x_s\}_{s=1}^{t-1}$, i.e.,
\begin{equation}\label{eq:al_update}
x_t =A(\gamma_t|\cH_{t-1})  \ .
\end{equation}
Note that when taking an action, the algorithm observes the current request but has no information about future ones. For example, in display advertising, publishers can estimate, based on the attributes of the visiting user, the click-through rates of each advertiser before assigning an impression. However, the click-through rates of future impressions are not known in advance as these depend on the attributes of the unknown, future visitors. Similar information structures arise in cloud computing, ride-sharing, etc.

For notational convenience, we utilize $\vgamma = (\gamma_1,\ldots,\gamma_T)$ to denote the inputs  over time $1,\ldots,T$.  We define the \emph{regularized reward} of an algorithm for input $\vgamma$ as 
\[
    R(A | \vgamma) = \sum_{t=1}^T f_t(x_t) + T r\left(\frac 1 T \sum_{t=1}^T b_t(x_t) \right) \,,
\]
where $x_t$ is computed by \eqref{eq:al_update}. Moreover, algorithm $A$ must satisfy constraints $\sum_{t=1}^{T} b_{t} (x_{t}) \le \rho T$ and $x_{t}\in \cX_t$ for every $t\le T$. { While the regularizer $r$ is imposed on the average consumption, our model is flexible and can accommodate regularizers that act on other quantities such as total rewards by introducing dummy resource constraints (see Example~\ref{ex:SC} for more details).} %An extreme case is that we can set $\rho=\ubb e$ ($e$ is the all-one vector) so that effectively we look for actions to maximize the additionally separable total reward and the non-separable regularizer without considering the resource constraints.

The baseline we compare with is the reward of the optimal solution when the request sequence $\vgamma$ is known in advance, which amounts to solving for an allocation that maximizes the regularized reward subject to the resource constraints under full information of all requests:
% \HL{We perhaps should have $x_t\in \cX_t$ here so that the decision space can be time-dependent.}
\begin{align}\label{eq:OPT}
\OPT(\vec \gamma)=
	\max_{x: x_t\in \cX_t} & \sum_{t=1}^T f_t(x_t) + T \roneT     \\
	\text{s.t.}     & \sum_{t=1}^T b_t(x_t) \le T\rho\,. \nonumber
\end{align}
The latter problem is referred to as the offline optimum in the computer science or hindsight optimum in the operations research literature.

Our goal is to design an algorithm $A$ that attains good performance, while satisfying the resource constraints, for the stochastic i.i.d.~input model and the adversarial input model. The notions of performance depend on the input models and, as such, they are formally introduced in Sections~\ref{sec:iid-model} and~\ref{sec:adversarial} where the input models are presented.

% \begin{align*}
% \Regret{A} = \sup_{\cP \in \Delta(\cS)} \left\{ \OPT(\cP) - R(A|\cP) \right\}\,.
% \end{align*}

% In particular, we define the expected reward of an algorithm $A$ over distribution $\cP\in \Delta(\cS)$ as
% \begin{equation*}
% R(A|\cP)=\EE_{\cP}\mpran{ \sum_{t=1}^T f_t(x_t) + T\roneT} \ ,
% \end{equation*}
% where $x_t$ is computed by \eqref{eq:al_update}. The baseline we compare with is the expected reward of the optimal solution when all requests are known in advance, which is also referred as the offline problem in the computer science literature. This amounts to solving for the optimal allocation under full information of all requests and then taking expectations over all possible realizations: 
% \begin{equation}\label{eq:OPT}
% \OPT(\cP)=\EE_{\cP} \mpran{
% 	\begin{array}{cl}
% 	\max_{x_t\in \cX} & \sum_{t=1}^T f_t(x_t) + T \roneT     \\
% 	\text{s.t.}     & \sum_{t=1}^T b_t(x_t) \le T\rho 
% 	\end{array}}\ .
% \end{equation}
% Our goal is to a design an algorithm $A$ that attains low regret while satisfying above constraints. We measure the regret of an algorithm as the worst-case difference, over distributions in $\Delta(\cS)$, between the expected performance of the benchmark and the algorithm:
% \begin{align*}
% \Regret{A} = \sup_{\cP \in \Delta(\cS)} \left\{ \OPT(\cP) - R(A|\cP) \right\}\,.
% \end{align*}

\subsection{Examples of the Regularizer}\label{sec:ex}

We now present some examples of the regularizer. To guarantee that the total rewards are non-negative we shift all regularizers to be non-negative. Many of our results can also be extended to negative regularizers. First, by setting the regularizer to zero, we recover a standard online allocation problem.
% \HL{Needs to make sure $r\ge 0$ in the examples stated here.}
    \begin{example}[No Regularizer]\label{ex:no_regularizer}
	When the regularizer is $r(a) = 0$, we recover the non-regularized online allocation problem.
\end{example}

Our next example allows for max-min fairness guarantees, which have been studied extensively in the literature~\citep{nash1950bargaining,bansal2006santa,mo2000fair,bertsimas2011price,bertsimas2012efficiency}. Here we state the regularizer in terms of consumption, which seeks to maximize the minimum relative consumption across resources. In many settings, however, it is reasonable to state the fairness regularizer in terms of other quantities such as the cumulative utility of individual agents or the amount of resources delivered to different demographic groups, i.e., group-level fairness. As discussed in Example~\ref{ex:SC}, such regularizers can be easily accommodated by introducing dummy resource constraints.

% \red{Is the following example fairness?}
\begin{example}[Max-min Fairness]\label{ex:minimal_cons2}
	The regularizer is defined as $r(a) = \lambda \min_{j} (a_j/\rho_j)$, i.e., the minimum relative consumption. This regularizer imposes fairness on the consumption between different resources, making sure that no resource gets allocated a too-small number of units. Here $\lambda > 0$ is a parameter that captures importance of the regularizer relative to the rewards.
\end{example}

% Max-min fairness seeks to maximize the consumption of the worst-off advertiser. Proportional fairness is an alternative metric that is less averse to inequality than max-min fairness while still allowing to capture inequality of an allocation. 

% \begin{example}\label{ex:proportional_cons}
% \textbf{(Proportional Fairness)} The regularizer is defined as $r(a) = \lambda \left( \prod_{j=1}^m (a_j/\rho_j)\right)^{1/m}$, i.e., the geometric mean of relative consumption. This regularizer imposes fairness on the consumption portion between different advertiser, making sure that the geometric mean of relative consumption is not-too-small.
% \end{example}

In applications like cloud computing, the load should be balanced across machines to avoid congestion. The following regularizer is reminiscent of the makespan objective in machine scheduling. The load balancing regularizer seeks to maximize the minimum relative availability across resources or, alternatively, minimize the maximum relative consumption across resources.

\begin{example}[Load Balancing]\label{ex:minimal_cons}
	The regularizer is defined as $r(a) =  \lambda \min_{j} \big( (\rho_j - a_j)/\rho_j \big)$, i.e., the minimum relative resource availability. This regularizer guarantees that consumption is evenly distributed across resources by making sure that no resource is too demanded.
\end{example}

In some settings, like cloud computing, firms would like to avoid saturating resources because the costs of utilizing the resource exhibit a decreasing return to scale or to keep some idle capacity to preserve flexibility. The next regularizer allows to capture situations in which it is desired to keep resource consumption below a target.

\begin{example}[Below-Target Consumption]\label{ex:hinge_loss}
	The regularizer is defined as $r(a)=\sum_{j=1}^m c_j \min(\rho_j-a_j, \rho_j-t_j)$ with thresholds $t_j \in [0,\rho_j]$ and unit reward $c_j$. This regularizer can be used when the decision maker receives a reward $c_j$ for reducing each unit of resource below a target $t_j$. 
\end{example}

To maximize reach, internet advertisers, in some cases, prefer that their budgets are spent as much as possible or their reservation contracts are delivered as many impressions as possible. We can incorporate these features by rewarding the decision making whenever the target is met.

\begin{example}[Above-Target Consumption]\label{ex:hinge_loss2}
	The regularizer is defined as $r(a)=\sum_{j=1}^m c_j \min(a_j, t_j)$ with thresholds $t_j \in [0,\rho_j]$ and unit reward $c_j$. This regularizer can be used when the decision maker would like to consume at least $t_j$ units of resource $j$ and the decision maker is offered a unit reward of $c_j$ for each unit closer to the target.
\end{example}

The regularizers in the previous examples act exclusively on resource consumption. The next example shows that by adding dummy resource constraints that never bind, it is possible to incorporate regularizers that act on other quantities.

% While the reward function $f_t$ measures the total reward for all advertisers, the regularizer intends to make sure the minimal reward of the advertisers is not too small. 

\begin{example}[Santa Claus Regularizer from \citealt{bansal2006santa}]\label{ex:SC}
	There are $k$ agents (i.e., the kids) and the decision maker (i.e., Santa Claus) intends to make sure the minimal reward (i.e., the presents) of each agent is not too small. Here we consider	an additional reward function $q_t: \cX_t \rightarrow \RR_+^k$ that gives the reward vector for each agent under decision $x_t$. To reduce such problem to our setting, we first add auxiliary resource constraints $\sum_{t=1}^T q_t( x_t)\le T \bar q e$ that never bind, where $\bar q$ is a uniform upper bound on the additional rewards and $e\in\RR^k$ is the all-one vector. Then, the regularizer is $r\pran{\frac{1}{T}\sum_{t=1}^T q_t(x_t), \frac{1}{T}\sum_{t=1}^T b_t(x_t) }=\frac 1 {T} \min_j \sum_{t=1}^T (q_t(x_t))_j$.
\end{example}

\section{The Dual Problem}\label{sec:dual-problem}

Our algorithm is of dual-descent nature and, thus, the Lagrangian dual problem of \eqref{eq:OPT} and its constraint set play a key role. We construct a Lagrangian dual of \eqref{eq:OPT} by introducing an auxiliary variable $a \in \RR^m$ satisfying $a = \frac 1 T \sum_{t=1}^T b_t(x_t)$ that captures the \emph{time-averaged resource consumption} and then move the constraints to the objective using a vector of Lagrange multipliers $\mu \in \RR^m$. The auxiliary primal variable $a$ and the dual variable $\mu$ are key ingredients of our algorithm. For $\mu \in \RR_+^m$ we define
\begin{equation}\label{eq:conjugate}
f_t^*(\mu):=\sup_{x\in \cX_t} \{f_t(x)-\mu^{\top} b_t(x)\}\,,
\end{equation}
as the optimal opportunity-cost-adjusted reward of request $\gamma_t$. The function $f_t^*(\mu)$ is a generalization of the convex conjugate of the function $f_t(x)$ that takes account of the consumption function $b_t(x)$ and the constraint space $\cX_t$. We define
\begin{equation}
    r^*(-\mu)=\sup_{a\le \rho}\{r(a)+\mu^\top a\}\,,
\end{equation}
as the conjugate function of the regularizer $r$.
 Define $\cD=\{\mu \in \mathbb R^m \mid  r^*(-\mu) <+\infty\}$ as the set of dual variables for which the conjugate of the regularized is bounded. For a fixed input $\vgamma$, define the Lagrangian dual function $D(\mu | \vgamma):\cD\rightarrow\RR$ as
$$D(\mu | \vgamma ):=\sum_{t=1}^T f_t^*(\mu)  +T r^*(-
\mu)\ .$$ Then, we have by weak duality that $D(\mu|\vgamma)$ provides an upper bound on $\OPT(\vgamma)$.  To see this, note that for every $\mu \in \cD$ we have that
\begin{align}\label{eq:obtain_dual}
\begin{split}
 \OPT(\vgamma)
= &  \mpran{
	\begin{array}{cl}
	\max_{x_t\in \cX_t, a\le \rho} & \sum_{t=1}^T f_t(x_t) +T r(a)      \\
	\text{s.t.}     & \sum_{t=1}^T b_t(x_t) = T a
	\end{array}}\\
\le &  \mpran{\max_{x_t\in \cX_t, a\le \rho} \npran{\sum_{t=1}^T f_t(x_t) + T r(a)+ T\mu^{\top} a - \mu^{\top} \sum_{t=1}^T  b_t(x_t) }} \\
= &  \sum_{t=1}^T f_t^*(\mu)  +T r^*(-
\mu) 
= D(\mu | \vgamma ) \ ,
\end{split}
\end{align}
where  the first inequality is because we relax the constraint $\sum_{t=1}^T b_t(x_t) = T a$, and the last equality utilizes the definition of $r^*$ and $f^*$.

% \begin{prop}\label{prop:upper_bound}
% It holds for every $\mu \in \cD$ that $\OPT(\vgamma) \le D(\mu | \vgamma )$.
% \end{prop}
% However, strong duality no longer holds due to the non-convexity of the revenue function $f_t$ as well as the non-convexity/integrity of the action space $\cX_t$. But fortunately, \HL{Should we have a rigorous argument here on the Shapley-Folkman Theorem or a hand-waving argument? Hand-waving. Cite Bertsekas.}

\subsection{Characterization of the Dual Problem}\label{sec:characterization_dual}

%The feasible region of the dual problem, as given by $\cD$, together with the conjugate of the regularizer play a key role in our algorithm and in its performance analysis. 

In this section, we prove some preliminary properties that elucidate how the choice of a regularizer impacts the geometry of the dual feasible region $\cD$, which is important for our algorithm and its ensuing analysis. Throughout the paper we assume the following conditions on the regularizers. As we shall see in Section~\ref{sec:algorithm-examples}, these conditions are satisfied by all examples discussed in the paper.

\begin{ass}[Assumptions on the regularizer $r$]\label{ass:r2} We assume the regularizer $r$ and the set $\cD=\{\mu \in \mathbb R^m \mid  r^*(-\mu) <+\infty\}$ satisfy:
	\begin{enumerate}

		\item{\color{black} The function $r(a)$ is $L$-Lipschitz continuous in the $\|\cdot\|_{\infty}$-norm on its effective domain, i.e., $|r(a_1) - r(a_2)| \le L \| a_1 - a_2\|_{\infty}$ for any $a_1,a_2 \le \rho$.}
		
		\item{\color{black} There exists a constant $\bar a$ such that for any $\mu\in\cD$, there exists $a\in\arg\max_{a\le \rho}\{ r(a)+\mu^\top a\}$ such that $\|a\|_{\infty}\le \bar a$.}
		
		\item {There exists $\ubr$ such that $0\le r(a)\le \ubr$ for all $0 \le a \le \rho$.}
		
		\item The function $r(a)$ is concave.
		
	\end{enumerate}
\end{ass}

The first part imposes a continuity condition on the regularizer, which is a standard assumption in online optimization. We choose to use the $\ell_\infty$ norm in order to be consistent with Assumption~\ref{ass:p}, where we chose this norm to measure consumption (i.e., $\|b(x)\|_{\infty}\le \ubb$). The second part guarantees that the conjugate of the regularizer has a bounded subgradient, which is a technical assumption required for our analysis, while the last part imposes that the regularizer is non-negative and upper-bounded. The assumption that the regularizer is non-negative is required for the analysis of our algorithm when the input is adversarial (as competitive ratios are not well defined when rewards can be negative). Our results for stochastic inputs hold without this assumption. 

%Proposition~\ref{prop:examples}  immediately implies that the assumption is satisfied by all examples we consider in this paper.

\begin{figure}
    \centering
    \begin{tikzpicture}[scale=2]
        \draw[->] (-1,0) -- (2,0) node[right] {$\mu_1$};
        \draw[->] (0,-1) -- (0,2) node[above] {$\mu_2$};
        
        \fill[domain=0.125:2,smooth,variable=\x,blue!10] plot ({\x},{0.25/\x})  -- (2,2) -- cycle;
        
        \draw[thick,domain=0.125:2,smooth,variable=\x,blue] plot ({\x},{0.25/\x}) node[above] {$\cD$};

        \draw[-,dashed] (0.75,2) -- (0.75,0.75) -- (2,0.75) node[above] {\small $\mu \ge L e$};
        \draw[-,dashed] (-0.75,2) -- (-0.75,0) -- (0, -0.75) -- (2,-0.75) node[above] {\small$\sum_{j=1}^m(\mu_j)^-\le L$};      
        
        \filldraw (0.75,0.75) circle (1pt) node[below] {\small$L e$};
        \filldraw (-0.75,-0.75) circle (1pt) node[below] {\small$-L e$};
    \end{tikzpicture}
    \caption{Feasible region $\cD$ for dual problem (shaded region).}
    \label{fig:dual_feasible_region}
\end{figure}
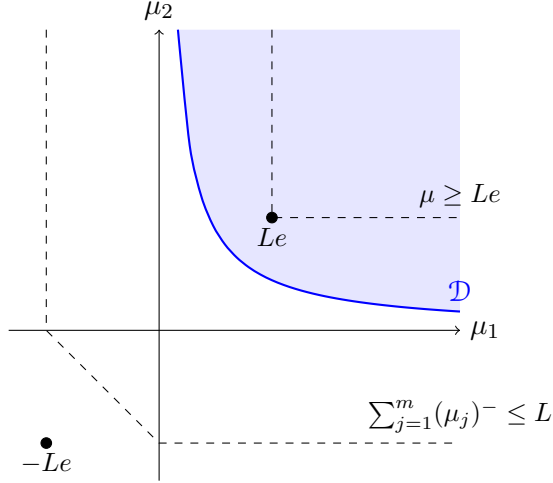

The next result provides a partial geometric characterization of the dual feasible region. Figure \ref{fig:dual_feasible_region} plots the domain $\cD$ for a typical two-dimensional regularizer.%, which is well-exploited later in the proof of the regret analysis.

\begin{lem}\label{lem:mu_negative}
The following hold:
\begin{enumerate}[(i)]
\item The set $\cD$ is convex and positive orthant is inside the recession cone of $\cD$, i.e., $\RR^m_+ \subseteq  \emph{recc}(\cD)$.
\item $L e\in \cD$, where $e$ is the all-one vector in $\RR^d$.
\item Let $a \in \arg\max_{a\le \rho} \{r(a)+\mu^\top a\}$. If $\mu_j \ge L$, then $a_j = \rho_j$.
\item For any $\mu\in\cD$, we have that $\sum_{j=1}^m(\mu_j)^-\le L$, where $(\mu_j)^- = \max\{-\mu_j,0\}$ is the negative part of $\mu_j$.

\end{enumerate}
\end{lem}

Property (i) shows that the feasible set is convex, which is useful from an optimization perspective, and unbounded as, for every point, all coordinate-wisely larger points lie in it. The Lipschitz continuity constant of the regularizer plays a key role on the geometry of the dual feasible set. In the case of no regularizer, we have $L=0$ and the properties (ii) and (iv) give that the dual feasible set is the non-negative orthant $\cD = \RR_+^m$. This recovers the well-known result that the opportunity cost of consuming a unit of resource is non-negative in allocation problems. As we shall see, with the introduction of a regularizer the opportunity costs might be  negative, which could induce a decision maker to consume resources even when these generate no reward. For example, in the case of max-min fairness, a resource with low relative consumption might be assigned a negative dual variable to encourage the decision maker to increase its consumption and, thus, increase fairness. Property (iv) limits how negative dual variables can get. In particular, it implies that $\mu_j \ge -L$ for every resource $j$. In other cases, the regularizer can force the dual variables to be larger than zero to reduce the rate of resource consumption. For example, in the case of load balancing, setting the opportunity costs to zero can lead to resource saturation and, as a result, the dual feasible set $\cD$ is a strict subset of the non-negative orthant. Property (ii) upper bounds the minimum value that dual variables can take. Taken together, properties (ii) and (iii) provide bounds on the lower boundary of the dual feasible region in terms of the Lipschitz constant. See Figure~\ref{fig:dual_feasible_region} for an illustration. We defer an explanation of property (iv) until the next section.

\section{Algorithm}\label{sec:algorithm}
\begin{algorithm}[tb]
	
	\SetAlgoLined
	{\bf Input:} Initial dual solution $\mu_0 \in \cD$, total number of time periods $T$, initial resources $B_0=T\rho$, reference function $h(\cdot): \RR^m\rightarrow \RR$, and step-size $\eta$. \\
	\For{$t=0,\ldots,T-1$}{
		Receive $(f_t, b_t, \cX_t)$.\\
		Make the primal decision and update the remaining resources:
		\begin{align}
		\tilde{x}_{t} &= \arg\max_{x\in\cX_t}\{f_{t}(x)-\mu_{t}^{\top} b_{t} (x)\} \ ,\label{eq:primal_decision} \\
		x_{t}&=\left\{\begin{array}{cl}
		\tilde{x}_t    & \text{  if } b_t(\tx_t) \le B_t  \\
		0
		& \text{ otherwise} 
		\end{array} \right. \ , \nonumber \\
		B_{t+1} &= B_t - b_t(x_t) . \nonumber
		\end{align}\\
		%   \begin{align*}
		%       \begin{split}
		%           x_{t} &= \tilde{x}_t \text{  if  b_{t}x_{t}\le B_t} \\
		%           &=
		%       \end{split}
		%   \end{align*}
		
		Determine the target resource consumption:
		\[
		a_t = \arg\max_{a\le \rho}\{r(a)+\mu_t^\top a\}\ .
		\]\\
		
		Obtain a subgradient of $D_t(\mu_t| \gamma_t)$: $$ \  \tg_t = -b_t (x_t) + a_t\ .$$\\

		 Update the dual variable by mirror descent:
			\begin{equation}\label{eq:dual_update}
           \mu_{t+1} = \arg\min_{\mu\in\cD} \langle \tg_t, \mu \rangle + \frac{1}{\eta} V_h(\mu, \mu_t) \ ,
       \end{equation}
    \color{black} where $V_h(x,y)=h(x)-h(y)-\nabla h(y)^\top (x-y)$ is the Bregman divergence.
%       \red{
% 		[PROXIMAL]
% 		\begin{equation}\label{eq:dual_update}
% 		       \mu_{t+1} = \arg\min_{\mu\in\cD} \langle -b_t\tx_t, \mu \rangle + \frac{1}{\eta} V_h(\mu, \mu_t) +r^*(-\mu)\ .
% 		  \end{equation}
% 	}
	}
	\caption{Dual Mirror Descent Algorithm for \eqref{eq:OPT}}
	\label{al:sg}
\end{algorithm}

% \red{Proximal mirror descent may be confused but can be as a remark.???}

Algorithm \ref{al:sg} presents the main algorithm we study in this paper. Our algorithm keeps a dual variable $\mu_t \in \cD$ for each resource that is updated using mirror descent~\citep{hazan2016introduction}.

At time $t$, the algorithm receives a request $(f_t,b_t,\cX_t)$, and computes the optimal response $\tx_t$ that maximizes an opportunity cost-adjusted reward of this request based on the current dual solution $\mu_t$ according to equation~\eqref{eq:primal_decision}. When the dual variables are higher, the algorithm naturally takes decisions that consume less resources. It then takes this action (i.e., $x_t=\tx_t$) if the action does not exceed the resource constraint, otherwise it takes a void action (i.e., $x_t=0$). Additionally, it chooses a target resource consumption $a_t$ by maximizing the opportunity-cost adjusted regularized (in the next subsection we give closed-form expressions for $a_t$ for the regularizers we consider).

Writing the dual function as $D(\mu | \vgamma ):=\sum_{t=1}^T D_t(\mu | \gamma_t)$ where the $t$-th term of the dual function is given by $D_t(\mu | \gamma_t) = f_t^*(\mu)  +r^*(-\mu)$, it follows that $\tg_t=-b_t (x_t)+a_t$ is subgradient of $D_t(\mu| \gamma_t)$ at $\mu_t$ whenever decisions are not constrained by resources. To see this, notice that it follows from the definition of conjugate function \eqref{eq:conjugate} that $-b_t(x_t)\in\partial f_t^*(\mu_t)$ and $a_t \in \partial r^*(-\mu_t)$. Therefore
\begin{align*}
&\tg_t= -b_t (x_t)+a_t \in
\partial \left( f_t^*(\mu_t) + r^*(-\mu_t) \right) \in \partial D_t(\mu_t| \gamma_t) \ .
\end{align*}
Finally, the algorithm utilizes $\tg_t$ to update the dual variable by performing the online dual mirror descent descent step in \eqref{eq:dual_update} with step-size $\eta$ and reference function $h$. 

Intuitively, by minimizing the dual function the algorithm aims to construct good dual solutions which, in turn, would lead to good primal performance. The mirror descent step~\eqref{eq:dual_update} updates the dual variable by moving in the opposite direction of the gradient (because we are minimizing the dual problem) while penalizing the movement from the current solution using the Bregman divergence. Dual variables are projected to the set $\cD$ to guarantee dual feasibility. The target resource consumption $a_t$ and the geometry of dual feasible set $\cD$ capture the tension between reward maximization and the regularizer. 

When the optimal dual variables are interior, by minimizing the dual function, the algorithm strives to make subgradients vanishing or, equivalently, set the consumption close to the target resource consumption, i.e., $b(x) = a$. Conversely, when optimal dual variables lie in the boundary of $\cD$, we have that $b(x) \le a$  because the non-negative orthant lies in the recession cone of $\cD$. By definition, we know that $a \le \rho$, which guarantees that the resource constraints are satisfied, i.e., on average at most $\rho$ unit of resources should be spent per time period. The actual value of target resource consumption is dynamically adjusted based on the dual variables and the regularizer. %For example, for the above-target regularizer, $a_t$ is set lower when dual variables are lower. 

In most of our examples, the regularizer plays an active role when the dual variables are low. When the dual variables are higher than the marginal contribution of the regularizer (as captured by its Lipschitz constant), property (iv) of Lemma~\ref{lem:mu_negative} implies that $a_t = \rho$ and decisions are made to satisfy the resource constraints. Moreover, in this case the projection onto $\cD$ does not play a role because dual variables are far from the boundary. This property is intuitive: a high opportunity cost signals that resources are scarce and, as a result, maximizing rewards subject to  resource constraints---while ignoring the regularizer---becomes the primary consideration.

Assumption \ref{ass:h} presents the standard requirements on the reference function $h$ for an online mirror descent algorithm:
% \red{ The descent step~\eqref{eq:dual_update} can be interpreted as minimizing over $\cD$ a first-order Taylor expansion of the dual objective plus a term that penalizes movement from the incumbent solution $\mu_{t}$ using the weighted $\ell_2$-norm $\|\cdot\|_w$, which is given by $\|x\|_w^2 = \sum_{j=1}^m w_j x_j^2$ for some weight vector $w \in \RR^m_{++}$.}
%Recall that the Bregman divergence with respect to a given convex reference function $h(\cdot)$ is defined as $V_h(x,y):=h(x)-h(y)-\langle \nabla h(y), x-y \rangle$.}

\begin{ass}[Assumptions on reference function $h$]\label{ass:h}The reference function satisfies:
 \begin{enumerate}
     \item $h(\mu)$ is either differentiable or essentially smooth in $\cD$.

     \item $h(\mu)$ is $\sigma$-strongly convex in $\ell_1$-norm in $\mathcal{D}$, i.e., $h(\mu_1)\ge h(\mu_2) + \langle \nabla h(\mu_2), \mu_1-\mu_2\rangle + \frac{\sigma}{2}\|\mu_1-\mu_2\|_1^2$ for any $\mu_1,\mu_2\in \mathcal{D}$.
 \end{enumerate}
 \end{ass}

Strong convexity of the reference function is a standard assumption for the analysis of mirror descent algorithms~\citep{bubeck2015convex}. The previous assumptions imply, among other things, that the projection step \eqref{eq:dual_update} of the algorithm always admits a solution. To see this, note that continuity of the regularizer implies that $\cD$ is closed by Proposition 1.1.6 of \cite{bertsekas2009convex}. The objective is continuous and coercive. Therefore, the projection problem admits an optimal solution by Weierstrass theorem. We here use the $\ell_1$ norm to define strong convexity in the dual space (i.e., in $\mu$-space), which is the dual norm of the $\ell_{\infty}$ norm that, in turn, is the norm used in the primal space as per Assumption~\ref{ass:p}. % (i.e., $\|bx\|_{\infty}\le \ubb$).

% We discuss some good choices for the weight $w$ for Examples \ref{ex:minimal_cons2}-\ref{ex:hinge_loss2} in Appendix~\ref{sec:reference_function}. In particular, in Example~\ref{ex:minimal_cons2}, the dual feasible set $\cD$ has an exponential number of constraints, but using weights $w_j = \rho_j^2$ we can cast \eqref{eq:dual_update} as quadratic program with a linear number of constraints. In Example~\ref{ex:minimal_cons}, the constraint $\cD$ is a simple polytope, and we can again cast \eqref{eq:dual_update} as quadratic program with a linear number of constraints. Finally, in examples~\ref{ex:hinge_loss} and \ref{ex:hinge_loss2}, the set $\cD$ is a simple box constraint, and we can recover projected gradient descent by using the un-weighted Euclidean norm.

Algorithm~\ref{al:sg} only takes an initial dual variable and a step-size as inputs, and is thus simple to implement. In practice, the step-size can be tuned, similar to other online first-order methods.  We analyze the performance of our algorithm in the Section~\ref{sec:performance}. In some cases, though, the constraint set $\cD$ can be complicated, but a proper choice of the reference function may make the descent step \eqref{eq:dual_update} easily computable (in linear time or with a closed-form solution). In the rest of this section, we discuss the constraint sets $\cD$ and suitable references function $h$ for each of the examples.

% \subsection{The Constraint Set $\cD$ for the  Examples}\label{sec:constraints_for_examples}

\subsection{Application of the Example Regularizers}\label{sec:algorithm-examples} 

We now apply Algorithm~\ref{al:sg} to each of the examples presented in Section~\ref{sec:ex}. For each of these examples, we characterize the dual feasible set and the target resource consumption $a^*(-\mu) \in \arg\max_{a\le \rho}\{r(a)+\mu^\top a\}$. Notice that with the introduction of a regularizer, the dual constraint set $\cD$ can become complex. For example, when the regularizer is max-min fairness, the set $\cD$ is a polytope with exponentially many cuts. As a result, the projection step in \eqref{eq:dual_update} sometimes may be difficult. Fortunately, mirror descent allows us to choose the reference function $h$ to capture the geometry of the feasible set, and a suitable reference function can get around such issues. Next, we present different choices of reference functions for each example, such that \eqref{eq:dual_update} has either closed-form solution or it can be solved by a simple optimization program. Proofs of all mathematical statements are available in Appendix~\ref{sec:appendix-examples}.

\paragraph{Example~\ref{ex:no_regularizer} (No Regularizer).} When $r(a) = 0$, we have that $\cD=\RR^m_+$ and, for $\mu\in\cD$, $r^*(-\mu) = \mu^\top \rho$ and $a^*(-\mu) = \rho$. In this case, the algorithm always uses $\rho$ as the target resource consumption, i.e., it aims to deplete $\rho$ resources per unit of time. The projection to the non-negative orthant can be computed in closed form. If we choose the reference function to be the squared Euclidean norm $h(\mu)=\frac{1}{2}\|\mu\|_2^2$, the update \eqref{eq:dual_update} has the closed-form solution $\mu_{t+1}=\max( \mu_t-\eta g_t, 0)$, where the maximum is to be interpreted coordinate-wise. This recovers online subgradient descent. If we choose the reference function to be the negative entropy $h(\mu)=\sum_{j=1}^m \mu_j\log(\mu_j)$, the update \eqref{eq:dual_update} has the closed-form solution $\mu_{t+1}=\mu_t \circ \exp(-\eta g_t)$ where $\circ$ denotes the coordinate-wise product. This recovers the multiplicative weights update algorithm.

\paragraph{Example~\ref{ex:minimal_cons2} (Max-min Fairness).} When $r(a) = \lambda\min_{j} (a_j/\rho_j)$, then $\cD=\big\{\mu \in \mathbb R^m \mid \sum_{j \in S} \rho_j \mu_j \ge -\lambda \ \forall S \subseteq [m]\big\}$, and, for $\mu \in\cD$,  $r^*(-\mu)= \rho^\top \mu+\lambda$ and $a^*(-\mu) = \rho$. As a result, the target resource consumption is always set to $\rho$ and the regularizer impacts the decisions of the algorithm via the feasible region $\cD$. To gain some intuition, note that the feasible set can be alternatively written as $\cD=\big\{\mu \in \mathbb R^m \mid \sum_{j=1}^m \rho_j (\mu_j)^- \le  \lambda \big\}$ because only the constraints for resources with negative dual variables can bind in the original polyhedral representation (this set resembles the lower dotted boundary in Figure~\ref{fig:dual_feasible_region}). Therefore, for any single resource we have that $\mu_j \ge -\lambda/\rho_j$. By letting dual variables go below zero, the algorithm can force the consumption of a resource to increase. The intuition is simple: even if consuming a resource might not beneficial in terms of reward, increasing consumption can increase fairness. The constraint $\sum_{j=1}^m \rho_j (\mu_j)^- \le  \lambda$ bounds the total negative contribution of dual variables and indirectly guarantees that only resources with the lowest relative levels of consumption are prioritized.

Note that there are exponential number of linear constraints in the domain $\cD$. Fortunately, we can get around this issue when the reference function $h$ is chosen to have the form $h(\mu) = \sum_{j=1}^m q(\rho_i \mu_i)$ for some convex function $q:\RR \rightarrow \RR$ by utilizing the fact that $\cD$ is coordinate-wisely symmetric in $\rho \circ \mu$, where $\circ$ is the coordinate-wise product.
	
To obtain $\mu_{t+1}$, we first compute the dual update without projection as
\begin{align}\label{eq:update-wo-projection}
    \tmu_t = \nabla h^*(\nabla h(\mu)-\eta \tg_t)\,,
\end{align}
where $h^*$ is the convex conjugate of the reference function (see, e.g., \citealt[Excercise 3.40]{boyd2004convex}). Then, it holds that
	\begin{equation}\label{eq:subp}
	\mu_{t+1}=\arg\min_{\mu\in\cD} V_h(\mu, \tmu_t)\ .
	\end{equation}
{After rescaling with $\rho$, both the reference function $h(\mu)$ and the domain $\cD$ are coordinate-wisely symmetric. Therefore,} the projection problem \eqref{eq:subp} keeps the order of $\rho \circ\mu$ as $\nabla h$ is monotone.\footnote{This can be easily seen by contradiction as following: Suppose there exists $(j_1)<(j_2)$ such that $\rho_{j_1}(\tmu_t)_{j_1}<\rho_{j_2}(\tmu_t)_{j_2}$ and $\rho_{j_1}(\mu_{t+1})_{j_1}>\rho_{j_2}(\mu_{t+1})_{j_2}$. Consider the solution $\hmu_{t+1}$ which is equal to $\mu_{t+1}$ except on the coordinates $j_1$ and $j_2$, in which we set
	$(\hmu_{t+1})_{j_1}=\frac{\rho_{j_2}}{\rho_{j_1}}{(\mu_{t+1}})_{j_2}$ and $(\hmu_{t+1})_{j_2}=\frac{\rho_{j_1}}{\rho_{j_2}}{(\mu_{t+1})}_{j_1}$, 
	then it holds by the symmetry of $\cD$ in $\rho\circ \mu$ that $\hmu_{t+1}\in\cD$, and moreover, it holds that
\begin{align*}
    V_h(\mu_{t+1}, \tmu_t)-V_h(\hmu_{t+1}, \tmu_t)
    =& h(\mu_{t+1}) - h(\hmu_{t+1}) + \langle \nabla h(\tmu_t), \hmu_{t+1}-\mu_{t+1} \rangle \\ 
    =& \left\langle \dot{q}\pran{\rho_{j_1}(\tmu_t)_{j_1}}-\dot{q}\pran{\rho_{j_2}(\tmu_t)_{j_2}}, \rho_{j_1}(\tmu_t)_{j_1} - \rho_{j_2}(\tmu_t)_{j_2} \right\rangle 
    >0 \ ,
\end{align*}
where the last inequality is due to the strong convexity of $q$. This contradicts with the optimality of $\mu_{t+1}$ as given in equation \eqref{eq:subp}.} Therefore, if we relabel indices so that $\rho\circ\tmu_t$ is sorted in non-decreasing order, i.e., $\rho_{1}(\tmu_t)_{1}\le \rho_{2}(\tmu_t)_{2} \le \dots \le \rho_{m}(\tmu_t)_{m}$ we can reformulate \eqref{eq:subp} as:
	\begin{align*}
	\min_{\mu \in \RR^m}\,     &  V_h(\mu, \tmu_t)=\sum_{j=1}^m V_{q}(\rho_{j} \mu_{j} , \rho_{j} (\tmu_t)_{j} ) \\
	\text{s.t. }    & \sum_{j=1}^s \rho_{j} \mu_{j} \ge -\lambda \text{ \ \ \ \     for \ \ \ \ } s = 1,\ldots,m\,.
	\end{align*}
In particular, when $q(\mu)=\mu^2/2$, the latter problem is a $m$-dimensional convex quadratic programming with $2m$ constraints, which can be efficiently solved by convex optimization solvers.

\paragraph{Example~\ref{ex:minimal_cons} (Load Balancing).} When $r(a) =  \lambda \min_{j} \big( (\rho_j - a_j)/\rho_j \big)$, then $\cD=\big\{\mu \ge 0 \mid \sum_{j=1}^m \rho_j \mu_j \ge \lambda\big\}$, and, for $\mu \in\cD$,  $r^*(-\mu)= \rho^\top \mu-\lambda$ and $a^*(-\mu) = \rho$. As in the case of the fairness regularizer, the target resource consumption is always set to $\rho$ and the regularizer impacts the decisions of the algorithm via the feasible region $\cD$. The feasible region is the non-negative orthant minus a scaled simplex with vertices $\lambda / \rho_j$. Recall that the load balancing regularizer seeks to maximize the minimum relative resource availability.  When the opportunity cost of any single resource is very large, in the sense that $\mu_j > \lambda / \rho_j$ for some $j \in [m]$, resource $j$ is so scarce that increasing its availability is not beneficial. In this case, resource $j$ would end up being fully utilized and, because the regularizer is zero, load balancing no longer plays a role in the algorithm's decision. The load balancing regularizer kicks in when all resources are not-too-scarce. Recall that resources with typical consumption below the target would have dual variables close to zero. Therefore, to balance the load, the dual variables are increased so that the relative consumption of the resources match.

In this case $\cD$ is a polyhedron with a linear number of constraints and \eqref{eq:dual_update} can be computed by solving a simple quadratic program if we employ the squared weighted Euclidean norm $h(\mu)=\frac{1}{2}\|\rho \circ \mu\|_2^2$ as a regularizer. Interestingly, using a scaled negative entropy we can compute the update in closed form. In particular, consider the reference function $h(\mu) = \sum_{j=1}^{m} \rho_j \mu_j \log(\rho_j \mu_j)$. Let $\tilde \mu_{t}$ be the dual update without the projection as in \eqref{eq:update-wo-projection}, which in this case is given by $\tilde \mu_{t} = {\mu_t} \circ \exp(-\eta \tg_t \circ \rho^{-1})$. Then, the dual update is obtained by solving \eqref{eq:subp}, which amounts to projecting back to the scaled unit simplex. For the scaled negative entropy, we have:
 \[
     \mu_{t+1} =
     \begin{cases}
         \tilde \mu_{t} & \text{if } \rho^\top \tmu_{t} \ge \lambda\,,\\
         \frac{\lambda} {\rho^\top \tilde \mu_{t}} \tmu_{t}& \text{otherwise}\,,
     \end{cases}
 \]
and, as a result, the dual update can be performed efficiently without solving an optimization problem.

%resource consumption is the highest when dual variables are zero. Therefore, to prevent resources from being satuared

\paragraph{Example~\ref{ex:hinge_loss} (Below-Target Consumption).} When $r(a)=\sum_{j=1}^m c_j \min(\rho_j-a_j, \rho_j-t_j)$, then $\cD=\mathbb R_+^m$ and, for $\mu \in \cD$, $r^*(-\mu) =  \mu^\top t + \sum_{j=1}^m (\rho_j - t_j) \max(\mu_j, c_j)$ and $a_j^*(-\mu) = t_j$ if $\mu_j \in [0, c_j)$ and $a_j^*(-\mu) = \rho_j$ if $\mu_j\ge c_j$. In this case, the dual feasible set is the non-negative orthant as in the case of no regularization. Therefore, the projection can be conducted as in Example~\ref{ex:no_regularizer}. The regularizer impacts the algorithm's decision by dynamically adjusting the target resource consumption $a_t$. Recall that the purpose of the below-target regularizer is to reduce resource utilization by granting the decision maker a reward $c_j$ for reducing each unit of resource below a target $t_j$. If the opportunity cost of resources is above $c_j$, then reducing resource consumption is not beneficial and the target is set to $a_t = \rho$. Conversely, if the opportunity costs of resources are below $c_j$, then the target is set to $a_t = t_j$ and the algorithm reduces the resource consumption to match the target. Note that reducing the target to $t_j$ leads to smaller subgradients $(\tg_t)_j$, which, in turn, increases future dual variables. This self-correcting feature of the algorithm guarantees that the target is reduced to $t_j$, which can be costly reward-wise, only when economically beneficial.

\paragraph{Example~\ref{ex:hinge_loss2} (Above-Target Consumption).} When $r(a)=\sum_{j=1}^m c_j \min(a_j, t_j)$, then $\cD=\left\{\mu \in \RR^m \mid \mu \ge -c\right\}$ and, for $\mu \in \cD$, $r^*(-\mu) = c^\top t + \mu^\top t + \sum_{j=1}^m (\rho_j - t_j) \max(\mu_j, 0)$ and $a_j^*(-\mu) = t_j$ if $\mu_j \in [-c_j, 0)$ and $a_j^*(-\mu) = \rho_j$ if $\mu_j\ge0$. In this case, the dual feasible region $\cD$ is the shifted non-negative orthant and subgradient descent (i.e., a squared Euclidean norm) gives a closed-form solution to~\eqref{eq:dual_update}. The objective of the above-target regularizer is to increase resource consumption by offering a unit reward of $c_j$ for each unit closer to the target $t_j$. Because the regularizer encourages resource consumption, the dual variables can be negative and, in particular, $\mu_j \ge -c_j$. If the dual variable of a resource is negative, increasing resource consumption beyond the target is not beneficial and the target is set to $t_j$. When the dual variables are positive, the algorithm behaves like the case of no regularization.

% 	\textbf{The dual update \eqref{eq:dual_update} for Example \ref{ex:minimal_cons}, for Example \ref{ex:hinge_loss} and Example \ref{ex:hinge_loss2}.} The constraints $\cD$ is a simple polyhedron constraint in Example \ref{ex:minimal_cons}, thus \eqref{eq:dual_update} can be computed by solving a simple quadratic program. The constraints $\cD$ are simple box constraints for Example \ref{ex:hinge_loss} and Example \ref{ex:hinge_loss2}, thus weighted subgradient descent gives a closed-form solution to \eqref{eq:dual_update}.}

\vspace{0.2cm}

% \red{Depending on the page limit, we can put these discussions into appendix .}

% When we choose $h(\mu)=$, the dual update \eqref{eq:dual_update} becomes

% \red{Discuss here the update rule for Examples~\ref{ex:minimal_cons} and \ref{ex:minimal_cons2}}

%For example, if the reference function is $h(\mu)=-\sum_i \mu_i\log(\mu_i)$, the dual update \eqref{eq:dual_update} becomes
%    $$ {\mu_{t+1}} = {\mu_t} * \exp(-\eta \tg_t) \ ,
%    $$
%    which recovers the online exponential weights algorithm for solving \eqref{eq:Dual_P};
%if the reference function is $h(\mu)=\frac{1}{2}\|\mu\|^2_2$, the dual update \eqref{eq:dual_update} becomes $${\mu_{t+1}}= \text{Proj}_{\mu\ge 0} \{\mu_{t} - \eta  \tg_t\}\ ,$$
%    which recovers the online subgradient descent method for solving \eqref{eq:Dual_P}.

% For example, if the reference function is $h(\mu)=-\sum_i \mu_i\log(\mu_i)$, the dual update \eqref{eq:dual_update} becomes
%     $$ {\tmu_{t}} = {\mu_t} * \exp(-\eta \tg_t) \ ,
%     $$
%     $$
%     \mu_{t+1} = \arg\min_{\mu\in}
%     $$
%     which recovers the online exponential weights algorithm for solving \eqref{eq:Dual_P};
% if the reference function is $h(\mu)=\frac{1}{2}\|\mu\|^2_2$, the dual update \eqref{eq:dual_update} becomes $${\mu_{t+1}}= \text{Proj}_{\mu\ge 0} \{\mu_{t} - \eta  \tg_t\}\ ,$$
%     which recovers the online subgradient descent method for solving \eqref{eq:Dual_P}.

% \iffalse

\vspace{0.2cm}

\section{Theoretical Results}\label{sec:performance}

% \subsection{General Assumptions}
% In this section, we present some common assumptions required in our analysis.

We analyze the performance of our algorithm under stochastic and adversarial inputs. We shall see that our algorithm is oblivious to the input model and attains good performance under both inputs without knowing in advance which input model it is facing.

\subsection{Stochastic I.I.D. Input Model}\label{sec:iid-model}
In this section, we assume that requests are independent and identically distributed (i.i.d.) from a probability distribution $\cP\in \Delta(\cS)$ that is unknown to the decision maker, where $\Delta(\cS)$ is the space of all probability distributions over the support set $\cS$. We measure the regret of an algorithm as the worst-case difference over distributions in $\Delta(\cS)$, between the expected performance of the benchmark and the algorithm:
    \begin{align*}
    \Regret{A} = \sup_{\cP \in \Delta(\cS)}  \left\{ \EE_{\gamma_t \sim \cP} \left[ \OPT(\vgamma) - R(A|\vgamma) \right] \right\}\,.
    \end{align*}
We say an algorithm is low regret if the regret grows sublinearly with the number of periods. The next theorem presents the worst-case regret bound of Algorithm~\ref{al:sg}.

	\begin{thm}\label{thm:master}		Consider Algorithm \ref{al:sg} with step-size $\eta \ge 0$ and initial solution $\mu_0\in \cD$. Suppose Assumptions 1-2 are satisfied. Then, it holds for any $T\ge 1$ that
		\begin{align}\label{eq:master}
		\begin{split}
		\Regret{A}\le C_1 + C_2 \eta T + \frac {C_3} {\eta}\,.
		\end{split}
		\end{align}
		where $C_1 = \bar b (\bar f + \ubr + L (\ubb + \uba)) / \lbrho$, $C_2 = (\ubb + \uba)^2/ (2\sigma)$, $C_3 = \sup \Big\{ V_h(\mu, \mu_0) : \mu \in \cD, \|\mu\|_1 \le L + C_1/\bar b  \Big\}$ and $\lbrho = \min_{j \in [m]} \rho_j$.
	\end{thm}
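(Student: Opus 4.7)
The plan is to combine weak duality from Proposition~\ref{prop:upper_bound} (applied per iterate, not just at the dual optimum) with the standard regret analysis of weighted projected subgradient descent. The first move is to observe that, since $\mu_t \in \cD$ and $(f_t,b_t)$ is independent of $\cH_{t-1}$, the conditional expectation $\EE[D_t(\mu_t)\mid \cH_{t-1}] = D(\mu_t\mid\cP) \ge D(\mu^*\mid \cP) \ge \OPT(\cP)/T$. Summing and taking expectations yields the master inequality $\OPT(\cP) \le \EE[\sum_t D_t(\mu_t)]$. From here the remaining task is to lower bound $R(A\mid \cP)$ by $\EE[\sum_t D_t(\mu_t)]$ up to the three terms $C_1$, $C_2\eta T$, and $C_3/\eta$.

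The key algebraic identity, immediate from the definitions of $\tilde x_t$ and $a_t$ as the maximizers in the conjugates, is $D_t(\mu_t) = f_t(\tilde x_t) + r(a_t) + \mu_t^\top \tilde g_t$. Summing, applying Jensen's inequality in the form $\sum_t r(a_t) \le T\, r(\bar a)$ with $\bar a = \frac1T\sum_t a_t$, and then using the $L$-Lipschitzness of $r$ to transit from $r(\bar a)$ to $r(\overline{bx})$ with $\overline{bx} = \frac1T\sum_t b_t x_t$, I would arrive at the decomposition
\begin{equation*}
\sum_t D_t(\mu_t) - R(A\mid\cP) \;\le\; \underbrace{\sum_t\bigl(f_t(\tilde x_t) - f_t(x_t)\bigr)}_{\text{null-action loss}} \;+\; L\bigl\lVert \textstyle\sum_t (a_t - b_t x_t)\bigr\rVert_{w,*} \;+\; \sum_t \mu_t^\top \tilde g_t\,,
\end{equation*}
where $\sum_t(a_t - b_t x_t) = -\sum_t \tilde g_t + \sum_{t: x_t = 0} b_t \tilde x_t$. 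The last summand $\sum_t\mu_t^\top \tilde g_t$ is controlled by invoking the OCO bound with the comparator $\mu = 0 \in \cD$ (which is admissible since $r^*(0) = \sup_{a\le \rho} r(a)\le \ubr$), yielding the OCO budget $\tfrac{\|\mu_0\|_w^2}{2\eta} + \tfrac{\eta T (\ubb+\uba)^2}{2}$, half of the $C_2\eta T + C_3/\eta$ contribution.

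The residual terms $\lVert\sum_t \tilde g_t\rVert_{w,*}$ and the null-action count $N$ are the source of the additive constant $C_1$. To bound $N$ (and the telescoped $\sum_t \tilde g_t$), I would invoke weak duality with comparator $\mu^*$, where $\|\mu^*\|_w$ is controlled by a Slater-type argument: since $0 \in \cX$ and $b\cdot 0 = 0$ is strictly feasible, and $D(\mu^*\mid\cP) \le D(0\mid\cP) \le \bar f + \bar r$, pairing this with the lower bound $D(\mu\mid\cP) \ge \lbr + \mu^\top\rho$ obtained by taking $a = \rho$ in $r^*(-\mu)$ gives $\mu^{*\top}\rho \le \bar f + \bar r - \lbr$. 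Translating this to the weighted norm via $\|\mu^*\|_w \le \sqrt{w_{\max}}\|\mu^*\|_1 \le C_1\|w\|_\infty^{1/2}$ (together with a correction of order $L$ absorbing the Lipschitz term) produces the $C_3 = (L + C_1\|w\|_\infty^{1/2})^2 + \|\mu_0\|_w^2$ factor in the OCO bound $\|\mu^* - \mu_0\|_w^2/(2\eta)$.

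The hardest step will be absorbing the null-action loss $(\bar f + L\bar b)N$ into the constant $C_1$ without introducing an explicit stopping-time argument. The plan is to exploit the feasibility constraint $\sum_t b_t x_t \le T\rho$ together with the bound on the per-period consumption $\|b_t \tilde x_t\|_{w,*} \le \ubb$: each null-action period corresponds to a moment when some resource has remaining budget smaller than the current demand, so the total number of null actions is controlled by $T\rho_j$ divided by a typical per-period consumption scale of the binding resource $j$, which upon dividing by $\lbrho = \min_j \rho_j$ and multiplying by the per-period loss $\bar f + \ubr + L(\ubb + \uba) - \lbr$ produces exactly $C_1$. Putting all three contributions together and taking expectations yields the claimed regret bound.
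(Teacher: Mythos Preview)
Your decomposition is headed in the right direction, but two of the terms you create are not controllable at the scale you need, and the paper's proof differs precisely in how it avoids creating them.

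\textbf{The Lipschitz transition for the regularizer is too crude.} After Jensen you upper bound $T\,r(\bar a) - T\,r(\overline{bx})$ by $L\bigl\|\sum_t(a_t - b_t x_t)\bigr\|_{w,*}$. This quantity is $\Theta(T)$ in general: for the max--min regularizer (Example~\ref{ex:minimal_cons2}) one has $a_t=\rho$ for every $t$, while nothing prevents $\sum_t b_t x_t$ from being $o(T)$ under a distribution that makes consumption unattractive, so the norm is $\Theta(T)$ and $L>0$. The paper never applies Lipschitzness to $\bar a - \overline{bx}$. Instead it \emph{absorbs the regularizer gap into the OCO comparator}: it picks $\hat\mu$ with $-\hat\mu\in\partial r(\overline{bx})$, so that the conjugate identity $r^*(-\hat\mu)=r(\overline{bx})+\hat\mu^\top\overline{bx}$ together with $r(a_t)+\hat\mu^\top a_t\le r^*(-\hat\mu)$ gives
\[
\sum_t r(a_t) - T\,r(\overline{bx}) \;\le\; \hat\mu^\top\!\sum_t\bigl(b_t x_t - a_t\bigr) \;=\; -\sum_t w_t(\hat\mu),
\]
which is exactly the linear functional appearing in the OCO bound. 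Lipschitzness of $r$ is then used only to conclude $\|\hat\mu\|_w\le L$, and this enters through $\tfrac{1}{2\eta}\|\mu-\mu_0\|_w^2$, not through a term linear in $T$.

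\textbf{The null-action count is not $O(1)$ and is not bounded, it is cancelled.} Once a resource $j$ is within $\bar b$ of depletion at $\tau_A$, every subsequent period whose tentative action touches resource $j$ is nulled; hence $T-\tau_A$ can be $\Theta(T)$. Your sentence ``the total number of null actions is controlled by $T\rho_j$ divided by a typical per-period consumption'' describes $\tau_A$, not $T-\tau_A$. The paper does not bound $T-\tau_A$; it \emph{cancels} it. By further shifting the OCO comparator to $\mu=\hat\mu+\delta$ with $\delta=(C/\rho_j)e_j$ on the depleted coordinate (this lies in $\cD$ by Lemma~\ref{lem:dual_set}), one gets
\[
\sum_{t\le\tau_A} w_t(\delta)\;=\;\frac{C}{\rho_j}\sum_{t\le\tau_A}\bigl((a_t)_j-(b_t x_t)_j\bigr)\;\le\;\frac{C\bar b}{\rho_j}\;-\;C\,(T-\tau_A),
\]
using $(a_t)_j\le\rho_j$ and the definition of $\tau_A$. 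The $-C(T-\tau_A)$ exactly offsets the $(T-\tau_A)\cdot\bigl(\bar f+\ubr+L(\ubb+\uba)-\lbr\bigr)$ loss from the post-$\tau_A$ periods. What survives is the constant $C\bar b/\lbrho=C_1$ and the contribution $\|\delta\|_w\le C_1\|w\|_\infty^{1/2}$ inside $C_3$.

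A smaller inconsistency: you invoke the OCO bound once with comparator $\mu=0$ and then speak of ``the OCO bound $\|\mu^*-\mu_0\|_w^2/(2\eta)$'' with a different comparator; only one comparator can be used. In the paper the single comparator $\hat\mu+\delta$ does all three jobs (regularizer gap, stopping-time cancellation, and the $C_3/\eta$ term) simultaneously.
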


A few comments on Theorem \ref{thm:master}: First, the previous result implies that, by choosing a step-size of order $\eta \sim T^{-1/2}$, Algorithm~\ref{al:sg} attains regret of order $O(T^{1/2})$ when the length of the horizon and the initial amount of resources are simultaneously scaled. 
% \red{ Second, the third term in~\eqref{eq:master} measures the worst-case distance between the initial solution and a point within a ball centered at the origin of the dual space. This term is finite by our assumptions.} 
Second, Lemma~1 from \citet{ArlottoGurvich2019} implies that one cannot hope to attain regret lower than $\Omega(T^{-1/2})$ under our assumptions. (Their result holds for the case of no regularizer, which is a special case of our setting.) Therefore, Algorithm~\ref{al:sg} attains the optimal order of regret. Third, while we prove the result under the assumption that the regularizer is non-negative a similar result holds when the regularizer takes negative values because the regret definition is invariant to the value shift. 

% \red{Comment/remark on the results when $r=0$ and when $h$ is Euclidean distance?}

% \HL{Have a discussion on other online algorithms and non-constant step-sizes, which replaces $\frac {1} {\eta}\sup_{\mu \in \cD: \|\mu\| \le C_3}  V_h(\mu, \mu_0)$ with $E$?}

The proof of Theorem~\ref{thm:master} combines ideas from \cite{AgrawalDevanue2015fast} and \cite{balseiro2020best} to incorporate hard resource constraints and regularizers, respectively. We prove the result in three steps. Denote by $\tA$  the stopping time corresponding to the first time that a resource is close to be depleted. We first bound from below the cumulative reward of the algorithm up to $\tA$ in terms of the dual objective minus the complementary slackness term $\sum_{t=1}^{\tA} \mu_t^\top(a_t - b_t(x_t)) = \sum_{t=1}^{\tA} \mu_t^\top g_t$. In the second step, we upper bound the complementary slackness term by noting that, until the stopping time $\tA$, the algorithm's decisions are not constrained by resources and, as a result, it implements online mirror descent on the dual function because $g_t$ are subgradients. Using standard results for online mirror descent, we can compare the complementary slackness term to its value evaluated at a fixed static dual solution, i.e., $\mu^\top \sum_{t=1}^{\tA}g_t$ for any dual feasible vector $\mu \in \cD$. We then relate this term to the value of the regularizer and the performance lost when depleting resources to early, i.e., in cases when $\tA < T$. To do so, a key idea involves decomposing the fixed static dual solution as $\mu = \hat \mu + \delta$ where $\delta \ge 0$ and $\hat \mu \in \cD$ is a carefully picked dual feasible point that is determined by the regularizer $r$ (such decomposition is always possible because the positive orthant is in the recession cone of $\cD$). By using the theory of convex conjugates, we can relate $\hat \mu^\top \sum_{t=1}^{\tA}g_t$ to the value of the regularizer. In the third step, we upper bound the optimal reward under full information in terms of the dual function using \eqref{eq:obtain_dual} and then control the performance lost from stopping early at a time at $\tA<T$ in terms of $\delta^\top \sum_{t=1}^{\tA}g_t$ by choosing $\delta$ based on which resource is depleted.

Theorem~\ref{thm:master} readily implies that Algorithm~\ref{al:sg} attains $O(T^{1/2})$ regret for all examples presented in Section~\ref{sec:ex}. The only exceptions are Example~\ref{ex:no_regularizer} with the negative entropy regularizer and  Example~\ref{ex:minimal_cons} with the scaled negative entropy regularizer, which lead to closed-form solutions for the dual update. The issue is that the negative entropy function does not satisfy Assumption~\ref{ass:h} because its strong convexity constant goes to zero as the dual variables go to infinity. Using a similar analysis to that of Proposition~2 of \cite{balseiro2020best}, we can show that the dual variables remain bounded from above in both examples. By restricting the reference function to a box, we can show that the strong convexity constant of the negative entropy remains bounded from below, which implies that Algorithm~\ref{al:sg} attains sublinear regret.

\subsection{Adversarial Input}\label{sec:adversarial}
    
In this section, we assume that requests are arbitrary and chosen adversarially. Unlike the stochastic i.i.d.~input model, regret can be shown to grow linearly with $T$ and it becomes less meaningful to study the order of regret over $T$. Instead, we say that the algorithm $A$ is asymptotically $\alpha$-competitive, for $\alpha \ge 1$, if
    \[
        \lim\sup_{T\rightarrow\infty} \sup_{\vec \gamma \in \cS^T}  \left\{ \frac 1 T \Big( \OPT(\vec \gamma) - \alpha R(A | \vec \gamma) \Big) \right\}\le 0\,.
    \]
    An asymptotic $\alpha$-competitive algorithm asymptotically guarantees fraction of at least $1/\alpha$ of the best performance in hindsight over all possible inputs.%\footnote{While in the definition we assume that the requests $\gamma_T = (\gamma_1,\ldots,\gamma_T)$ are fixed in advance, our results allow requests to be chosen by a nonoblivious or adaptive adversary who does not know the internal randomization of the algorithm.}
    
%In this section, we assume the request $(f_t,b_t)$ at time $t$ is chosen by an adversary, and we look at the worst-case performance over all possible inputs. 

% The competitive ratio $\alpha_0$ measures how resource constrained is the decision maker. For each resource $j \in [m]$, the expression $\max_{x\in\cX} b_j x / \rho_j$ captures the ratio of the highest possible resource consumption to the ``average'' amount of resource available per time period. Theorem~\ref{thm:adversial-no-regularizer} thus implies that the competitive ratio deteriorates as the problem becomes more resource constrained. Relatedly, \cite{mirrokni2012simultaneous} showed that, in the AdWords problem, no algorithm with vanishing regret under stochastic input can attain a finite competitive ratio for adversarial input. Theorem~\ref{thm:adversial-no-regularizer} does not contradict their result because their worst-case instance is such that $\rho_i \rightarrow 0$ as $T\rightarrow \infty$.

In general, the competitive ratio depends on the structure of the regularizer $r$ chosen. The next theorem presents an analysis for general concave regularizers.

\begin{thm}\label{thm:adversial}
Suppose there exists a finite $\alpha\ge 1$ and $p\in \partial r(0)$, such that for any $\mu\in\cD$ it holds
\begin{equation}\label{eq:adv_condition}
    \sup_{(f,b,\cX) \in \cS} \sup_{x\in\cX} \left\{ (\mu + p)^\top b(x) \right\} + r(0)  \le \alpha r^*(-\mu) \ .
\end{equation}
Consider Algorithm~\ref{al:sg} with step-size $\eta$. Then, it holds for any $T\ge 1$ that
		\begin{align*}%\label{eq:master-adver-one}
		\begin{split}
		\OPT(\vec \gamma) - \alpha R(A | \vec \gamma) \le
		C_1 + C_2 \eta T + \frac {C_3} {\eta} \ .
		\end{split}
		\end{align*}
		where $C_1 = \ubb (\ubf+\ubr+\alpha L (\ubb +\uba)+ 2\alpha \|p\|_1\ubb)/\ubrho$, $C_2 = \alpha (\ubb + \uba)^2/(2\sigma)$, and $C_3 = \alpha \sup \Big\{ V_h(\mu, \mu_0) : \mu \in \cD, \|\mu\|_1 \le L + C_1/(\alpha \ubb)   \Big\}$.
% \end{thm}
% Algorithm~\ref{al:sg} is asymptotic $\alpha$-competitive.
\end{thm}

Theorem~\ref{thm:adversial} implies that when the step-size is of order $\eta \sim T^{-1/2}$, Algorithm~\ref{al:sg} is asymptotic $\alpha$-competitive. Condition \eqref{eq:adv_condition} can be shown to hold with finite $\alpha$ if $r(0)>0$ or if $r(0)=0$ and $r$ is locally linear around zero. We next discuss the value of the competitive ratio for the examples presented in Section~\ref{sec:ex}. Proofs of all mathematical statements are available in Appendix~\ref{sec:cr-examples}.

\paragraph{Example~\ref{ex:no_regularizer} (No Regularizer).} When $r(a) = 0$, then Algorithm \ref{al:sg} is $\max\{\alpha_0,1\}$-competitive with $\alpha_0 = \sup_{(f, b, \cX) \in \cS} \sup_{j \in [m], x\in\cX} b_j(x) / \rho_j$. This recovers a result in \cite{balseiro2020best}, which is unimprovable without further assumptions on the input. The competitive ratio $\alpha_0$ captures the relative scarcity of resources by comparing, for each resource, the worst-case consumption of any request as given by $\sup_{x\in\cX} b_j(x)$ to the average resource availability $\rho_j = B_j / T$. Naturally, the competitive ratio degrades as resources become scarcer.

\paragraph{Example~\ref{ex:minimal_cons} (Load Balancing).} When $r(a) = \lambda \min_{j \in [m]} \big((\rho_j - a_j)/\rho_j \big)$, then Algorithm \ref{al:sg} is $(\alpha_0 + 1)$-competitive. Notably, introducing a load balancing regularized only increases the competitive ratio by one and the competitive ratio does not depend on the tradeoff parameter $\lambda$. 
%We conjecture that the resulting competitive ratio is tight for the load balancing regularizer. \blue{How about we say the competitive ratio is almost tight, because it is only changes by 1?}

\paragraph{Examples~\ref{ex:hinge_loss} and~\ref{ex:hinge_loss2} (Below- and Above-Target Consumption).} In this case, the competitive ratio is $\alpha = \max\{\sup_{(f, b, \cX) \in \cS} \sup_{j \in [m], x\in\cX} b_j(x) / t_j, 1\}$. The competitive ratio is similar to that of no regularization with the exception that resource scarcity is measured with respect to the targets $t$ instead of the resource constraint vector $\rho$. Because $t \le \rho$, the introduction of these regularizers deteriorates the competitive ratios relative to the case of no regularizer. Interestingly, the competitive ratios do not depend on the unit rewards $c$ in the regularizer.

% % Unfortunately, Theorem~\ref{sec:adversarial} does not lead to finite competitive ratio for Example~\ref{ex:minimal_cons}. It some special cases (for example, when there is no reward and \red{insert here conditional here}), it possible to show that our algorithm attains a fixed competitive ratio. We conjecture that it should be possible to show that Algorithm~\ref{al:sg} leads to good performance for a min-max fairness regularizer.

\paragraph{Example 2 (Max-min Fairness)} Unfortunately, Theorem~\ref{sec:adversarial} does not lead to a finite competitive ratio for the max-min fairness regularizer. For this particular regularizer, we can provide an ad hoc competitive ratio analysis from first principles.

\begin{thm}\label{thm:adv-fairness}
Consider Algorithm \ref{al:sg} with step-size $\eta$ for regularized online allocation problem with max-min fairness (i.e., $r(a) = \lambda \min_{j} (a_j/\rho_j)$). Then:
\begin{enumerate}
    %\item If the reward function $f_t(x)=0$, the consumption function $b_t(x)=x$ and the action space $\cX_t=\Delta_m$ (namely we consider maximizing the min fairness for online matching), then Algorithm \ref{al:sg} is $1$-competitive.
    \item If the reward function $f_t(x)$ satisfies Assumption \ref{ass:p}, the consumption function is $b_t(x)=x$, and the action space is the unit simplex $\cX_t=\{ x\in [0,1]^m : \sum_{j=1}^n x_j \le 1 \}$, then Algorithm~\ref{al:sg} is $\pran{\max_j \left\{1/\rho_j\right\}+ 1}$-competitive.
    \item If the reward function $f_t(x)$, the consumption function $b_t(x)$, and the action space $\cX_t$ satisfy Assumption~\ref{ass:p}, then Algorithm~\ref{al:sg} is $\pran{\max\{\beta_1, \beta_2\}+1}$-competitive, where $\beta_1, \beta_2>0$ satisfy $\rho\in\beta_1 b(\cX)$ and $b( x) \le \beta_2 \rho$ for every request $(f,b,\cX) \in \cS$.
\end{enumerate}
\end{thm}

The first part of Theorem~\ref{thm:adv-fairness} provides an analysis for online matching problems in which the objective is to maximize reward plus the max-min fairness regularizer. Online matching (without regularization) is a widely studied problem in the computer science and operations research literature (see, e.g., \citealt{karp1990optimal,mehta2013online}). In online matching, the decision maker needs to match the arriving request to one of $m$ resources and each assignment consumes one unit of resource. The reward function $f_t(x)$ is arbitrary. Here, we consider a variation in which the objective involves maximizing the revenue together with the max-min fairness of the allocation. Notice that in the case of no regularization, the competitive ratio for this problem is $\alpha_0 = \max_j 1/\rho_j$. Therefore, the introduction of the max-min fairness regularizer increases the competitive ratio by one and, as before, the competitive ratio is independent of the tradeoff parameter $\lambda$. 

The second part of the theorem extends the result to generic online allocation problems with the max-min fairness regularizer. The parameter $\beta_2$ is similar to $\alpha_0$ in that it compares the worst-case resource consumption of any request to the average consumption. The parameter $\beta_1$ is more subtle and measures the distance of the resource constraint vector $\rho$ to the set of achievable resource consumption $b(\cX) = \{b(x)\}_{x \in \cX}$ akin to the Minkoswki functional. That is, it captures how much we need to scale the set $\bigcap_{(f,b,\cX)\in \cS} b(\cX)$ for $\rho$ to lie in it. The parameter $\beta_1$ is guaranteed to be bounded whenever $b_j(x) > 0$ for every request. Unfortunately, if $b_j(x) = 0$ for some request, Theorem~\ref{thm:adv-fairness} does not provide a bounded competitive ratio. Intuitively, in such cases, the adversary can make max-min fairness large by first offering requests that consume all resources and then switching to requests that do not consume resources that were not allocated in the first stage. We conjecture that when the conditions of Theorem~\ref{thm:adv-fairness} do not hold, no algorithm that attains vanishing regret for stochastic input can achieve bounded competitive ratios in the adversarial case. 
% Finally, our result for general allocation problems gives weaker competitive ratios for the online matching problem than the one stated in the first part of Theorem~\ref{thm:adv-fairness}.

\section{Numerical Experiments}
% \subsection{Min-max Regularizer}
In this section, we present numerical experiments on a display advertisement allocation application regularized by max-min fairness on consumption (Example \ref{ex:minimal_cons2}).

\textbf{Dataset.} We utilize the display advertisement dataset introduced in \cite{balseiro2014yield}. They consider a publisher who has agreed to deliver ad slots (the requests) to different advertisers (the resources) so as to maximize the cumulative click-through rates (the reward) of the assignment. In their paper, they estimate click-through rates using mixtures of log-normal distributions. We adopt their parametric model as a generative model and sample requests from their estimated distributions. We consider publisher 2 from their dataset, which has $m=12$ advertisers. Furthermore, in our experiments, we rescale the budget $\rho$ so that $\sum_{j=1}^m \rho_j=1.5$ in order to make sure that the max-min fairness is strictly less than $1$. This implies that the maximal fairness value is $2/3$.

\textbf{Regularized Online Problem.} The goal here is to design an online allocation algorithm that maximizes the total expected click-through rate with a max-min fairness regularizer on resource consumption as in Example \ref{ex:minimal_cons2}. Advertiser $j \in [m]$ can be assigned at most $T \rho_j$ ad slots and the decision variables lie in the simplex $\cX = \{ x \in \RR_+^m : \sum_{j=1}^m x_j \le 1\}$. Denoting by $q_t \in \RR^m$ the click-through rate of the $t$-th ad slot $T$, we have that the benchmark is given by:
\begin{align}\label{eq:poi_numerical}
\begin{split}
\ \ \max_{x : x_t \in \cX_t } &  \sum_{t=1}^T q_t^\top x_t + \lambda \min_{j=1,\ldots,m} \pran{\sum_{t=1}^T (x_t)_j / \rho_j} \\
\text{s.t.} & \sum_{t=1}^T x_t  \le  T\rho\ ,
\end{split}
\end{align}
where $\lambda$ is the weight of the regularizer. In the experiments, we consider the regularization levels $\lambda\in\{0, 0.1, 0.01, 0.001, 0.0001\}$ and lengths of horizon $T\in \{10^2, 10^3, 2 \cdot 10^3, \ldots, 10^4\}$.

\textbf{Implementation Details.}  In the numerical experiments, we implemented Algorithm~\ref{al:sg} with weights $w_j=\rho_j^2$ and step-size $0.01 \cdot T^{-1/2}$. The dual update \eqref{eq:dual_update} is computed by solving a convex quadratic program as stated in Section \ref{sec:algorithm} using \emph{cvxpy}~\citep{diamond2016cvxpy}. For each regularization level $\lambda$ and time horizon $T$, we randomly choose $T$ samples from their dataset that are fed to Algorithm~\ref{al:sg} sequentially. In order to compute the regret, we utilize the dual objective evaluated at the average dual $D(\frac{1}{T}\sum_{t=1}^T \mu_t)$ as an upper bound to the benchmark. We report the average cumulative reward, the average max-min consumption fairness, and the average regret of $100$ independent trials in Figure~\ref{fig:numerical}. Ellipsoids in Figure~\ref{fig:numerical}~(b) give 95\% confidence regions for the point estimates.

\begin{figure}
	\begin{subfigure}{.49\textwidth}
		\centering
		\includegraphics[width=0.95\textwidth]{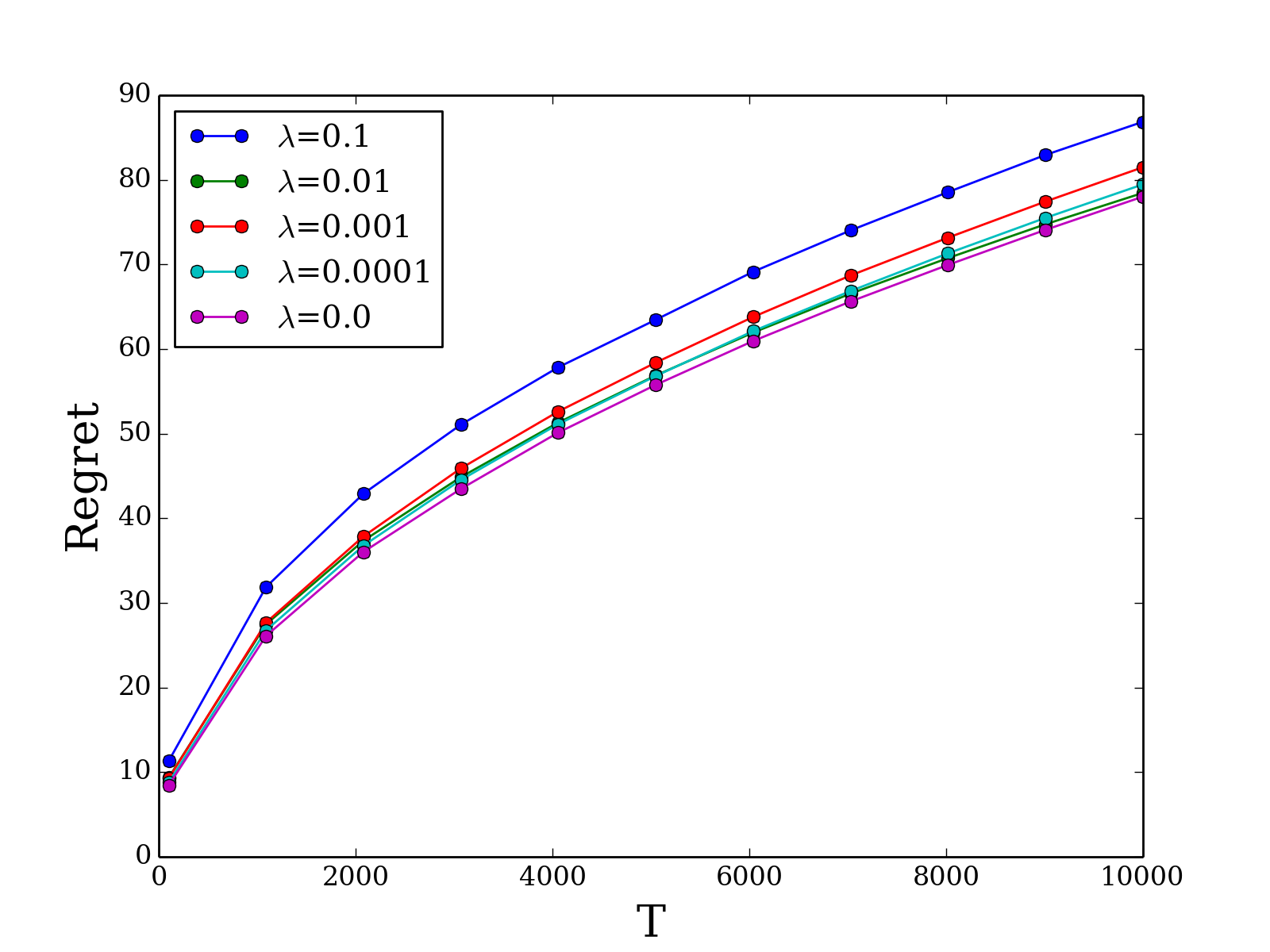}
		% \caption{Caption}
		% \label{fig:regret}
	\end{subfigure}
	\begin{subfigure}{.49\textwidth}
		\centering
		\includegraphics[width=0.95\textwidth]{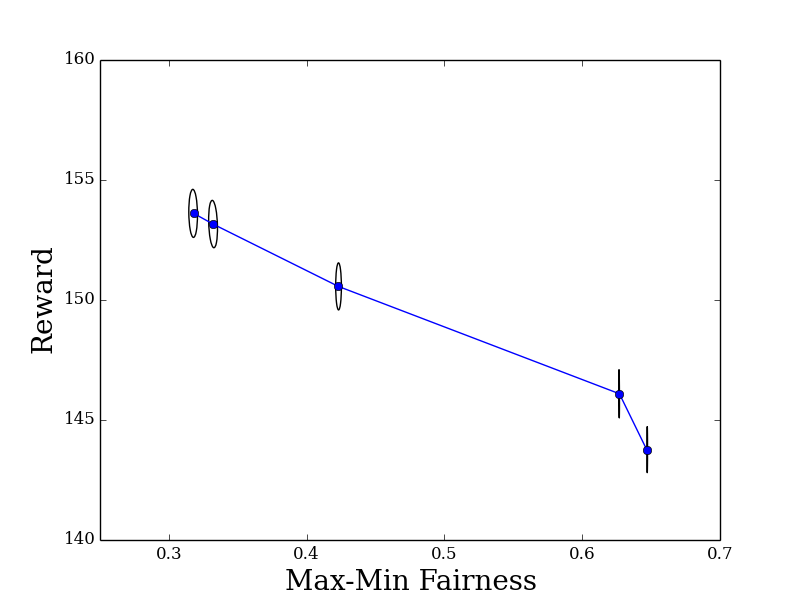}
		% \caption{Caption}
		% \label{fig:reward-regularization}
	\end{subfigure}
	\caption{\small(a) Plot of the regret versus the length of horizon $T$ for the regularized online allocation problem \eqref{eq:poi_numerical} with different regularization levels. (b) Plot of the reward  $\sum_{t=1}^T q_t x_t$ versus the max-min fairness $\min_{j=1,\ldots,m} \pran{\sum_{t=1}^T (x_t)_j / T \rho_j}$. Dots from left to right corresponds to regularization levels $\lambda=0.0, 0.0001, 0.001, 0.01, 0.1$, respectively.}
	\label{fig:numerical}
\end{figure}

\textbf{Discussion.}
Consistent with Theorem \ref{thm:master}, Figure \ref{fig:numerical} (a) suggests that regret grows at rate $O(\sqrt{T})$ for all regularization levels. Figure \ref{fig:numerical} (b) presents the trade-off between reward and fairness with the 95\% confidence ellipsoid. In particular, we can double the max-min fairness while sacrificing only about $4\%$ of the reward by choosing $\lambda=0.01$. This showcases that fairness can be significantly improved (in particular given that the highest possible fairness is $2/3$) by solving the regularized problem with a small amount of reward reduction. 

\section{Conclusion and Future Directions}
In this paper, we introduce the \emph{regularized online allocation problem}, a novel variant of the online allocation problem that allows for regularization on resource consumption. We present multiple examples to showcase how the regularizer can help attain desirable properties, such as fairness and load balancing, and present a dual online mirror descent algorithm for solving this problem with low regret. The introduction of a regularizer impacts the geometry of the dual feasible region. By suitably choosing the reference function of mirror descent, we can adapt the algorithm to the structure of the dual feasible region and, in many cases, obtain dual updates in closed form.

Future directions include extending the results in this work to other inputs such as non-stationary stochastic models. Another direction is to consider other regularizers and develop corresponding adversarial guarantees using our framework. Finally, it is worth exploring whether the competitive ratios we provide are tight (either unimprovable by our algorithm or by any other algorithm) for the regularizers we explore in this paper. 
% \blue{They are almost tight (there may be a constant at different places.)}

\bibliographystyle{plainnat}
{\small{\bibliography{references,Lu-papers}}}

\begin{thebibliography}{52}
\providecommand{\natexlab}[1]{#1}
\providecommand{\url}[1]{\texttt{#1}}
\expandafter\ifx\csname urlstyle\endcsname\relax
  \providecommand{\doi}[1]{doi: #1}\else
  \providecommand{\doi}{doi: \begingroup \urlstyle{rm}\Url}\fi

\bibitem[Agrawal and Devanur(2015)]{AgrawalDevanue2015fast}
Shipra Agrawal and Nikhil~R. Devanur.
\newblock Fast algorithms for online stochastic convex programming.
\newblock In \emph{Proceedings of the Twenty-Sixth Annual ACM-SIAM Symposium on
  Discrete Algorithms}, SODA ’15, page 1405–1424, USA, 2015. Society for
  Industrial and Applied Mathematics.

\bibitem[Agrawal et~al.(2014)Agrawal, Wang, and Ye]{Agrawal2014OR}
Shipra Agrawal, Zizhuo Wang, and Yinyu Ye.
\newblock A dynamic near-optimal algorithm for online linear programming.
\newblock \emph{Operations Research}, 62\penalty0 (4):\penalty0 876--890, 2014.

\bibitem[Agrawal et~al.(2018)Agrawal, Zadimoghaddam, and
  Mirrokni]{agrawal2018proportional}
Shipra Agrawal, Morteza Zadimoghaddam, and Vahab Mirrokni.
\newblock Proportional allocation: Simple, distributed, and diverse matching
  with high entropy.
\newblock In \emph{International Conference on Machine Learning}, pages
  99--108, 2018.

\bibitem[Ahmed et~al.(2017)Ahmed, Dickerson, and Fuge]{ahmed2017diverse}
Faez Ahmed, John~P Dickerson, and Mark Fuge.
\newblock Diverse weighted bipartite b-matching.
\newblock In \emph{Proceedings of the 26th International Joint Conference on
  Artificial Intelligence}, pages 35--41, 2017.

\bibitem[Arlotto and Gurvich(2019)]{ArlottoGurvich2019}
Alessandro Arlotto and Itai Gurvich.
\newblock Uniformly bounded regret in the multisecretary problem.
\newblock \emph{Stochastic Systems}, 9\penalty0 (3):\penalty0 231--260, 2019.

\bibitem[Azar et~al.(2010)Azar, Buchbinder, and Jain]{azar2010allocate}
Yossi Azar, Niv Buchbinder, and Kamal Jain.
\newblock How to allocate goods in an online market?
\newblock In \emph{European Symposium on Algorithms}, pages 51--62. Springer,
  2010.

\bibitem[Badanidiyuru et~al.(2018)Badanidiyuru, Kleinberg, and
  Slivkins]{badanidiyuru2018bwk}
Ashwinkumar Badanidiyuru, Robert Kleinberg, and Aleksandrs Slivkins.
\newblock Bandits with knapsacks.
\newblock \emph{J. ACM}, 65\penalty0 (3), March 2018.

\bibitem[Ball and Queyranne(2009)]{ball2009toward}
Michael~O. Ball and Maurice Queyranne.
\newblock Toward robust revenue management: Competitive analysis of online
  booking.
\newblock \emph{Operations Research}, 57\penalty0 (4):\penalty0 950--963, 2009.

\bibitem[Balseiro et~al.(2020)Balseiro, Lu, and Mirrokni]{balseiro2020best}
Santiago Balseiro, Haihao Lu, and Vahab Mirrokni.
\newblock The best of many worlds: Dual mirror descent for online allocation
  problems.
\newblock \emph{arXiv preprint arXiv:2011.10124}, 2020.

\bibitem[Balseiro et~al.(2021)Balseiro, Lu, and
  Mirrokni]{balseiro2021regularized}
Santiago Balseiro, Haihao Lu, and Vahab Mirrokni.
\newblock Regularized online allocation problems: Fairness and beyond.
\newblock In \emph{International Conference on Machine Learning}, pages
  630--639. PMLR, 2021.

\bibitem[Balseiro et~al.(2014)Balseiro, Feldman, Mirrokni, and
  Muthukrishnan]{balseiro2014yield}
Santiago~R Balseiro, Jon Feldman, Vahab Mirrokni, and Shan Muthukrishnan.
\newblock Yield optimization of display advertising with ad exchange.
\newblock \emph{Management Science}, 60\penalty0 (12):\penalty0 2886--2907,
  2014.

\bibitem[Bansal and Sviridenko(2006)]{bansal2006santa}
Nikhil Bansal and Maxim Sviridenko.
\newblock The santa claus problem.
\newblock In \emph{Proceedings of the thirty-eighth annual ACM symposium on
  Theory of computing}, pages 31--40, 2006.

\bibitem[Bateni et~al.(2018)Bateni, Chen, Ciocan, and Mirrokni]{bateni2018fair}
Mohammad~Hossein Bateni, Yiwei Chen, Dragos Ciocan, and Vahab Mirrokni.
\newblock Fair resource allocation in a volatile marketplace.
\newblock \emph{Available at SSRN 2789380}, 2018.

\bibitem[Bertsekas(2009)]{bertsekas2009convex}
Dimitri~P Bertsekas.
\newblock \emph{Convex optimization theory}.
\newblock Athena Scientific Belmont, 2009.

\bibitem[Bertsimas et~al.(2011)Bertsimas, Farias, and
  Trichakis]{bertsimas2011price}
Dimitris Bertsimas, Vivek~F Farias, and Nikolaos Trichakis.
\newblock The price of fairness.
\newblock \emph{Operations research}, 59\penalty0 (1):\penalty0 17--31, 2011.

\bibitem[Bertsimas et~al.(2012)Bertsimas, Farias, and
  Trichakis]{bertsimas2012efficiency}
Dimitris Bertsimas, Vivek~F Farias, and Nikolaos Trichakis.
\newblock On the efficiency-fairness trade-off.
\newblock \emph{Management Science}, 58\penalty0 (12):\penalty0 2234--2250,
  2012.

\bibitem[Boyd et~al.(2004)Boyd, Boyd, and Vandenberghe]{boyd2004convex}
Stephen Boyd, Stephen~P Boyd, and Lieven Vandenberghe.
\newblock \emph{Convex optimization}.
\newblock Cambridge university press, 2004.

\bibitem[Bubeck(2015)]{bubeck2015convex}
S.~Bubeck.
\newblock Convex optimization: Algorithms and complexity.
\newblock \emph{Foundations and Trends{\textregistered} in Machine Learning},
  8\penalty0 (3-4):\penalty0 231--357, 2015.

\bibitem[Buchbinder et~al.(2007)Buchbinder, Jain, and
  Naor]{buchbinder2007primaldual}
Niv Buchbinder, Kamal Jain, and Joseph~(Seffi) Naor.
\newblock Online primal-dual algorithms for maximizing ad-auctions revenue.
\newblock In Lars Arge, Michael Hoffmann, and Emo Welzl, editors,
  \emph{Algorithms -- ESA 2007}, pages 253--264, Berlin, Heidelberg, 2007.
  Springer Berlin Heidelberg.

\bibitem[Cain~Miller(2015)]{cainmiller2015}
Claire Cain~Miller.
\newblock When algorithms discriminate.
\newblock \emph{The New York Times}, 2015.
\newblock URL
  \url{https://www.nytimes.com/2015/07/10/upshot/when-algorithms-discriminate.html}.
\newblock Accessed on 2021-10-19.

\bibitem[Celli et~al.(2021)Celli, Colini{-}Baldeschi, Kroer, and
  Sodomka]{celli2021parity}
Andrea Celli, Riccardo Colini{-}Baldeschi, Christian Kroer, and Eric Sodomka.
\newblock The parity ray regularizer for pacing in auction markets.
\newblock 2021.
\newblock URL \url{https://arxiv.org/abs/2106.09503}.

\bibitem[Cheung et~al.(2020)Cheung, Lyu, Teo, and Wang]{cheung2020online}
Wang~Chi Cheung, Guodong Lyu, Chung-Piaw Teo, and Hai Wang.
\newblock Online planning with offline simulation.
\newblock \emph{Available at SSRN 3709882}, 2020.

\bibitem[Devanur and Hayes(2009)]{DevanurHayes2009}
Nikhil~R. Devanur and Thomas~P. Hayes.
\newblock The adwords problem: online keyword matching with budgeted bidders
  under random permutations.
\newblock In \emph{Proceedings of the 10th ACM conference on Electronic
  commerce}, EC '09, pages 71--78. ACM, 2009.

\bibitem[Devanur and Jain(2012)]{devanur2012concave}
Nikhil~R. Devanur and Kamal Jain.
\newblock Online matching with concave returns.
\newblock In \emph{Proceedings of the Forty-Fourth Annual ACM Symposium on
  Theory of Computing}, STOC '12, page 137–144, New York, NY, USA, 2012.
  Association for Computing Machinery.
\newblock ISBN 9781450312455.

\bibitem[Devanur et~al.(2019)Devanur, Jain, Sivan, and
  Wilkens]{Devanur2019near}
Nikhil~R. Devanur, Kamal Jain, Balasubramanian Sivan, and Christopher~A.
  Wilkens.
\newblock Near optimal online algorithms and fast approximation algorithms for
  resource allocation problems.
\newblock \emph{J. ACM}, 66\penalty0 (1), jan 2019.

\bibitem[Diamond and Boyd(2016)]{diamond2016cvxpy}
Steven Diamond and Stephen Boyd.
\newblock Cvxpy: A python-embedded modeling language for convex optimization.
\newblock \emph{The Journal of Machine Learning Research}, 17\penalty0
  (1):\penalty0 2909--2913, 2016.

\bibitem[Dickerson et~al.(2019)Dickerson, Sankararaman, Srinivasan, and
  Xu]{dickerson2019balancing}
John~P Dickerson, Karthik~Abinav Sankararaman, Aravind Srinivasan, and Pan Xu.
\newblock Balancing relevance and diversity in online bipartite matching via
  submodularity.
\newblock In \emph{Proceedings of the AAAI Conference on Artificial
  Intelligence}, volume~33, pages 1877--1884, 2019.

\bibitem[Eghbali and Fazel(2016)]{eghbali2016designing}
Reza Eghbali and Maryam Fazel.
\newblock Designing smoothing functions for improved worst-case competitive
  ratio in online optimization.
\newblock In \emph{Advances in Neural Information Processing Systems}, pages
  3287--3295, 2016.

\bibitem[Emanuel et~al.(2020)Emanuel, Persad, Upshur, Thome, Parker, Glickman,
  Zhang, Boyle, Smith, and Phillips]{emanuel2020fair}
Ezekiel~J. Emanuel, Govind Persad, Ross Upshur, Beatriz Thome, Michael Parker,
  Aaron Glickman, Cathy Zhang, Connor Boyle, Maxwell Smith, and James~P.
  Phillips.
\newblock Fair allocation of scarce medical resources in the time of covid-19.
\newblock \emph{New England Journal of Medicine}, 382\penalty0 (21):\penalty0
  2049--2055, 2020.

\bibitem[Feldman et~al.(2009)Feldman, Korula, Mirrokni, Muthukrishnan, and
  P\'{a}l]{Feldman2009}
Jon Feldman, Nitish Korula, Vahab Mirrokni, S.~Muthukrishnan, and Martin
  P\'{a}l.
\newblock Online ad assignment with free disposal.
\newblock In \emph{Proceedings of the 5th International Workshop on Internet
  and Network Economics}, WINE '09, pages 374--385. Springer-Verlag, 2009.

\bibitem[Feldman et~al.(2010)Feldman, Henzinger, Korula, Mirrokni, and
  Stein]{Feldman2010}
Jon Feldman, Monika Henzinger, Nitish Korula, Vahab~S. Mirrokni, and Cliff
  Stein.
\newblock Online stochastic packing applied to display ad allocation.
\newblock In \emph{Proceedings of the 18th annual European conference on
  Algorithms: Part I}, ESA'10, pages 182--194. Springer-Verlag, 2010.

\bibitem[Grigas et~al.(2021)Grigas, Lobos, Wen, and Lee]{grigas2021optimal}
Paul Grigas, Alfonso Lobos, Zheng Wen, and Kuang-Chih Lee.
\newblock Optimal bidding, allocation, and budget spending for a demand-side
  platform with generic auctions.
\newblock \emph{Allocation, and Budget Spending for a Demand-Side Platform with
  Generic Auctions (May 7, 2021)}, 2021.

\bibitem[Hazan et~al.(2016)]{hazan2016introduction}
Elad Hazan et~al.
\newblock Introduction to online convex optimization.
\newblock \emph{Foundations and Trends{\textregistered} in Optimization},
  2\penalty0 (3-4):\penalty0 157--325, 2016.

\bibitem[Jasin(2015)]{Jasin2015unknown}
Stefanus Jasin.
\newblock Performance of an lp-based control for revenue management with
  unknown demand parameters.
\newblock \emph{Operations Research}, 63\penalty0 (4):\penalty0 909--915, 2015.

\bibitem[Jenatton et~al.(2016)Jenatton, Huang, Csiba, and
  Archambeau]{jenatton2016online}
Rodolphe Jenatton, Jim Huang, Dominik Csiba, and Cedric Archambeau.
\newblock Online optimization and regret guarantees for non-additive long-term
  constraints.
\newblock \emph{arXiv preprint arXiv:1602.05394}, 2016.

\bibitem[Jiang et~al.(2019)Jiang, Wang, and Zhang]{jiang2019achieving}
Jiashuo Jiang, Shixin Wang, and Jiawei Zhang.
\newblock Achieving high individual service-levels without safety stock?
  optimal rationing policy of pooled resources.
\newblock \emph{Optimal Rationing Policy of Pooled Resources (May 2, 2019). NYU
  Stern School of Business}, 2019.

\bibitem[Karp et~al.(1990)Karp, Vazirani, and Vazirani]{karp1990optimal}
Richard~M Karp, Umesh~V Vazirani, and Vijay~V Vazirani.
\newblock An optimal algorithm for on-line bipartite matching.
\newblock In \emph{Proceedings of the twenty-second annual ACM symposium on
  Theory of computing}, pages 352--358, 1990.

\bibitem[Li and Ye(2019)]{LiYe2019online}
Xiaocheng Li and Yinyu Ye.
\newblock Online linear programming: Dual convergence, new algorithms, and
  regret bounds.
\newblock 2019.

\bibitem[Li et~al.(2020)Li, Sun, and Ye]{li2020simple}
Xiaocheng Li, Chunlin Sun, and Yinyu Ye.
\newblock Simple and fast algorithm for binary integer and online linear
  programming.
\newblock \emph{arXiv preprint arXiv:2003.02513}, 2020.

\bibitem[Lien et~al.(2014)Lien, Iravani, and Smilowitz]{lien2014sequential}
Robert~W. Lien, Seyed M.~R. Iravani, and Karen~R. Smilowitz.
\newblock Sequential resource allocation for nonprofit operations.
\newblock \emph{Operations Research}, 62\penalty0 (2):\penalty0 301--317, 2014.

\bibitem[Ma and Simchi-Levi(2020)]{ma2021algorithms}
Will Ma and David Simchi-Levi.
\newblock Algorithms for online matching, assortment, and pricing with tight
  weight-dependent competitive ratios.
\newblock \emph{Operations Research}, 68\penalty0 (6):\penalty0 1787--1803,
  2020.

\bibitem[Ma and Xu(2020)]{ma2020group}
Will Ma and Pan Xu.
\newblock Group-level fairness maximization in online bipartite matching.
\newblock \emph{arXiv preprint arXiv:2011.13908}, 2020.

\bibitem[Manshadi et~al.(2021)Manshadi, Niazadeh, and
  Rodilitz]{mansahdi2021fair}
Vahideh Manshadi, Rad Niazadeh, and Scott Rodilitz.
\newblock Fair dynamic rationing.
\newblock EC '21, page 694–695, New York, NY, USA, 2021. Association for
  Computing Machinery.
\newblock ISBN 9781450385541.

\bibitem[Mehta(2013)]{mehta2013online}
Aranyak Mehta.
\newblock Online matching and ad allocation.
\newblock \emph{Foundations and Trends{\textregistered} in Theoretical Computer
  Science}, 8\penalty0 (4):\penalty0 265--368, 2013.

\bibitem[Mehta et~al.(2007)Mehta, Saberi, Vazirani, and
  Vazirani]{Mehta2007JACM}
Aranyak Mehta, Amin Saberi, Umesh Vazirani, and Vijay Vazirani.
\newblock Adwords and generalized online matching.
\newblock \emph{J. ACM}, 54\penalty0 (5):\penalty0 22–es, October 2007.
\newblock ISSN 0004-5411.

\bibitem[Mirrokni et~al.(2012)Mirrokni, Gharan, and
  Zadimoghaddam]{mirrokni2012simultaneous}
Vahab~S. Mirrokni, Shayan~Oveis Gharan, and Morteza Zadimoghaddam.
\newblock \emph{Simultaneous Approximations for Adversarial and Stochastic
  Online Budgeted Allocation}, pages 1690--1701.
\newblock 2012.

\bibitem[Mo and Walrand(2000)]{mo2000fair}
Jeonghoon Mo and Jean Walrand.
\newblock Fair end-to-end window-based congestion control.
\newblock \emph{IEEE/ACM Transactions on networking}, 8\penalty0 (5):\penalty0
  556--567, 2000.

\bibitem[Nanda et~al.(2020)Nanda, Xu, Sankararaman, Dickerson, and
  Srinivasan]{nanda2020balancing}
Vedant Nanda, Pan Xu, Karthik~Abhinav Sankararaman, John Dickerson, and Aravind
  Srinivasan.
\newblock Balancing the tradeoff between profit and fairness in rideshare
  platforms during high-demand hours.
\newblock In \emph{Proceedings of the AAAI Conference on Artificial
  Intelligence}, volume~34, pages 2210--2217, 2020.

\bibitem[Nash~Jr(1950)]{nash1950bargaining}
John~F Nash~Jr.
\newblock The bargaining problem.
\newblock \emph{Econometrica: Journal of the Econometric Society}, pages
  155--162, 1950.

\bibitem[Shalev-Shwartz et~al.(2012)]{shalev2012online}
Shai Shalev-Shwartz et~al.
\newblock Online learning and online convex optimization.
\newblock \emph{Foundations and Trends{\textregistered} in Machine Learning},
  4\penalty0 (2):\penalty0 107--194, 2012.

\bibitem[Tan et~al.(2020)Tan, Sun, Leon-Garcia, Wu, and
  Tsang]{tan2020mechanism}
Xiaoqi Tan, Bo~Sun, Alberto Leon-Garcia, Yuan Wu, and Danny~HK Tsang.
\newblock Mechanism design for online resource allocation: A unified approach.
\newblock In \emph{Abstracts of the 2020 SIGMETRICS/Performance Joint
  International Conference on Measurement and Modeling of Computer Systems},
  pages 11--12, 2020.

\bibitem[Xu and Li(2013)]{xu2013dynamic}
Hong Xu and Baochun Li.
\newblock Dynamic cloud pricing for revenue maximization.
\newblock \emph{IEEE Transactions on Cloud Computing}, 1\penalty0 (2):\penalty0
  158--171, 2013.

\end{thebibliography}

\newpage
\appendix
\setstretch{1}

\section{Proofs in Section~\ref{sec:dual-problem}}
\subsection{Proof of Lemma~\ref{lem:mu_negative}}
\begin{proof}

We prove each part at a time.

\textbf{Part (i).} Convexity trivially follows because dual problems are always convex (see, e.g., Proposition 4.1.1 of \citealt{bertsekas2009convex}). Suppose $\mu\in \cD$, namely $\max_{a\le \rho} \{r(a)+\mu^\top a\} < +\infty$. Then it holds for any $e\in \RR^d_+$ and $\lambda>0$ that
\begin{equation*}
    \max_{a\le \rho} \{r(a)+(\mu+\lambda e)^\top a\} \le \max_{a\le \rho} \{r(a)+\mu^\top a\} + \lambda e^\top \rho < +\infty \ ,
\end{equation*}
thus $\mu+\lambda e \in \cD$, which finishes the proof by definition of recession cone.

\textbf{Part (ii).} With $\mu=L e$, it holds for any $a\le \rho$ that
$$
r(a)+ \mu^\top a \le r(\rho) + \mu^\top \rho + L\|\rho-a\|_{\infty} - L e^\top (\rho-a) \le r(\rho) + \mu^\top \rho \ ,
$$
where the first inequality follows from Lipschitz continuity and the second from $\|x\|_\infty \le \|x\|_1$ for every vector $x$. Thus,  $\arg\min_{a\in\cD} r(a)+ \mu^\top a=\rho$, which finishes the proof by definition of $\cD$. \qed

\textbf{Part (iii).} Suppose there is an $\mu\in\cD$, such that $\sum_{j}(\mu_j)^- > L$. Fix $\lambda > 0$. Define $a$ such that $a_j=-\lambda$ for every $j\in\{j:\mu_j\le 0\}$ and $a_j=0$ otherwise. Then it holds that 
$$
r(a)+\mu^\top a \ge r(0)-L\|a\|_{\infty} + \mu^\top a = r(0)+ \lambda\left(\sum_{j}(\mu_j)^- -L\right) \ ,
$$
which goes to $+\infty$ when $\lambda\rightarrow +\infty$ by noticing $\sum_{j}(\mu_j)^- > L$. Thus $\mu \not\in \cD$, which finishes the proof by contradiction. 

\textbf{Part (iv).} We prove the result by contradiction. Let $a \le \rho$ be an optimal solution with $a_j < \rho_j$. Consider the solution $\tilde a$ with $a_j = \rho_j$ and $\tilde a_i = a_i$ for $i\neq j$. Because these two solutions only differ in the $j$-th component, we obtain using Lipschitz continuity
$$
r(a)+ \mu^\top a \le r(\tilde a) + \mu^\top \tilde a + L\|\tilde a-a\|_{\infty} - \mu_j (\rho_j-a_j) < r(\tilde a) + \mu^\top \tilde a \,
$$
where the second inequality follows because $\mu_j > L$ and $\| \tilde a - a\|_\infty = \rho_j - a_j$. Therefore, $a$ cannot be an optimal.
\end{proof}

\section{Proofs in Section~\ref{sec:algorithm}}\label{sec:appendix-examples}

Proposition~\ref{prop:examples} presents the conjugate functions $r^*$, the corresponding domain $\cD$, and optimal actions $a^*(-\mu) \in \arg\max_{a\le \rho}\{r(a)+\mu^\top a\}$ for each example stated in Section \ref{sec:ex}.

\begin{prop}\label{prop:examples}
	The following hold: %\red{SB: need to update}
	\begin{itemize}
		\item \textbf{Example \ref{ex:no_regularizer}}: If $r(a) = 0$, then $\cD=\RR^m_+$ and, for $\mu\in\cD$, $r^*(-\mu) = \mu^\top \rho$ and $a^*(-\mu) = \rho$.
		
		\item \textbf{Example \ref{ex:minimal_cons2}}: If $r(a) = \lambda\min_{j} (a_j/\rho_j)$, then $\cD=\big\{\mu \in \mathbb R^m \mid \sum_{j \in S} \rho_j \mu_j \ge -\lambda \ \forall S \subseteq [m]\big\}$, and, for $\mu \in\cD$,  $r^*(-\mu)= \rho^\top \mu+\lambda$ and $a^*(-\mu) = \rho$.
		
		% \item \textbf{Example \ref{ex:proportional_cons}}: If $r(a) = \lambda \left( \prod_{j=1}^m (a_j/\rho_j)\right)^{1/m}$, then $\cD=\left\{\mu \in \mathbb R^m \mid \right\}$, and, for $\mu \in\cD$,  $r^*(-\mu)= $ and $a^*(-\mu) = $.
		
		\item \textbf{Example \ref{ex:minimal_cons}}: If $r(a) =\lambda \pran{1-\max_{j} (a_j/\rho_j)}$, then $\cD=\big\{\mu \ge 0 \mid \sum_{j=1}^m \rho_j \mu_j \ge \lambda\big\}$, and, for $\mu \in\cD$,  $r^*(-\mu)= \rho^\top \mu-\lambda$ and $a^*(-\mu) = \rho$.        
		
		\item \textbf{Example \ref{ex:hinge_loss}}:
		If $r(a)=\sum_{j=1}^m c_j \min(\rho_j-a_j, \rho_j-t_j)$, then $\cD=\mathbb R_+^m$ and, for $\mu \in \cD$, $r^*(-\mu) =  \mu^\top t + \sum_{j=1}^m (\rho_j - t_j) \max(\mu_j, c_j)$ and $a_j^*(-\mu) = t_j$ if $\mu_j \in [0, c_j)$ and $a_j^*(-\mu) = \rho_j$ if $\mu_j\ge c_j$.
		 
		\item \textbf{Example \ref{ex:hinge_loss2}}: If $r(a)=\sum_{j=1}^m c_j \min(a_j, t_j)$, then $\cD=\left\{\mu \in \RR^m \mid \mu \ge -c\right\}$ and, for $\mu \in \cD$, $r^*(-\mu) = c^\top t + \mu^\top t + \sum_{j=1}^m (\rho_j - t_j) \max(\mu_j, 0)$ and $a_j^*(-\mu) = t_j$ if $\mu_j \in [-c_j, 0)$ and $a_j^*(-\mu) = \rho_j$ if $\mu_j\ge0$.
		
	\end{itemize}
\end{prop}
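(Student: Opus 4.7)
The approach is case-by-case direct computation of $r^*(-\mu)=\sup_{a\le \rho}\{r(a)+\mu^\top a\}$: identify the $\mu$'s for which the supremum is finite (that is $\cD$), then read off the maximizer $a^*(-\mu)$ and the value $r^*(-\mu)$. Example \ref{ex:no_regularizer} reduces to $\sup_{a\le\rho}\mu^\top a$; any negative coordinate $\mu_j<0$ allows $a_j\to-\infty$ and forces the supremum to $+\infty$, so $\cD=\RR_+^m$, and on $\cD$ the maximizer is $a=\rho$ with value $\rho^\top\mu$.

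Examples \ref{ex:minimal_cons2} and \ref{ex:minimal_cons} I would handle via an epigraphical reformulation. For Example \ref{ex:minimal_cons2}, introduce $s$ with $s\le a_j/\rho_j$ for all $j$ so that $r(a)=\lambda s$ at optimum, and parameterize $a_j=\rho_j s+u_j$ with $u_j\ge 0$ and $u_j\le\rho_j(1-s)$ (to enforce $a_j\le\rho_j$). The problem becomes $\sup_{s\le 1,\,u\ge 0,\,u_j\le\rho_j(1-s)}\{(\lambda+\rho^\top\mu)s+\mu^\top u\}$. The inner sup over $u$ is coordinate-separable and evaluates to $(1-s)\sum_{j:\mu_j>0}\rho_j\mu_j$, leaving an affine function in $s$ with slope $\lambda+\sum_{j:\mu_j<0}\rho_j\mu_j$; since $s$ is otherwise unbounded below, finiteness requires this slope to be non-negative, and the maximizer is then $s=1$, $u=0$, i.e.\ $a=\rho$, with value $\rho^\top\mu+\lambda$. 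The scalar condition $\sum_{j:\mu_j<0}\rho_j\mu_j\ge -\lambda$ is equivalent to the stated family $\sum_{j\in S}\rho_j\mu_j\ge -\lambda$ for every $S\subseteq[m]$ because the tightest subset is $S=\{j:\mu_j<0\}$. Example \ref{ex:minimal_cons} follows by an analogous epigraph ($s\ge a_j/\rho_j$): $\mu\ge 0$ is forced because any negative $\mu_j$ allows $a_j\to-\infty$ without activating the $\max$ penalty, and finiteness in $s$ becomes equivalent to $\rho^\top\mu\ge\lambda$, with optimum at $a=\rho$.

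Examples \ref{ex:hinge_loss} and \ref{ex:hinge_loss2} are separable across coordinates, so $r^*(-\mu)$ decomposes into $m$ one-dimensional sups of piecewise-linear functions of $a_j$ with a single breakpoint at $a_j=t_j$. I would maximize on each linear piece; finiteness excludes parameters for which the unpenalized piece has a negative slope that would push $a_j\to-\infty$, yielding $\mu_j\ge 0$ for Example \ref{ex:hinge_loss} and $\mu_j\ge -c_j$ for Example \ref{ex:hinge_loss2}, and reduces the optimal $a_j$ to one of $\{t_j,\rho_j\}$, selected by comparing $\mu_j$ with $c_j$ or with $0$ as appropriate. The closed-form values for $r^*$ then follow by direct algebra on the two pieces.

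The principal obstacle is Example \ref{ex:minimal_cons2}: the stated description of $\cD$ involves exponentially many inequalities, and collapsing them into the single worst-case inequality indexed by the negative-coordinate support is the non-trivial step. The epigraphical reformulation above is what enables this collapse cleanly, because it separates the contributions of positive and negative components of $\mu$ at exactly the right stage of the calculation. The other cases are routine once the appropriate decomposition (coordinate separability for the hinge losses, the level-set epigraph for the load-balancing regularizer) is in place.
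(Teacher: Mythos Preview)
Your proposal is correct. For Examples~\ref{ex:no_regularizer}, \ref{ex:hinge_loss}, and \ref{ex:hinge_loss2} it is essentially the paper's proof: the paper also exploits coordinate separability and reduces to a one-dimensional piecewise-linear maximization (packaged there as two short lemmas). For Examples~\ref{ex:minimal_cons2} and \ref{ex:minimal_cons}, however, your route genuinely differs. The paper performs an affine change of variables $z_j=\lambda(a_j/\rho_j-1)$ (respectively $z_j=\lambda(1-a_j/\rho_j)$) to reduce $r^*(-\mu)$ to a canonical problem $\sup_{z\le 0}\{\min_j z_j+\nu^\top z\}$ (respectively $\sup_{z\ge 0}\{\min_j z_j-\nu^\top z\}$), and then proves a stand-alone lemma for each by sorting the coordinates of $z$ and using a telescoping (Abel-summation) identity. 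Your epigraphical reformulation instead introduces the level variable $s$, optimizes out the slack $u$ coordinate-wise, and reads off finiteness directly from the sign of the resulting affine slope in $s$; this immediately produces the single worst-subset inequality $\sum_{j:\mu_j<0}\rho_j\mu_j\ge-\lambda$ and makes the collapse of the exponential family of constraints transparent without any sorting argument. The paper's approach is a bit more modular (the auxiliary lemmas are reusable), while yours is shorter and more self-contained. One small caveat on your Example~\ref{ex:minimal_cons} sketch: the sentence ``any negative $\mu_j$ allows $a_j\to-\infty$ without activating the $\max$ penalty'' tacitly assumes $m\ge 2$; when $m=1$ the $\max$ does move, but the conclusion $\mu\ge 0$ still follows from the single constraint $\rho_1\mu_1\ge\lambda$.
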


\subsection{Proof of Proposition~\ref{prop:examples}}
% \HL{Needs to change based on the new regularizer function.}

We prove Proposition \ref{prop:examples} for each example at a time.

\textbf{Example~\ref{ex:minimal_cons2}} 

Performing the change of variables $z_j = \lambda( a_j / \rho_j - 1)$ or $a_j = (z_j / \lambda + 1)\rho_j$ we obtain that
\[
r^*(-\mu) = \sup_{a \le \rho} \left\{ \lambda \min\left( \frac{a_j} {\rho_j} \right) + \mu^\top a \right\}
=  \mu^\top \rho + \lambda + \sup_{z \le 0} \left\{  \min\left( z_j \right) + \sum_{j=1}^m \frac{\mu_j \rho_j} \lambda z_j \right\}
\]
and the result follows from invoking the following lemma.

\begin{lem}\label{lemma:minimum1} Let $s(z) = \min_j z_j$ and $s^*(\mu) = \sup_{z \le 0}\{ s(z) + z^\top \mu\}$ for $\mu \in \mathbb R^m$. If $\sum_{j \in S} \mu_j \ge -1$ for all subsets $S \subseteq [m]$, then $s^*(\mu) = 0$ and $z=0$ is an optimal solution. Otherwise, $s^*(\mu) = \infty$.
\end{lem}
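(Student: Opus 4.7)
The plan is as follows. Since $z=0$ is feasible and gives objective value $0$, we always have $s^*(\mu) \ge 0$; the only work is to identify when this bound is tight versus when the supremum diverges. I would begin by substituting $y = -z \ge 0$ and using $\min_j z_j = -\|y\|_\infty$ to rewrite
\[
s^*(\mu) \;=\; -\inf_{y \ge 0}\bigl\{\|y\|_\infty + y^\top \mu\bigr\}\,,
\]
so that the question reduces to characterizing when this infimum is $0$ versus $-\infty$.

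For the divergent case, suppose some $S \subseteq [m]$ violates the hypothesis, i.e., $\sum_{j \in S}\mu_j < -1$. I would feed the ray $y = t\,\mathbf{1}_S$ (for $t \ge 0$) into the infimum, which gives $\|y\|_\infty + y^\top \mu = t\bigl(1 + \sum_{j \in S}\mu_j\bigr) \to -\infty$ as $t \to \infty$; hence $s^*(\mu) = +\infty$ in this case.

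For the tight case, assume $\sum_{j \in S}\mu_j \ge -1$ for every $S \subseteq [m]$. For an arbitrary $y \ge 0$ I would set $t := \|y\|_\infty$ and $\alpha_j := y_j/t \in [0,1]$ (with the natural convention when $t=0$) to obtain $\|y\|_\infty + y^\top \mu = t\bigl(1 + \sum_j \alpha_j \mu_j\bigr)$. Minimizing $\sum_j \alpha_j \mu_j$ over $\alpha \in [0,1]^m$ is separable and attained at $\alpha_j = \mathbf{1}\{\mu_j<0\}$, yielding value $\sum_{j \in S^-} \mu_j$ with $S^- := \{j : \mu_j < 0\}$. Applying the hypothesis at $S = S^-$ then gives $1 + \sum_j \alpha_j \mu_j \ge 0$ for every $\alpha \in [0,1]^m$, so the infimum is $\ge 0$ and therefore equals $0$, with $y=0$ (i.e., $z=0$) optimal.

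I do not anticipate a serious obstacle: the key conceptual step is the scaling trick that decouples the magnitude $t = \|y\|_\infty$ from the direction $\alpha \in [0,1]^m$, turning an infimum on an unbounded cone into a separable minimization on a box. Once that is in place, both the divergence and the tightness conclusions drop out immediately from the subset-sum hypothesis, and the equivalence of the condition with the single inequality $\sum_{j \in S^-}\mu_j \ge -1$ explains why testing all subsets is the right criterion.
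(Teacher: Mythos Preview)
Your proposal is correct. The divergent case is essentially identical to the paper's: both plug in the ray $z = -t\,\mathbf{1}_S$ (your $y = t\,\mathbf{1}_S$) for a violating subset $S$ and let $t\to\infty$.

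For the tight case the two arguments diverge. The paper sorts the entries of $z$ and uses an Abel-summation identity,
\[
z_{(1)} + \sum_{j} z_{(j)}\mu_{(j)} \;=\; \sum_{j=1}^{m} \bigl(z_{(j)}-z_{(j+1)}\bigr)\Bigl(1+\sum_{i\le j}\mu_{(i)}\Bigr),
\]
so that nonpositivity follows term-by-term from the subset-sum hypothesis applied to each prefix set in the sorted order. Your route instead normalizes by $t=\|y\|_\infty$ to factor the objective as $t\bigl(1+\alpha^\top\mu\bigr)$ with $\alpha\in[0,1]^m$, then minimizes the linear part separably. This is slightly more elementary (no sorting, no telescoping) and, as you note, it makes transparent that the exponentially many constraints $\sum_{j\in S}\mu_j\ge -1$ collapse to the single constraint for $S^-=\{j:\mu_j<0\}$. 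The paper's telescoping argument, by contrast, actually invokes the hypothesis at $m$ different subsets simultaneously; your argument shows only one is needed.
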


\begin{proof}
	Let $\cD = \left\{ \mu \in \RR^m \mid \sum_{j \in S} \mu_j \ge -1 \ \forall S \subseteq [m] \right\}$. We first show that $s^*(\mu) = \infty$ for $\mu \not\in \mathcal D$. Suppose that there exists a subset $S \subseteq [m]$ such that $\sum_{j \in S} \mu_j < -1$. For $t\ge0$, consider a feasible solution with $z_j = -t$ for $j\in S$ and $z_j = 0$ otherwise. Then, because such solution is feasible and $s(z) = -t$ we obtain that $s^*(\mu) \ge s (z ) - t \sum_{j \in S} \mu_j = - t ( \sum_{j \in S} \mu_j + 1)$. Letting $t \rightarrow \infty$, we obtain that $s^*(\mu) = \infty$.
	
	We next show that $s^*(\mu) = 0$ for $\mu \in \mathcal D$. Note that $s^*(\mu) \ge 0$ because $z = 0$ is feasible and $s(0) = 0$. We next show that $s^*(\mu) \le 0$. Let $z \le 0$ be any feasible solution and assume, without loss of generality, that the vector $z$ is sorted in increasing order, i.e., $z_1 \le z_2 \le \ldots \le z_m$. Let $z_{m+1} := 0$. The objective value is
	\begin{align*}
	s(z) + z^\top \mu = z_1 + \sum_{j=1}^m z_j \mu_j
	= \sum_{j=1}^m (z_j - z_{j+1}) \left( 1 + \sum_{i = 1}^j \mu_i \right)\le 0 
	\end{align*}
	where the second equation follows from rearranging the sum and the inequality follows because $z$ is increasing and $\sum_{j \in S} \mu_j + 1 \ge 0$ for all $S \subseteq [m]$. The result follows.
\end{proof}

\textbf{Example~\ref{ex:minimal_cons}}

Performing the change of variables $z_j = \lambda( 1 - a_j / \rho_j)$ or $a_j = (1 - z_j / \lambda)\rho_j$ we obtain that
\[
r^*(-\mu) = \sup_{a \le \rho} \left\{ - \lambda \max\left( \frac{a_j} {\rho_j} \right) + \mu^\top a \right\}
=  \mu^\top \rho + \lambda + \sup_{z \ge 0} \left\{  \min\left( z_j \right) - \sum_{j=1}^m \frac{\mu_j \rho_j} \lambda z_j \right\}
\]
and the result follows from invoking the following lemma.

\begin{lem}\label{lemma:minimum2} Let $s(z) = \min_j z_j$ and $s^*(\mu) = \sup_{z \ge 0}\{ s(z) - z^\top \mu\}$ for $\mu \in \mathbb R^m$. If $\sum_{j=1}^m \mu_j \ge 1$ and $\mu_j \ge 0$ for all $j \in [m]$, then $s^*(\mu) = 0$ and $z=0$ is an optimal solution. Otherwise, $s^*(\mu) = \infty$.
\end{lem}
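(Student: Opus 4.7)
The plan is to mirror the structure of the proof of Lemma~\ref{lemma:minimum1}, splitting into the unbounded and bounded cases. Let $\cD = \{\mu \in \RR^m : \mu \ge 0, \; \sum_{j=1}^m \mu_j \ge 1\}$.

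First I would show $s^*(\mu) = \infty$ whenever $\mu \notin \cD$, which splits into two sub-cases. If some coordinate $\mu_j < 0$, I would take the feasible point $z$ with $z_j = t$ and $z_k = 0$ for $k \ne j$ (assuming $m \ge 2$; the degenerate $m=1$ case reduces to $s^*(\mu_1) = \sup_{z_1 \ge 0} z_1(1-\mu_1)$ and is handled directly). Then $s(z) = 0$ while $-z^\top \mu = -t\mu_j \to \infty$ as $t \to \infty$. In the remaining sub-case $\mu \ge 0$ but $\sum_{j} \mu_j < 1$, I would take $z = t \mathbf{1}$, giving $s(z) - z^\top \mu = t(1 - \sum_j \mu_j) \to \infty$.

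Next I would show $s^*(\mu) = 0$ for $\mu \in \cD$. The inequality $s^*(\mu) \ge 0$ is immediate from $z = 0$. For the reverse direction, take an arbitrary feasible $z \ge 0$ and, without loss of generality, assume the coordinates are sorted so that $0 =: z_0 \le z_1 \le \cdots \le z_m$. Using the telescoping identity $z_j = \sum_{i=1}^j (z_i - z_{i-1})$ and swapping the order of summation, I would rewrite
\begin{equation*}
s(z) - z^\top \mu = (z_1 - z_0)\Bigl(1 - \sum_{j=1}^m \mu_j\Bigr) + \sum_{i=2}^m (z_i - z_{i-1})\Bigl(-\sum_{j=i}^m \mu_j\Bigr).
\end{equation*}
Since $z_i - z_{i-1} \ge 0$ for all $i$, since $\sum_{j=1}^m \mu_j \ge 1$, and since each tail sum $\sum_{j=i}^m \mu_j \ge 0$ by $\mu \ge 0$, each summand is non-positive. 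Thus $s(z) - z^\top \mu \le 0$, and taking supremum gives $s^*(\mu) \le 0$, so $s^*(\mu) = 0$ with $z = 0$ attaining it.

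The main obstacle is bookkeeping rather than depth: sorting the coordinates and correctly telescoping so that the constraint $\sum_j \mu_j \ge 1$ (the dual of the ``minimum'' coordinate) appears precisely on the first term while the non-negativity of $\mu$ handles the remaining telescoped contributions. Once this algebraic decomposition is set up, non-positivity is immediate, and the divergence cases are standard feasibility-direction arguments.
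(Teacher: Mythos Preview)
Your proof is correct and follows essentially the same route as the paper: the same case split on $\cD$, the same divergent directions $z=te_j$ and $z=t\mathbf 1$ for the unbounded cases, and the same idea of sorting $z$ and showing $z_1-\sum_j z_j\mu_j\le 0$ for the bounded case. The only difference is cosmetic: where you telescope $z_j=\sum_{i\le j}(z_i-z_{i-1})$ and argue term by term, the paper observes directly that $\sum_j z_j\mu_j\ge z_1\sum_j\mu_j$ (since $z_j\ge z_1$ and $\mu_j\ge0$), giving $s(z)-z^\top\mu\le z_1(1-\sum_j\mu_j)\le 0$ in one line.
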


\begin{proof}
	Let $\cD = \left\{ \mu \in \RR^m \mid \mu_j \ge 0 \text{ and } \sum_{j=1}^m \mu_j \ge 1 \right\}$. We first show that $s^*(\mu) = \infty$ for $\mu \not\in \mathcal D$. First, suppose $\mu_j < 0$ for some $j$. Consider the feasible solution $z = t e_j$ for $t\ge 0$, where $e_j$ is the unit vector with a one in component $j$ and zero otherwise. Then, because such solution is feasible and $s(t e_j) = 0$ we obtain that $ s^*(\mu) \ge s(t e_j) - t \mu_j = -t \mu_j$. Letting $t \rightarrow \infty$, we obtain that $s^*(\mu) = \infty$. Second, suppose that $\sum_{j=1}^m \mu_j < 1$. For $t\ge0$, consider a feasible solution with $z = t e$ where $e$ is the all-one vector. Then, because such solution is feasible and $s(z) =  t$ we obtain that $s^*(\mu) \ge s (z ) - t \sum_{j=1}^m \mu_j = t ( 1 -  \sum_{j=1}^m \mu_j)$. Letting $t \rightarrow \infty$, we obtain that $s^*(\mu) = \infty$.
	
	We next show that $s^*(\mu) = 0$ for $\mu \in \mathcal D$. Note that $s^*(\mu) \ge 0$ because $z = 0$ is feasible and $s(0) = 0$. We next show that $s^*(\mu) \le 0$. Let $z \le 0$ be any feasible solution and assume, without loss of generality, that the vector $z$ is sorted in increasing order, i.e., $z_1 \le z_2 \le \ldots \le z_m$. The objective value is
	\begin{align*}
	s(z) - z^\top \mu = z_1 - \sum_{j=1}^m z_j \mu_j
	\le z_1 \left( 1 - \sum_{j=1}^m \mu_j \right)\le 0 
	\end{align*}
	where the first inequality follows because $z_j \ge z_1$ for all $j \in [m]$ since $z$ is increasing and $\mu_j \ge 0$, and the last inequality follows because $z_1 \ge 0$ and $\sum_{j=1}^m \mu_j \ge 1$. The result follows.
\end{proof}

\textbf{Example~\ref{ex:hinge_loss}}

Notice that
\begin{align*}
    r(a)&=\sum_{j=1}^m c_j \min(\rho_j-a_j, \rho_j-t_j) \\
    &=\sum_{j=1}^m -c_j \max(a_j-\rho_j, t_j-\rho_j) \\
    &=\sum_{j=1}^m c_j (\rho_j-t_j) - c_j \max(a_j-t_j,0) \ .
\end{align*}

% We have $r(a)=-\sum_{j=1}^m c_j \max(a_j - t_j, 0)$ be a hinge loss function with thresholds $t_j \in [0,\rho_j]$ and penalties $c_j$. 
Because the conjugate of the sum of independent functions is the sum of the conjugates, we have by Lemma~\ref{lemma:hinge1} that $r^*(-\mu) =  \sum_{j=1}^m \mu^\top t + \sum_{j=1}^m (\rho_j - t_j) \max(\mu_j, c_j)$ for $\mu \ge 0$ and $r^*(-\mu) = \infty$ otherwise.

\begin{lem}\label{lemma:hinge1} Let $r(a) = - c \max(a - t, 0)$ for $a \in \mathbb R$, $c \ge 0$ and $t \in [0,\rho]$. Let $r^*(-\mu) = \sup_{a \le \rho}\{ r(a) + a \mu\}$. Then, $r^*(-\mu) = \mu  t + (\rho - t) \max(\mu - c, 0)$ for $\mu \ge 0$, and $r^*(-\mu) = \infty$ for $\mu < 0$. Moreover, for $\mu \in [0,c]$, $a = t$ is an optimal solution; while, for $\mu \ge c$, $a = \rho$ is an optimal solution.
\end{lem}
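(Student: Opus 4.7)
My plan is to exploit the piecewise-linear structure of $r(a) = -c\max(a - t, 0)$ and evaluate the one-dimensional supremum $\sup_{a\le\rho}\{r(a) + a\mu\}$ case by case in $\mu$. Since $r$ has a single kink at $a = t$ and is identically zero on $(-\infty, t]$, the whole computation reduces to comparing the affine function $a\mu$ on $(-\infty, t]$ with the affine function $(\mu - c)a + ct$ on $[t, \rho]$, and deciding, according to the sign of $\mu$ and of $\mu - c$, where each piece is maximized.

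First I will dispose of the case $\mu < 0$. Letting $a \to -\infty$ along the feasible ray, the objective becomes $a\mu \to +\infty$ because $\mu < 0$ and $r(a) = 0$ on that ray (recall $t\ge 0$, so $a\le 0\le t$ is feasible and lies in the flat region of $r$). Hence $r^*(-\mu) = +\infty$, which yields the unboundedness claim.

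Next, for $\mu \ge 0$, I will maximize over the two affine pieces separately. On $a \le t$ the objective is $a\mu$, which is non-decreasing in $a$ since $\mu \ge 0$, so this piece is maximized at $a = t$ with value $\mu t$. On $a \in [t,\rho]$ the objective is affine with slope $\mu - c$: when $0 \le \mu \le c$ it is non-increasing and is maximized at $a = t$ with value $\mu t$, so the overall optimizer is $a = t$ and the value is $\mu t$; when $\mu \ge c$ it is non-decreasing and is maximized at $a = \rho$ with value $(\mu - c)\rho + ct = \mu t + (\rho - t)(\mu - c)$, which dominates $\mu t$, so $a = \rho$ is the overall optimizer and the value becomes $\mu t + (\rho - t)(\mu - c)$. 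Unifying both subcases via $(\rho - t)\max(\mu - c, 0)$ gives the closed-form $r^*(-\mu) = \mu t + (\rho - t)\max(\mu - c, 0)$ for all $\mu \ge 0$ together with the two stated optimizers.

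There is essentially no technical obstacle: the whole argument is a piecewise-linear case analysis on the real line. The only minor bookkeeping is to confirm that $a = t$ is a common maximizer of both pieces for $\mu \in [0, c]$, which follows from continuity of the objective at the kink, and that the piecewise maximum over $[t, \rho]$ dominates the maximum over $(-\infty, t]$ whenever $\mu \ge c$, which is immediate from $(\rho - t)(\mu - c) \ge 0$. No further ingredients are needed.
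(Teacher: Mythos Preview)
Your proposal is correct and follows essentially the same approach as the paper: a piecewise-linear case analysis on the sign of $\mu$ and of $\mu-c$, using that the objective is affine on $(-\infty,t]$ and on $[t,\rho]$. The only cosmetic difference is that the paper first performs the change of variables $z=a-t$ and analyzes $s(\mu)=\inf_{z\le\rho-t}\{c\max(z,0)-\mu z\}$, whereas you work directly with $a$; the case split and the resulting computations are identical.
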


\begin{proof}
	We can rewrite the conjugate as
	\[
	r^*(-\mu) = \sup_{a \le \rho}\left\{ - c \max(a - t, 0) + a \mu \right\}
	= \mu t - \inf_{z \le \rho - t} \left\{ c \max(z,0) - \mu z\right\} = \mu t - s(\mu)
	\]
	where the second equation follows by performing the change of variables $ z = a - t$ and the last from setting $s(\mu) := \inf_{z \le \rho - t} \left\{ c \max(z,0) - \mu z\right\}$.
	
	First, suppose that $\mu < 0$. For any $z\le 0$, we have that $s(\mu) \le - \mu z$. Letting $z \rightarrow \infty$ yields that $s(\mu) = -\infty$. Second, consider $\mu \in [0, c]$. Write $s(\mu) = \inf_{z \le \rho - t} \max((c-\mu), z, -\mu z)$. The objective is increasing for $z\ge0$ and decreasing for $z \le 0$. Therefore, the optimal solution is $z = 0$ and $s(\mu) = 0$. Thirdly, for $\mu > c$ a similar argument shows that the objective is decreasing in $z$. Therefore, it is optimal to set $z = \rho - t$, which yields $s(\mu) = - (\rho - t) (\mu - c)$. The result follows from combining the last two cases.
\end{proof}

\textbf{Example \ref{ex:hinge_loss2}}

We have 
\begin{align*}
    r(a)&=\sum_{j=1}^m c_j \min(a_j, t_j) \\
    &=\sum_{j=1}^m -c_j \max(-a_j, -t_j) \\
    &= \sum_{j=1}^m c_j t_j - c_j \max(t_j-a_j, 0) \ .
\end{align*}
% We have $r(a)=-\sum_{j=1}^m c_j \max(t_j - a_j, 0)$ be a mirrored hinge loss function with thresholds $t_j \in [0,\rho_j]$ and penalties $c_j$. 

Because the conjugate of the sum of independent functions is the sum of the conjugates, we have by Lemma~\ref{lemma:hinge2} that $r^*(-\mu) = c^\top t + \mu^\top t + \sum_{j=1}^m (\rho_j - t_j) \max(\mu_j, 0)$ for $\mu \ge -c$ and $r^*(-\mu) = \infty$ otherwise.

\begin{lem}\label{lemma:hinge2} Let $r(a) = - c \max(t - a, 0)$ for $a \in \mathbb R$, $c \ge 0$ and $t \in [0,\rho]$. Let $r^*(-\mu) = \sup_{a \le \rho}\{ r(a) + a \mu\}$. Then, $r^*(-\mu) = \mu  t + (\rho - t) \max(\mu, 0)$ for $\mu \ge -c$, and $r^*(-\mu) = \infty$ otherwise. Moreover, for $\mu \in [-c,0]$, $a = t$ is an optimal solution; while, for $\mu \ge 0$, $a = \rho$ is an optimal solution.
\end{lem}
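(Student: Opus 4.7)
}
The plan is to mirror the argument used for the (upper) hinge loss in Lemma~\ref{lemma:hinge1}: reduce the problem to a one-dimensional conjugate computation by a shift that absorbs the threshold $t$, then carve the real line into three regimes for $\mu$ and optimize over $z \le \rho-t$ in each. Concretely, I would first substitute $z = a - t$ so that $a\le \rho$ becomes $z\le \rho-t$ and rewrite $-c\max(t-a,0) = c\min(a-t,0) = c\min(z,0)$, obtaining
\begin{equation*}
r^*(-\mu) \;=\; \mu\,t \;+\; \sup_{z\le \rho-t}\bigl\{c\min(z,0) + \mu z\bigr\} \;=:\; \mu\,t + s(\mu)\,.
\end{equation*}
This isolates the additive linear piece $\mu t$ (which will appear in the final formula) and reduces the work to computing $s(\mu)$.

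Next I would split on the sign regime of $\mu$. For $\mu < -c$, I expect the problem to be unbounded: plugging in $z\to -\infty$, the objective equals $z(c+\mu)\to +\infty$ because $c+\mu<0$, so $s(\mu)=+\infty$. For $\mu\in[-c,0]$, the two pieces of the integrand are both non-positive on their respective half-lines ($\mu z \le 0$ when $z\ge 0$ and $\mu \le 0$; $(c+\mu)z\le 0$ when $z<0$ and $c+\mu\ge 0$), so $z=0$ (equivalently $a=t$) attains the supremum and $s(\mu)=0$. For $\mu\ge 0$, the objective is linearly increasing in $z$ on $[0,\rho-t]$ and still non-positive for $z<0$, so the supremum is attained at $z=\rho-t$ (equivalently $a=\rho$) and $s(\mu)=(\rho-t)\mu$.

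Assembling the three cases gives $r^*(-\mu)=\mu t + (\rho-t)\max(\mu,0)$ for $\mu\ge -c$ and $r^*(-\mu)=+\infty$ otherwise, together with the stated optimizers. The only step that takes a little care is the sign bookkeeping in the substitution, since $-c\max(t-a,0) = c\min(a-t,0)$ must be applied consistently; once that is correct, the case analysis is routine and in direct parallel with Lemma~\ref{lemma:hinge1}, so I do not expect a genuine obstacle beyond double-checking the regime $\mu\in[-c,0]$ where the coefficient $c+\mu$ transitions from positive to zero.
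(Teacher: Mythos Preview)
Your proposal is correct and follows essentially the same approach as the paper: the same substitution $z=a-t$ to isolate $\mu t + s(\mu)$, and the same three-regime case analysis on $\mu$ with the same optimizers in each regime. The only cosmetic difference is that the paper phrases the middle case via $\sup_{z\le\rho-t}\min((c+\mu)z,\mu z)$ and argues monotonicity on each side of $z=0$, whereas you directly check non-positivity of each linear piece; these are the same argument.
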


\begin{proof}
	We can rewrite the conjugate as
	\[
	r^*(-\mu) = \sup_{a \le \rho}\left\{ - c \max(t - a, 0) + a \mu \right\}
	= \mu t + \sup_{z \le \rho - t} \left\{ c \min(z,0) + \mu z\right\} = \mu t + s(\mu)
	\]
	where the second equation follows by performing the change of variables $ z = a - t$ and the last from setting $s(\mu) := \sup_{z \le \rho - t} \left\{ c \min(z,0) + \mu z\right\}$.
	
	First, suppose that $\mu < -c$. For any $z\le 0$, we have that $s(\mu) \ge (\mu + c) z$. Letting $z \rightarrow -\infty$ yields that $s(\mu) = \infty$. Second, consider $\mu \in [-c, 0]$. Write $s(\mu) = \sup_{z \le \rho - t} \min((c+\mu) z, \mu z)$. The objective is decreasing for $z\ge0$ and increasing for $z \le 0$. Therefore, the optimal solution is $z = 0$ and $s(\mu) = 0$. Thirdly, for $\mu > 0$ a similar argument shows that the objective is increasing in $z$. Therefore, it is optimal to set $z = \rho - t$, which yields $s(\mu) = (\rho - t) \mu$. The result follows from combining the last two cases.
\end{proof}

\section{Proofs in Section \ref{sec:performance}}
\subsection{Proof of Theorem \ref{thm:master}}
{Theorem \ref{thm:master} is an extension of Theorem 1 in \cite{balseiro2020best} to include the concave non-separable regularizer $r$. The proof of Theorem 1 follows a similar proof structure of \cite{balseiro2020best}. In order to deal with the regularizer, the major differences of our proofs compared with that of \cite{balseiro2020best} include: (i) we involve the regularizer $r$ and its conjugate $r^*$, and utilize them throughout the proof; (ii) the choice of the reference value $\hat{\mu}$ (see step 2 in the proof for more details) is chosen critically to handle the existence of the regularizer.} 

We prove the result in three steps. First, we lower bound the cumulative reward of the algorithm up to the first time that a resource is depleted in terms of the dual objective and complementary slackness. Second, we bound the complementary slackness term by picking a suitable ``pivot'' for online mirror descent. We conclude by putting it all together in step three.

\paragraph{Step 1 (Primal performance.)} First, we define the stopping time $\tA$ of Algorithm~\ref{al:sg} as the first time less than $T$ that there exists resource $j$ such that 
\begin{equation*}
\sum_{t=1}^{\tA} b_t(x_t)_j + {\ubb} \ge \rho_j T \ .
\end{equation*}
Notice that $\tau_A$ is a random variable, and moreover, we will not violate the resource constraints before the stopping time $\tau_A$. We here study the primal-dual gap until the stopping time $\tA$. Notice that before the stopping time $\tA$, Algorithm \ref{al:sg} performs the standard subgradient descent steps on the dual function because $\tx_t = x_t$.

Let us denote the random variable $\gamma_t$ to be the type of request in time period $t$, i.e., $\gamma_t$ is the random variable that determines the (stochastic) sample $(f_t,b_t)$ in the $t$-th iteration of Algorithm \ref{al:sg}. We denote $\xi_t=\{\gamma_0,\ldots, \gamma_t\}$.

Consider a time $t \le \tA$ so that actions are not constrained by resources. Because $x_{t} = \arg\max_{x\in\cX_t}\{f_{t}(x)-\mu_{t}^{\top} b_{t} (x)\}$, we have that
\[
f_t(x_t) = f_t^*(\mu_t) + \mu_{t}^{\top} b_{t}  (x_t)\,.
\]
Similarly, because $a_t = \arg\max_{a\le \rho}\{r(a)+\mu_t^\top a\}$, we have that
\[
r(a_t) = r^*(-\mu_t) - \mu_t^\top a_t\,.
\]
{Let $\bar D(\mu | \cP) = \frac 1 T \mathbb E_{\gamma_t \sim \cP}\left[D(\mu | \vgamma)\right] = \EE_{(f,b) \sim \cP}\left[f^*(\mu_t)\right] + r^*(-\mu_t)$ be the expected dual objective at $\mu$ when requests are drawn i.i.d.~from $\cP \in \Delta(\cS)$.} Adding the last two equations and taking expectations conditional on $\sigma(\xi_{t-1})$ we obtain, because $\mu_t \in \sigma(\xi_{t-1})$ and $(f_t,b_t) \sim \cP$, that
\begin{align}\label{eq:bound_one_period}
&\mathbb E\left[ f_t(x_t) + r(a_t) | \sigma(\xi_{t-1}) \right] \nonumber \\
=&\EE_{(f,b) \sim \cP}\left[f^*(\mu_t)\right] + r^*(-\mu_t) + \mu_t^\top \left( \mathbb E\left[ b_{t} (x_t)| \sigma(\xi_{t-1}) \right] - a_t \right)    \nonumber\\
= &  \bar D(\mu_t| \cP) - \mathbb E\left[ \mu_t^\top \left( a_t - b_{t} (x_t) \right) | \sigma(\xi_{t-1}) \right]
\end{align}
where the second equality follows the definition of the dual function. 

Consider the process $Z_t = \sum_{s=1}^t \mu_s^\top \left(a_s - b_{s} x_s\right) - \mathbb E\left[ \mu_s^\top \left(a_s - b_{s} x_s \right) | \sigma(\xi_{s-1}) \right]$, which is martingale with respect to $\xi_t$ (i.e., $Z_t \in \sigma(\xi_t)$ and $\EE[Z_{t+1} | \sigma(\xi_t)] = Z_t$). Since $\tA$ is a stopping time with respect to $\xi_t$ and $\tA$ is bounded, the Optional Stopping Theorem implies that $\EE\left[Z_{\tA}\right] = 0$. Therefore, 
\begin{align*}
\EE\left[\sum_{t=1}^{{\tA}} \mu_t^\top \left(a_t - b_{t} (x_t)\right) \right] 
= \EE\left[\sum_{t=1}^{{\tA}} \mathbb E\left[ \mu_t^\top \left(a_t - b_{t} (x_t)\right) | \sigma(\xi_{t-1}) \right] \right]\,.
\end{align*}
Using a similar martingale argument for $f_t(x_t) + r(a_t)$ and summing \eqref{eq:bound_one_period} from $t=1,\ldots,\tA$ we obtain that
\begin{align}\label{eq:f_and_r}
\mathbb E\left[ \sum_{t=1}^{\tA} f_t(x_t) + r(a_t) \right] 
&= 
\mathbb E\left[ \sum_{t=1}^{\tA} \bar D(\mu_t| \cP) \right ]  - \mathbb E\left[ \sum_{t=1}^{\tA} \mu_t^\top \left(  a_t - b_{t} (x_t) \right) \right] \nonumber\\   
&\ge 
\mathbb E\left[ \tA \bar D(\bmu_{\tA}| \cP) \right ] - \mathbb E\left[ \sum_{t=1}^{\tA} \mu_t^\top \left( a_t - b_{t} (x_t) \right) \right]\,.  
\end{align}
where the inequality follows from denoting $\bmu_{\tA} = \frac 1 {\tA} \sum_{t=1}^{\tA} \mu_t$ to be the average dual variable and using that the dual function is convex.

\paragraph{Step 2 (Complementary slackness).} Consider the sequence of functions
\begin{equation}\label{eq:wt}
    w_t(\mu) = \mu^\top(a_t - b_t(x_t))\,,
\end{equation}
which capture the complementary slackness at time $t$. The gradients are given $\nabla_\mu w_t(\mu) = a_t - b_t(x_t)$, {\color{black} which are bounded as follows $\|\nabla_\mu w_t(\mu) \|_{\infty} \le \|b_t(x_t)\|_{\infty} + \|a_t\|_{\infty} \le \ubb + \uba$}. Therefore, Algorithm~\ref{al:sg} applies online mirror descent to these sequence of functions $w_t(\mu)$, and we obtain from Proposition~\ref{prop:omd} that for every $\mu \in \cD$
\begin{align}\label{eq:regret_omd}
\sum_{t=1}^{\tA} w_t(\mu_t) - w_t(\mu) \le E(\tA, \mu) \le E(T,\mu)\,,
\end{align}
where $E(t,\mu) = \frac 1 {2\sigma} (\ubb + \uba)^2 \eta \cdot t + \frac{1}{\eta} V_h(\mu,\mu_0)$ is the regret of the online mirror descent algorithm after $t$ iterations, and the second inequality follows because $\tA \le T$ and the error term $E(t,\mu)$ is increasing in $t$.

We now discuss the choice of $\mu$. For $\hat \mu=\arg\max_{\mu\in\cD} \left\{ r^*(-\mu) - \mu^\top \left( \frac1 T \sum_{t=1}^{T} b_t(x_t) \right) \right\}$, we have that
\begin{align}\label{eq:choice_mu}
\frac 1 T \sum_{t=1}^T \left( r(a_t) + \hat \mu^\top a_t\right)  \le r^*(-\hat \mu) = r\left( \frac 1 T \sum_{t=1}^{T} b_t(x_t) \right) +  \hat \mu^\top \left( \frac1 T \sum_{t=1}^{T} b_t(x_t) \right)\,,
\end{align}
where the inequality follows because $r^*(-\hat\mu) = \max_{a\le \rho} \left\{ r(a) + \hat\mu^\top a \right\} \ge r(a_t) + \hat \mu^\top a_t$ because $a_t \le \rho$, and the equality is because $r^*(-\hat \mu) - \hat \mu^\top a = r(a)$ for $a = \frac 1 T \sum_{t=1}^{T} b_t(x_t)$ since $a\le\rho$ and $(r^*)^*(a) = r(a)$ since $r(a)$ is closed, concave, and proper by Assumption~\ref{ass:r2}.

We let $\mu = \hat \mu + \delta$, where $\delta \in \RR_+^m$ non-negative is to be determined later. Note that $\mu \in \cD$ because the positive orthant is inside the recession cone of $\cD$ (see Lemma \ref{lem:mu_negative}). Putting these together, we bound the complementary slackness as follows
\begin{align}\label{eq:bound-cs}
\begin{split}
\sum_{t=1}^{\tA} w_t(\mu_t)
&\le \sum_{t=1}^{\tA} w_t(\mu) + E(T,\mu)\\
&= \sum_{t=1}^{T} w_t(\hat \mu) - \sum_{t=\tA+
	1}^T w_t(\hat \mu) + \sum_{t=1}^{\tA} w_t(\delta) + E(T,\mu)\\
&\le T r\left( \frac 1 T \sum_{t=1}^{T} b_t(x_t) \right) - \sum_{t=1}^T r(a_t) - \sum_{t=\tA+1}^T w_t(\hat \mu)  + \sum_{t=1}^{\tA} w_t(\delta) + E(T,\mu)\,,
\end{split}
\end{align}
where the first inequality follows from \eqref{eq:regret_omd}, the  equality follows from linearity of $w_t(\mu)$, and the second inequality from \eqref{eq:choice_mu}.

\paragraph{Step 3 (Putting it all together).}
For any $\cP\in \Delta(\cS)$ and $\tA \in [0,T]$ we have that
\begin{align}\label{eq:bound-opt}
\EE_{\gamma_t \sim \cP} \left[ \OPT(\vgamma) \right] &=  \frac{\tA}{T} \EE_{\gamma_t \sim \cP} \left[ \OPT(\vgamma) \right] + \frac{T-\tA}{T}\EE_{\gamma_t \sim \cP} \left[ \OPT(\vgamma) \right] \nonumber \\ &\le  \tA \bar D(\bmu_{\tA}| \cP) + \pran{T-\tA}{(\ubf+\ubr)}\ ,
\end{align}
where the inequality uses \eqref{eq:obtain_dual} and the fact that $\OPT(\vgamma)\le \ubf+\ubr$. Let $\Regret{A|\cP} = \EE_{\gamma_t \sim \cP} \left[ \OPT(\vgamma) - R(A|\vgamma)\right]$ be the regret under distribution $\cP$. Therefore,
\begin{align*}
\Regret{A|\cP} &= \EEcP{ \OPT(\vgamma)-R(A|\vgamma)} \\
&\le \EEcP{ \OPT(\vgamma) - \sum_{t=1}^{\tA} f_t(x_t)-T\roneT}  \\
&\le \EEcP{ \OPT(\vgamma) - \tA D(\bmu_{\tA}| \cP)  +\sum_{t=1}^{\tA} \left(w_t(\mu_t) + r(a_t)\right) -T\roneT }\\  
&\le \EEcP{ \OPT(\vgamma) - \tA D(\bmu_{\tA}| \cP)  +\sum_{t=1}^{\tA} w_t(\delta) -
	\sum_{t=\tA+1}^{T} \left(w_t(\hat \mu) + r(a_t) \right) + E(T,\mu) } \\      
&\le \EE_{\cP} \Bigg[ \underbrace{ (T - \tA) \cdot \left( \bar f + \ubr + \|\hat \mu\|_1 (\ubb + \uba) \right) + \sum_{t=1}^{\tA} w_t(\delta) +E(T, \mu)}_{\clubsuit} \Bigg] \,,
\end{align*}
where the first inequality follows from using that $\tA \le T$ together with $f_t(\cdot) \ge 0$ to drop all requests after $\tA$; the second is from \eqref{eq:f_and_r}; the third follows from because \eqref{eq:bound-cs}; and the last because from \eqref{eq:bound-opt}, and using Cauchy-Schwartz together with the triangle inequality to obtain that {\color{black} $w_t(\hat \mu) = \hat \mu^\top (a_t - b_t(x_t)) \ge - \|\hat \mu\|_1 ( \|b_t(x_t)\|_{\infty} + \|a_t\|_{\infty}) \ge - \|\hat \mu\|_1 (\ubb + \uba)$} and $r(a_t) \ge 0$. 

We now discuss the choice of $\delta \in \RR_+^m$. Let $C = \ubf + \ubr + \|\hat \mu\|_1 (\ubb + \uba)$. If $\tA = T$, then set $\delta = 0$, and we obtain that $\clubsuit \le  E(T, \hat \mu)$. If $\tA < T$, then there exists a resource $j\in[m]$ such that $\sum_{t=1}^{\tA} \btj^{\top} x_t + {\ubb} \ge T \rho_j $. Set $\delta = (C/\rho_j) e_j$ with $e_j$ being the $j$-th unit vector. This yields
\begin{align*}
\sum_{t=1}^{\tA} w_t(\delta)
&=\sum_{t=1}^{\tA} \delta^\top (a_t - b_t(x_t))
= \frac C {\rho_j} \sum_{t=1}^{\tA} \left( (a_t)_j - (b_t)_j x_t  \right)\\
&\le \frac C {\rho_j} \left( \tA \rho_j - T \rho_j + \ubb \right)
= \frac C {\rho_j} \ubb - C (T - \tA)\,,
\end{align*}
where the inequality follows because $a_t \le \rho$ and the definition of the stopping time $\tA$. Therefore,
\[
 \clubsuit \le \frac{C \ubb}{\rho_j} + E(T,\mu) \le \frac{(\bar f + \ubr + \|\hat \mu\|_1 (\ubb + \uba))\ubb}{\lbrho} + \frac 1 {2 \sigma} (\ubb + \uba)^2 \eta\cdot T + \frac{1}{\eta} V_h(\mu, \mu_0) \,,
\]
where the second inequality follows from $\rho_j \ge \lbrho$, and our formulas for $C$ and $E(T,\mu)$.

We conclude by noting that $-\hat \mu$ is a supergradient of $r(a)$ at $a = \frac 1 T \sum_{t=1}^{T} b_t(x_t)$. This follows because, for every $a'\le\rho$, $r^*(-\hat \mu) \ge r(a') + \hat \mu^\top a'$, by definition of the conjugate function, and $r^*(-\hat \mu) - \hat \mu^\top a = r(a)$ yield $r(a') \le r(a) - \hat \mu^\top(a' - a)$. Therefore, $\|\hat \mu\|_1 \le L$ because $r(a)$ is $L$-Lipschitz continuous with respect to the norm $\|\cdot\|_{\infty}$ (see, for example, Lemma 2.6 in \citealt{shalev2012online}) and, additionally, the triangle inequality implies that $\|\mu\|_1 \le \|\hat \mu\|_1 + \| \delta\|_1 \le L + C/\lbrho$ by our choice of $\delta$, thus $V_h(\mu, \mu_0)\le \sup_{\mu \in \cD: \|\mu\|_1 \le L + C/\lbrho}  V_h(\mu, \mu_0)$.

The proof follows by combining the cases for $\tA = T$ and $\tA < T$.

% {\color{blue} Done with the changes.}{\color{red} NEED TO UPDATE THIS LAST PART TO MIRROR DESCENT! ALTERNATIVELY, WE COULD WRITE THE LAST TERM AS $\sup_{\mu \in \cD: \|\mu\| \le C_4}  V_h(\mu, \mu_0)$ with $C_4 = L + C/ \lbrho \max_{i \in [m]} \|e_i\|$.
% additionally, the triangle inequality implies that $\|\mu\| \le \|\hat \mu\| + \| \delta\| \le L + C/ \lbrho \max_{i \in [m]} \|e_i\|$ by our choice of $\delta$, whereby 
% 	\[
% 	\frac{1}{2\eta}\|\mu-\mu_0\|_w^2\le \frac{1}{\eta} \pran{\|\mu\|_w^2+\|\mu_0\|_w^2}\le \frac{1}{\eta} \pran{(L + C/ \lbrho \|w\|_\infty^{1/2})^2+\|\mu_0\|_w^2}\ ,
% 	\]
% 	because for every $a,b \in \RR^m$ we have that  that $\|a-b\|_w^2= \|a\|_w^2+\|b\|_w^2- 2\langle a, b\rangle_w \le \|a\|_w^2+\|b\|_w^2+ 2\|a\|_w\|b\|_w \le 2 (\|a\|_w^2+ \|b\|^2_w)$ from Cauchy-Schwartz.
% }

%\section{Proofs in Section \ref{sec:adversarial}}
% \HL{I changed to arbitrary primal and dual norm as stated in the assumptions.}

\subsection{Proof of Theorem \ref{thm:adversial}}

We here discuss how the steps of the proof of Theorem~\ref{thm:master} need to be adapted to account for the adversarial requests. 

\textbf{Step 1 (Primal performance).}
Notice that $x_t=\arg\max_{x\in \cX_t} \{f_t(x)-\mu_t^\top b_t(x)\}$, thus we have $f_t(x_t) \ge f_t(x_t^*) - \mu_{t}^{\top} b_{t} (x_t^*) - b_{t} (x_t)$ and $0 = f_t(0) \le f_t(x_t) - \mu_t^\top b_t(x_t)$, whereby
\begin{align}\label{eq:alpha-time}
\begin{split}
    &\alpha(f_t(x_t)+r(a_t)) \\
     = & f_t(x_t)+r(a_t) + (\alpha-1)(f_t(x_t)+r(a_t)) \\
     \ge & f_t(x_t^*) + r(a^*) + (\mu_t^\top b_t(x_t) + r(a_t))-(\mu_t^\top b_t(x_t^*) + r(a^*))+ (\alpha-1)(\mu_t^\top b_t(x_t)+r(a_t)) \\
     = & f_t(x_t^*) + r(a^*) - \alpha \mu_t^\top ( a_t-b_t(x_t)) + \alpha (\mu_t^\top a_t + r(a_t)) - (\mu_t^\top b_t(x_t^*) + r(a^*)) \\
     =& f_t(x_t^*) + r(a^*) - \alpha \mu_t^\top ( a_t-b_t(x_t)) + \alpha r^*(-\mu_t) - (\mu_t^\top b_t(x_t^*) + r(a^*)) \ ,
\end{split}
\end{align}
where the last equality utilizes the definition of $r^*$ and $a_t$. Summing up \eqref{eq:alpha-time} over $t=1,\ldots,\tA$, we arrive at
\begin{align}\label{eq:sum-up}
    \begin{split}
        \alpha \sum_{t=1}^{\tau_A} (f_t(x_t)+r(a_t)) 
        \ge \sum_{t=1}^{\tau_A} f_t(x_t^*) + r(a^*) - \sum_{t=1}^{\tau_A} \alpha \mu_t^\top ( a_t-b_t(x_t)) +  \sum_{t=1}^{\tau_A} \alpha r^*(-\mu_t) - (\mu_t^\top b_t(x_t^*) + r(a^*)) \ .
    \end{split}
\end{align}
Noticing $r(a^*)\le r(0)+p^\top a^*$ and $a^*=\frac{1}{T} \sum_{t=1}^T b_t(x_t^*)$, it holds that
\begin{align}
    \begin{split}
        &\sum_{t=1}^{\tau_A} \alpha r^*(-\mu_t) - (\mu_t^\top b_t(x_t^*) + r(a^*)) \\
        \ge & \sum_{t=1}^{\tau_A} \alpha r^*(-\mu_t) - (\mu_t^\top b_t(x_t^*) + r(0) + p^\top a^*) \\
        = & \sum_{t=1}^{\tau_A} \alpha r^*(-\mu_t) - (\mu_t^\top b_t(x_t^*) +  r(0) +  p^\top b_t(x_t^*)) + \frac{T-\tau_A}{T} \sum_{t=1}^{\tau_A} p^\top b_t(x_t^*) -
        \frac{\tau_A}{T}
        \sum_{t=\tau_A+1}^{T} p^\top b_t(x_t^*)  \\
        \ge & \sum_{t=1}^{\tau_A} \alpha r^*(-\mu_t) - (\mu_t^\top b_t(x_t^*) +  r(0) +  p^\top b_t(x_t^*)) - 2\frac{T-\tau_A}{T} T\|p\|_1\ubb  \\
        % = & \sum_{t=1}^{\tau_A} \alpha r^*(-\mu_t) - (\mu_t^T b_t x_t^* +  r(0) +  p^T b_t x_t^*) - (T-\tA)\|p\| \ubb \color{red}\text{Are we missing some constants here?}\\
        \ge & - 2(T-\tA)\|p\|_1 \ubb \ ,
    \end{split}
\end{align}
where the second inequality utilizes $|p^\top b_t(x_t^*)|\le \|p\|_1\ubb$  and the last inequality utilizes \eqref{eq:adv_condition}. Substituting the above into \eqref{eq:sum-up}, we obtain
\begin{equation}\label{eq:target}
    \alpha \sum_{t=1}^{\tau_A} (f_t(x_t)+r(a_t)) 
        \ge \sum_{t=1}^{\tau_A} f_t(x_t^*) + r(a^*) - \alpha \sum_{t=1}^{\tau_A}  \mu_t^\top ( a_t-b_t(x_t)) - 2(T-\tA)\|p\|_1\ubb \ .
\end{equation}

\paragraph{Step 2 (Complementary slackness).} This step applies directly because the analysis is deterministic in nature.
In particular, it holds for $\hat \mu=\arg\max_{\mu\in\cD} \left\{ r^*(-\mu) - \mu^\top \left( \frac1 T \sum_{t=1}^{T} b_t(x_t) \right) \right\}$ and $\mu = \hat \mu + \delta$ with any $\delta\ge 0$ that
\begin{align}\label{eq:bound-cs-2}
\begin{split}
\sum_{t=1}^{\tA} w_t(\mu_t)
\le T r\left( \frac 1 T \sum_{t=1}^{T} b_t(x_t) \right) - \sum_{t=1}^T r(a_t) - \sum_{t=\tA+1}^T w_t(\hat \mu)  + \sum_{t=1}^{\tA} w_t(\delta) + E(T,\mu)\,,
\end{split}
\end{align}
where $w_t(\mu)$ is defined in \eqref{eq:wt} and $E(t,\mu)$ is the regret of the online algorithm as specified in \eqref{eq:regret_omd}.

% follows from \eqref{eq:regret_omd} that $$- \sum_{t=1}^{\tau_A} \alpha \mu_t^T ( a_t-b_t(x_t))\le \alpha E(T, \mu)$$. 

\paragraph{Step 3 (Putting it all together).} Choosing $\mu = \hat \mu + \delta$ with $\hat \mu$ as given in \eqref{eq:choice_mu} and $\delta \ge 0$ gives
\begin{align*}
    & \OPT(\vec \gamma) - \alpha R(A | \vec \gamma) \\
    \le & \sum_{t=1}^{T} f_t(x_t^*) + T r(a^*) - \alpha \sum_{t=1}^{\tA} f_t(x_t) - \alpha T \roneT\\
    \le & \sum_{t=\tA + 1}^{T} f_t(x_t^*) + (T-\tA) r(a^*) +  \alpha \sum_{t=1}^{\tA} \pran{r(a_t)+ w_t(\mu_t)} - \alpha T \roneT + 2(T-\tA) \|p\|_1\ubb \\
    \le & \sum_{t=\tA + 1}^{T} f_t(x_t^*) + (T-\tA) r(a^*) - \alpha \sum_{t=\tA + 1}^{T} \left(w_t(\hmu)  + r(a_t) \right) + \alpha \sum_{t=1}^{\tA} w_t(\delta) + \alpha E(T, \mu)
     + 2(T-\tA)\|p\|_1 \ubb \\
    % &\quad  \le \sum_{t=\tA + 1}^{T} f_t(x_t^*) + (T-\tA) r(a^*) - \alpha \sum_{t=1}^{\tA} \delta^\top (b_t(x_t) - a_t) + \alpha \sum_{t=\tA + 1}^{T} \left(\hat \mu^\top (b_t(x_t) - a_t)  + r(a_t) \right) + \alpha E  \\
    \le & (T-\tA) \cdot  C - \alpha \sum_{t=1}^{\tA} \delta^\top (b_t(x_t) - a_t) + \alpha E(T,\mu)\,,
\end{align*}
where the first inequality follows because $\tA \le T$ together with $f_t(\cdot) \ge 0$, the second inequality is from \eqref{eq:target}, the third inequality utilizes \eqref{eq:bound-cs-2}, and the last inequality utilizes $f_t(x^*_t) \le \ubf$, $r(a^*)\le \ubr$, $r(a_t)\ge 0$, $w_t(\hmu)\le \|\hmu\|_1\|b_t(x_t) - a_t\|_{\infty}\le L (\ubb +\uba)$ because $\|\hmu\|_1 \le L$, and setting $C=\ubf+\ubr+\alpha L(\ubb +\uba)+ 2\|p\|_1\ubb$ similarly to the stochastic case.

If $\tA = T$, then set $\delta = 0$, and the result follows. If $\tA < T$, then there exists a resource $j\in[m]$ such that $\sum_{t=1}^{\tA} \btj^{\top} x_t + {\ubb} \ge T \rho_j$. Set $\delta = (C/(\alpha \rho_j)) e_j$ with $e_j$ the unit vector and repeat the steps of the stochastic i.i.d. case to obtain:
\[
    \OPT(\vec \gamma) - \alpha R(A | \vec \gamma) \le \frac{C \ubb}{\rho_j} + \alpha E(T,\mu) %\le \frac{\ubf+\ubr+\alpha\|\hmu\|_1(\ubb +\uba)+ 2\alpha \|p\|_1\ubb}{\ubrho} + \alpha E(T,\mu)\,.
\]
Recall that the regret bound of the mirror descent algorithm is given by $E(T,\mu) = \frac 1 {2\sigma} (\ubb + \uba)^2 \eta \cdot T + \frac{1}{\eta} V_h(\mu,\mu_0)$. The result follows by a similar analysis to the stochastic case because $\|\mu\|_1 \le \|\hat \mu\|_1 + \| \delta\|_1 \le L + C/(\alpha\lbrho)$ by our choice of $\delta$ together with $V_h(\mu, \mu_0)\le \sup_{\mu \in \cD: \|\mu\|_1 \le L + C/(\alpha \lbrho)}  V_h(\mu, \mu_0)$. \qed% from dividing by $T$, letting $T\rightarrow \infty$, and using that $\lim_{T\rightarrow\infty} E/T = 0$. \qed

\subsection{Competitive Ratios for the Examples}\label{sec:cr-examples}

The next corollary applies Theorem~\ref{thm:adversial} to the regularizer examples stated in Section~\ref{sec:ex}.

\begin{cor}\label{cor:competitive-ratio}The following hold:

% \blue{Changes $\alpha_0=\sup_{(f, b, \cX) \in \cS} \sup_{j \in [m], x\in\cX} b_j(x) / \rho_j$}

\begin{itemize}
    \item \textbf{Example \ref{ex:no_regularizer} (No Regularizer):} If $r(a) = 0$, then Algorithm~\ref{al:sg} is $\max\{\alpha_0,1\}$-competitive with $\alpha_0 = \sup_{(f, b, \cX) \in \cS} \sup_{j \in [m], x\in\cX} b_j(x) / \rho_j $. 
    
    \item \textbf{Example \ref{ex:minimal_cons} (Load Balancing):} If $r(a) = \lambda\min_{j \in [m]} \big((\rho_j - a_j)/\rho_j\big)$, then Algorithm~\ref{al:sg} is $(\alpha_0+1)$-competitive.
    
    \item \textbf{Example \ref{ex:hinge_loss} (Below-Target Consumption):} If  $r(a)=\sum_{j=1}^m c_j \min(\rho_j-a_j, \rho_j-t_j)$, then Algorithm~\ref{al:sg} is $\alpha$-competitive with $\alpha  = \max\{\sup_{(f, b, \cX) \in \cS} \sup_{j \in [m], x\in\cX} b_j(x) / t_j, 1\}$.
    
    \item \textbf{Example \ref{ex:hinge_loss2} (Above-Target Consumption):} If $r(a)=\sum_{j=1}^m c_j \min(a_j, t_j)$, then Algorithm~\ref{al:sg} is $\alpha$-competitive with $\alpha = \max\{\sup_{(f, b, \cX) \in \cS} \sup_{j \in [m], x\in\cX} b_j(x) / t_j, 1\}$.
\end{itemize}

\end{cor}

\begin{proof}

\textbf{Example \ref{ex:no_regularizer} (No Regularizer):} If $r(a)=0$, then $\cD$ is the positive orthant, and, for $\mu \in\cD$, $r^*(-\mu)=\rho^\top \mu$. Using that $p=0 \in \partial r(0)$ and $r(0)=0$, we obtain
$$
\sup_{x\in\cX} \left\{ (\mu + p)^\top b(x) \right\} + r(0) = \sup_{x \in \cX} \mu^\top b(x)  \le \alpha_0 \rho^\top \mu = \alpha_0 r^*(-\mu) \ ,
$$
because $b_j(x) \le \alpha_0 \rho_j$ for all $x \in \cX$ and $\mu \ge 0$. We conclude by applying Theorem~\ref{thm:adversial}.

\textbf{Example \ref{ex:minimal_cons} (Load Balancing):} If $r(a) = \lambda-\lambda\max_{j \in [m]} (a_j/\rho_j)$, then $\cD=\big\{\mu \ge 0 \mid \sum_{j=1}^m \rho_j \mu_j \ge \lambda\big\}$, and, for $\mu \in\cD$,  $r^*(-\mu)= \rho^\top \mu$. Using that $p=0 \in \partial r(0)$ and $r(0)=\lambda$, we obtain
$$
\sup_{x\in\cX} \left\{ (\mu + p)^\top b(x) \right\} + r(0) = \sup_{x\in\cX} \mu^\top b(x) + \lambda \le 
\sup_{x\in\cX}\mu^\top(b(x) + \rho) \le \alpha \rho^\top \mu = \alpha r^*(-\mu) \ ,
$$
where the first inequality uses $\lambda\le \rho^\top \mu$ since $\mu\in\cD$, and the second that $\mu \ge 0$ and $b_j(x) + \rho_j \le \alpha \rho_j$ because $\alpha = \alpha_0 + 1$. This finishes the proof by applying Theorem \ref{thm:adversial}.

\textbf{Example \ref{ex:hinge_loss} (Below-Target Consumption):} If $r(a)=\sum_{j=1}^m c_j \min(\rho_j-a_j, \rho_j-t_j)$, $\cD=\mathbb R_+^m$ and, for $\mu \in \cD$, $r^*(-\mu) =  \mu^\top t + \sum_{j=1}^m (\rho_j - t_j) \max(\mu_j, c_j)$. Using that $p=0 \in \partial r(0)$ and $r(0)=\sum_j c_j (\rho_j-t_j)$, %Additionally, $\partial r(0)$ is the convex hull of the vectors $-(\lambda/\rho_j) e_j$. Using that every $p \in \partial r(0)$ satisfies $p \le 0$ together with $\mu \ge 0$, 
we obtain
\begin{align*}
    \sup_{x\in\cX} \left\{ (\mu + p)^\top b x \right\} + r(0)  &= \sup_{x\in\cX} \mu^\top b x + \sum_j c_j (\rho_j-t_j)
    \le  \alpha \mu^\top t + \sum_{j=1}^m (\rho_j - t_j) \max(\mu_j, c_j)\\
    \le& \alpha \pran{\mu^\top t + \sum_{j=1}^m (\rho_j - t_j) \max(\mu_j, c_j)} = \alpha r^*(-\mu)\ .
\end{align*}
where the first inequality uses that $b_j(x) \le t_j \alpha$ for all $x \in \cX$ from our choice of $\alpha$ together with $\mu \ge 0$ and $c_j \le \max(c_j, \mu_j)$ together with $t_j \le \rho_j$, and the last inequality because $\alpha \ge 1$. This finishes the proof by applying Theorem \ref{thm:adversial}.

\textbf{Example \ref{ex:hinge_loss2} (Above-Target Consumption):} If $r(a)=\sum_{j=1}^m c_j \min(a_j, t_j)$, then $\cD=\left\{\mu \in \RR^m \mid \mu \ge -c\right\}$ and, for $\mu \in \cD$, $r^*(-\mu) = c^\top t + \mu^\top t + \sum_{j=1}^m (\rho_j - t_j) \max(\mu_j, 0)$. Using that $p=c \in \partial r(0)$ and $r(0)=0$, 
we obtain
$$
\sup_{x\in\cX} \left\{ (\mu + p)^\top b(x) \right\} + r(0) = (\mu + c)^\top b(x) \le \alpha (\mu+c)^\top t \le \alpha r^*(-\mu) \ ,
$$
where the first inequality uses $\mu+c\ge 0$ together with $b_j(x) \le \alpha t_j$ for all $x \in \cX$ by our choice of $\alpha$, and the last inequality uses that $t_j \le \rho_j$. This finishes the proof by applying Theorem \ref{thm:adversial}.
\end{proof}

\subsection{Proof of Theorem \ref{thm:adv-fairness}}
\paragraph{Part 1 (online matching).} Let $\beta=\max_j \frac{1}{\rho_j}$ and $\mu^+=\max\{\mu, 0\}$ be the positive part of $\mu$ and $\mu^-=\min\{\mu, 0\}$ be the negative part of $\mu$, then $\mu=\mu^+ + \mu^-$. Notice that
\begin{equation}\label{eq:muxstar}
    -\mu_t^\top x_t^* \ge - (\mu_t^+)^\top x_t^* \ge -\beta (\mu_t^+)^\top a_t = -\beta \mu_t^\top a_t + \beta (\mu_t^-)^\top a_t \ge -\beta \mu_t^\top a_t - \beta \ ,
\end{equation}
where the second inequality uses $x_t\le \beta\rho$ from the definition of $\beta$ and $a_t=\rho$ and the last inequality uses $\mu_t\in \cD$ thus $(\mu_t^-)^\top a_t \ge -1$. Furthermore, it follows from $x_t=\arg\max_{x\in \cX} \{f_t(x)-\mu_t^\top x\}$ that 
\begin{equation}\label{eq:ftboundone}
    f_t(x_t)\ge f_t(x_t^*) + \mu_t^\top (x_t - x_t^*) \text{ and } f_t(x_t)\ge \mu_t^\top x_t \ ,
\end{equation}
by noticing $x_t^*\in\cX$ and $0\in \cX$. We claim that the following holds:
\begin{align*}
        (\beta+1)(f_t(x_t)+r(a_t)) 
    \ge  f_t(x_t^*) + r(a^*) + (\beta+1) \mu_t^\top (x_t - a_t)  \ ,
\end{align*}
which would immediately give rise to the $(\beta+1)$-competitive with the same proof as Theorem \ref{thm:adversial}. We prove this claim by considering two cases.

\textbf{Case 1.} We first consider the underbooked case, i.e., $\sum_j\rho_j\le 1$. Then $a_t = \rho\in\cX$, thus it follows from the $x_t=\arg\max_{x\in \cX} \{f_t(x)-\mu_t^\top x\}$ that 
\begin{equation}\label{eq:ftboundtwo}
    f_t(x_t)\ge f_t(a_t)+ \mu_t^\top(x_t-a_t)\ge \mu_t^\top(x_t-a_t)\ .
\end{equation}
{because $f_t(\cdot) \ge 0$.}
Therefore, we have
\begin{align}\label{eq:wewant}
\begin{split}
        & (\beta+1)(f_t(x_t)+r(a_t)) \\
    \ge & f_t(x_t^*) + \mu_t^\top(x_t-x_t^*) + (\beta-1) \mu_t^\top x_t + \mu_t^\top (x_t-a_t) + (\beta+1) r(a_t) \\
    \ge & f_t(x_t^*) + (\beta+1) \mu_t^\top x_t - \mu_t^\top a_t - \beta \mu_t a_t -\beta + (\beta+1) r(a_t) \\
    \ge & f_t(x_t^*) + (\beta+1) \mu_t^\top (x_t - a_t)  + r(a^*) \ ,
\end{split}
\end{align}
where the first inequality uses \eqref{eq:ftboundone} and \eqref{eq:ftboundtwo} and by writing $(\beta+1)f_t(x_t) = f_t(x_t) + f_t(x_t) + (\beta-1) f_t(x_t)$, the second inequality uses \eqref{eq:muxstar} and the third inequality is from $r(a_t)=r(\rho)=1\ge r(a^*)$.

\textbf{Case 2.} We now consider the overbooked case, i.e., $ \sum_j\rho_j\ge 1$. Let $q=\sum_j\rho_j$, then $\frac{1}{q}\rho\in\cX$, thus $f_t(x_t)\ge f_t(\frac{1}{q} a_t)+ \mu_t^\top(x_t-\frac{1}{q} a_t)\ge \mu_t^\top(x_t- \frac{1}{q} a_t)$. Furthermore, we have
$$r(a^*)=\min_j\frac{ a^*_j}{\rho_j} \le \frac{\sum_j a^*_j}{ \sum_j \rho_j}\le \frac{1}{q} \ ,$$
where the last inequality utilizes $a^*\in\cX$, thus $\sum_j a^*_j\le 1$. Therefore,
\begin{equation}\label{eq:using_q}
    f(x_t)+ r(a_t) \ge \mu_t^\top(x_t-\frac{1}{q}a_t) + r(a^*) + \left(1-\frac{1}{q}\right) \ge \mu_t^\top(x_t-a_t) + r(a^*) \ ,
\end{equation}
where the first inequality uses $r(a_t)=1\ge r(a^*)+\left(1-\frac{1}{q}\right)$ and the second inequality is from $-\mu_t^\top a_t \le 1$ {because $\mu_t \in \cD$}.
Similar to \eqref{eq:wewant}, we have
\begin{align}\label{eq:wewant2}
\begin{split}
        & (\beta+1)(f_t(x_t)+r(a_t)) \\
    \ge & f_t(x_t^*) + \mu_t^\top(x_t-x_t^*) + (\beta-1) \mu_t^\top x_t + \mu_t^\top (x_t-a_t) + \beta r(a_t) + r(a^*) \\
    \ge & f_t(x_t^*) + (\beta+1) \mu_t^\top x_t - \mu_t^\top a_t - \beta \mu_t a_t -\beta + \beta r(a_t) + r(a^*) \\
    \ge & f_t(x_t^*) + (\beta+1) \mu_t^\top (x_t - a_t)  + r(a^*) \ ,
\end{split}
\end{align}
where the first inequality uses \eqref{eq:using_q} and \eqref{eq:ftboundone} {and by writing $(\beta+1)(f_t(x_t)+r(a_t)) = f_t(x_t) + (\beta-1) f_t(x_t) + \big(f_t(x_t)+r(a_t)\big) + \beta r(a_t)$}, the second inequality uses \eqref{eq:muxstar} and the third inequality is from $r(a_t)=r(\rho)=1\ge r(a^*)$.

% \end{proof}

% \subsubsection*{Online Allocation with Minmax Fairness} Consider the allocation matching case when $r(a)=\min_{j}a_j/\rho_j$, convex constraint $\cX_t\subset \RR^d_+$. Furthermore, we assume there exists $0<\beta_1<+\infty$ such that $\rho \in \beta_1 b \cX$ for any potential request $(f, b, \cX)$, and there exists $0<\beta_2<+\infty$ such that $b x \le \beta_2 \rho$ for any $x\in\cX$ and $(f,b,\cX)$. Let $\beta=\max\{\beta_1, \beta_2\}$, then our algorithm is $\beta+1$-competitive.

% \begin{rem}
% $\beta_2$ recovers the value of $\beta$ in the previous setting. There may be more to dig into for these two parameters.
% \end{rem}

% \begin{proof}
\paragraph{Part 2 (generic online allocation).} Let $\mu^+$ be the positive part of $\mu$ and $\mu^-$ be the negative part of $\mu$, then $\mu=\mu^+ + \mu^-$. Notice that
\begin{equation}\label{eq:muxstar2}
    -\mu_t^\top b_t(x_t^*) \ge - (\mu_t^+)^\top b_t(x_t^*) \ge -\beta_2 (\mu_t^+)^\top a_t = -\beta_2 \mu_t^\top a_t + \beta_2 (\mu_t^-)^\top a_t \ge -\beta_2 \mu_t^\top a_t - \beta_2 \ge -\beta \mu_t^\top a_t - \beta \ ,
\end{equation}
where the second inequality is from $b_t x_t\le \beta_2 \rho$ by noticing the definition of $\beta_2$ and $a_t=\rho$, the third inequality uses $\mu_t\in \cD$, the last inequality uses $\mu_t\in\cD$ thus $(\mu_t^-)^\top a_t\ge -1$. Furthermore, it follows from $x_t=\arg\max_{x\in \cX_t} \{f_t(x)-\mu_t^\top x\}$ that 
\begin{equation}\label{eq:ftboundthree}
    f_t(x_t)\ge f_t(x_t^*) + \mu_t^\top b_t(x_t - x_t^*) \text{ and } f_t(x_t)\ge \mu_t^\top b_t x_t \ ,
\end{equation}
by noticing $x_t^*\in\cX_t$ and $0\in \cX_t$. As before we claim the following holds:
\begin{align*}
        (\beta+1)(f_t(x_t)+r(a_t)) 
    \ge  f_t(x_t^*) + r(a^*) + (\beta+1) \mu_t^\top (b_t x_t - a_t)  \ ,
\end{align*}
which would give rise to the $(\beta+1)$-competitive with the same proof as Theorem \ref{thm:adversial}. We prove the claim by considering two cases. 

\textbf{Case 1.} Suppose $a_t = \rho\in b_t \cX_t$. Then it holds from the $x_t=\arg\max_{x\in \cX_t} \{f_t(x)-\mu_t^\top b_t x\}$ that 
\begin{equation}\label{eq:ftboundfour}
    f_t(x_t)\ge f_t(a_t)+ \mu_t^\top(b_t x_t-a_t)\ge \mu_t^\top(b_t x_t-a_t)\ .
\end{equation}

Therefore, we have
\begin{align}\label{eq:wewant3}
\begin{split}
        & (\beta+1)(f_t(x_t)+r(a_t)) \\
    = & (\beta_2+1)(f_t(x_t)+r(a_t)) + (\beta-\beta_2)(f_t(x_t)+r(a_t)) \\
    \ge & f_t(x_t^*) + \mu_t^\top b_t (x_t-x_t^*) + (\beta_2 -1) \mu_t^\top b_t x_t + \mu_t^\top b_t (x_t-a_t) + (\beta_2+1) r(a_t) + (\beta-\beta_2)(f_t(x_t)+r(a_t)) \\
    \ge & f_t(x_t^*) + (\beta_2+1) \mu_t^\top b_t x_t - \mu_t^\top a_t - \beta_2 \mu_t a_t -\beta_2 + (\beta_2+1) r(a_t) + (\beta-\beta_2)(f_t(x_t)+r(a_t)) \\
    \ge & f_t(x_t^*) + (\beta_2+1) \mu_t^\top (b_t x_t - a_t)  + r(a^*) + (\beta-\beta_2)(f_t(x_t)+r(a_t)) \\
    \ge & f_t(x_t^*) + (\beta+1) \mu_t^\top (b_t x_t - a_t)  + r(a^*) \ ,
\end{split}
\end{align}
where the first inequality uses \eqref{eq:ftboundthree} and \eqref{eq:ftboundfour}, the second inequality uses \eqref{eq:muxstar}, the third inequality is from $r(a_t)=r(\rho)=1\ge r(a^*)$ and the last inequality uses $f_t(x_t)+r(a_t)\ge \mu_t^\top b_t x_t + 1 \ge \mu_t^\top (b_t x_t - a_t)$.

\textbf{Case 2.} Suppose $\rho\not\in b_t \cX_t$, then we know $\frac{1}{\beta}\rho\in b_t \cX_t$, thus 
\begin{equation}\label{eq:useq2}
    f_t(x_t)\ge f_t(\frac{1}{\beta} a_t)+ \mu_t^\top(b_t x_t-\frac{1}{\beta} a_t)\ge \mu_t^\top(b_t x_t- \frac{1}{\beta} a_t)\ .
\end{equation}

Therefore, we have
\begin{align}\label{eq:wewant4}
\begin{split}
        & (\beta+1)(f_t(x_t)+r(a_t)) \\
    \ge & f_t(x_t^*) + \mu_t^\top b_t(x_t-x_t^*) + \beta \mu_t^\top (b_t x_t- \frac{1}{\beta} a_t) + (\beta + 1) r(a_t)  \\
    \ge & f_t(x_t^*) + (\beta+1) \mu_t^\top b_t x_t - \mu_t^\top a_t - \beta \mu_t a_t -\beta + \beta r(a_t) + r(a^*) \\
    \ge & f_t(x_t^*) + (\beta+1) \mu_t^\top (x_t - a_t)  + r(a^*) \ ,
\end{split}
\end{align}
where the first inequality uses \eqref{eq:ftboundthree} and \eqref{eq:useq2}, the second inequality uses \eqref{eq:muxstar2}. \qed

\section{Online Mirror Descent}

We reproduce a standard result on online mirror descent for completeness.

\begin{prop}\label{prop:omd}
Consider the sequence of convex functions $w_t(\mu)$. Let $g_t \in \partial_\mu w_t(\mu_t)$ be a subgradient and
   \begin{equation*}
       \mu_{t+1} = \arg\min_{\mu\in\cD} \langle g_t, \mu \rangle + \frac{1}{\eta} V_h(\mu, \mu_t)\,.
   \end{equation*}
Suppose subgradients are bounded by $\| g_t \|_{\infty} \le G$ and the reference function $h$ is $\sigma$-strongly convex with respect to $\|\cdot\|_{1}$-norm. Then, for every $\mu \in \cD$ we have
\begin{align*}
    \sum_{t=1}^{T} w_t(\mu_t) - w_t(\mu) \le \frac{G^2 \eta}{2\sigma}  T + \frac 1 \eta V_h(\mu,\mu_0)\,.
\end{align*}
\end{prop}

\end{document}